\newcommand{\fg}{{\mathfrak{g}}}
\newcommand{\bspm}{\left(\begin{smallmatrix}}
\newcommand{\espm}{\end{smallmatrix}\right)}
\theoremstyle{plain}
      \newtheorem{theorem}{Theorem}[section]
      \newtheorem{proposition}[theorem]{Proposition}
      \newtheorem{desiderata}[theorem]{Desiderata}
      \newtheorem{lemma}[theorem]{Lemma}
      \newtheorem{corollary}[theorem]{Corollary}
      \newtheorem{conjecture}[theorem]{Conjecture}
      \newtheorem{hypothesis}[theorem]{Hypothesis}
      \theoremstyle{definition}
      \newtheorem{definition}{Definition}
      \newtheorem{remark}[theorem]{Remark}
      \newtheorem*{conjecture*}{Conjecture}
\newcommand{\pair}[1]{\langle {#1} \rangle}
\newcommand{\CC}{{\mathbb{C}}}
\DeclareMathOperator{\GL}{GL}
\DeclareMathOperator{\Sym}{Sym}
\DeclareMathOperator{\SL}{SL}
\DeclareMathOperator{\Sp}{Sp}
\DeclareMathOperator{\SO}{SO}
\newcommand{\m}{{\mathfrak{m}}}
\newcommand{\Lgroup}[1]{{\hskip-2 pt \,^L\hskip-1pt{#1}}}
\newcommand{\dualgroup}[1]{{\widehat{#1}}}
\newcommand{\g}{{\mathfrak{g}}}
\DeclareMathOperator{\rank}{rank}
\DeclareMathOperator{\codim}{codim}
\DeclareMathOperator{\Ad}{Ad}
\newcommand{\abs}[1]{{\vert #1 \vert}}
\newcommand{\ceq}{{\, :=\, }}
\newcommand{\tq}{{\ \vert\ }}
\newcommand{\iso}{{\ \cong\ }}
\DeclareMathOperator{\diag}{diag}
\renewcommand{\m}{{\mathfrak{m}}}
\newcommand{\Perv}{\mathsf{Per}}
\newcommand{\Loc}{\mathsf{Loc}}
\newcommand{\Rep}{\mathsf{Rep}}
\newcommand{\Ev}{\operatorname{\mathsf{E}\hskip-1pt\mathsf{v}}}
\newcommand{\Evs}{\operatorname{\mathsf{E}\hskip-1pt\mathsf{v}\hskip-1pt\mathsf{s}}}
\newcommand{\NEvs}{\operatorname{\mathsf{N}\hskip-1pt\mathsf{E}\hskip-1pt\mathsf{v}\hskip-1pt\mathsf{s}}}
\newcommand{\IC}{{\mathcal{I\hskip-1pt C}}}
\newcommand{\Ft}{\operatorname{\mathsf{F\hskip-1pt t}}}
\newcommand{\labitem}[2]{
\def\@itemlabel{\textbf{#1}}
\item
\def\@currentlabel{#1}\label{#2}}
\newcommand{\1}{{\mathbbm{1}}}
\newcommand{\C}{\mathbb{C}}
\newcommand{\Aut}{\text{Aut}}
\newcommand{\Hom}{\text{Hom}}
\newcommand{\Ker}{\text{Ker}}
\renewcommand{\Im}{\operatorname{Im}}
\newcommand{\CL}{{\mathcal {L}}}
\newcommand{\ABV}{{\mbox{\raisebox{1pt}{\scalebox{0.5}{$\mathrm{ABV}$}}}}}
\newcommand{\Fr}{{\rm {Fr}}}
     \newcommand{\ra}{\rightarrow}
\newcommand{\wh}{\widehat}
\newcommand{\ov}{\overline} 
\newcommand{\wt}{\widetilde} 
\newcommand{\bpm}{\begin{pmatrix}}
\newcommand{\epm}{\end{pmatrix}}
\author[C. Cunningham]{Clifton Cunningham}
\address{Department of Mathematics and Statistics, University of Calgary, 
2500 University Drive NW, 
Calgary, Alberta, 
T2N 1N4, 
Canada}
\email{clifton@automorphic.ca}
\thanks{Clifton Cunningham's research is supported by NSERC Discovery Grant RGPIN-RGPIN-2020-05220. He is grateful to the \href{www.fields.utoronto.ca}{Fields Institute for Research in Mathematical Sciences} where some of this research was conducted.}
\author[S. Dijols]{Sarah Dijols}
\address{University of British Columbia, PIMS, Office 4132, Earth Sciences Building, 2207 Main Mall, Vancouver, BC V6T 1Z4}
\email{sarah.dijols@math.ubc.ca}
\thanks{Sarah Dijols's research is supported by a Pacific Institute for Mathematical Sciences's scholarship.}
\author[A. Fiori]{Andrew Fiori}
\address{Department of Mathematics and Statistics, University of Lethbridge,
4401 University Drive,
Lethbridge, Alberta,
T1K 3M4,
Canada}
\email{andrew.fiori@uleth.ca}
\thanks{Andrew Fiori thanks and acknowledges the University of Lethbridge for their financial support as well as the support of NSERC Discovery Grant RGPIN-2020-05316.}
\author[Q. Zhang]{Qing Zhang}
\address{School of Mathematics and Statistics, Huazhong University of Science and Technology, Wuhan, 430074, China}
\email{qingzh@hust.edu.cn}
\thanks{Qing Zhang's research is supported by NSFC grant 12371010.}
\title[Generic representations, open parameter and ABV-packets]{Generic representations, open parameters and ABV-packets for $p$-adic groups}
\date{\today}                                           % Activate to display a given date or no date
\begin{document}

%\dedicatory{Dedicated to Bill Casselman, on the occasion of his 80th birthday}

\begin{abstract}
If $\pi$ is a representation of a $p$-adic group $G(F)$, and $\phi$ is its Langlands parameter, can we use the moduli space of Langlands parameters to find a geometric property of $\phi$ that will detect when $\pi$ is generic?
In this paper we show that if $G$ is classical or if we assume the Kazhdan-Lusztig hypothesis for $G$, then the answer is yes, and the property is that the orbit of $\phi$ is open. 
We also propose an adaptation of Shahidi's enhanced genericity conjecture to ABV-packets: for every Langlands parameter $\phi$ for a $p$-adic group $G(F)$, the ABV-packet $\Pi^\ABV_\phi(G(F))$ contains a generic representation if and only if the local adjoint L-function $L(s,\phi,\Ad)$ is regular at $s=1$, and show that this condition is equivalent to the "open parameter" condition above. 
We show that this genericity conjecture for ABV-packets follows from other standard conjectures and we verify its validity with the same conditions on $G$. %In this paper, we also prove that $L(s,\phi,\Ad)$ is regular at $s=1$ if and only if the orbit of the Langlands parameter $\phi$ is dense in its moduli space introduced by Vogan. 
We show that, in this case, the ABV-packet for $\phi$ coincides with its $L$-packet.
Finally, we prove Vogan's conjecture on $A$-packets for tempered parameters.
\end{abstract}

\maketitle

\section{Introduction}

In his talk at the May 2021 workshop on the Relative Trace Formula at the Centre International de Rencontres Mathématiques (CIRM), Freydoon Shahidi proposed the following {\it enhanced genericity conjecture} for quasi-split classical groups $G$ over a local field $F$:
\begin{quotation}
{\it An $A$-packet $\Pi_\psi(G)$ contains a generic representation if and only if $\psi$ is tempered.}
\end{quotation}
Shahidi's enhanced genericity conjecture is a generalization of his {\it generic packet conjecture} \cite{Shahidi: plancherel}*{Conjecture 9.4} which predicts that, for any quasi-split connected reductive algebraic group $G$ over a local field $F$, every $L$-packet of tempered representations of $G(F)$ should contain a generic representation. Shahidi's conjectures are related to the conjecture of Gross \& Prasad \cite{GP}*{Conjecture 2.6} and Rallis: for any connected reductive algebraic group $G$ over a local field $F$, an $L$-packet $\Pi_\phi(G)$ contains a generic representation of $G(F)$ if and only if $L(s,\phi,\Ad)$ is holomorphic at $s=1$. 

A purely geometric construction of $A$-packets for $p$-adic groups was proposed in \cite{Vogan:Langlands} and expanded upon in \cite{CFMMX}, as we now recall, briefly.
Given a Langlands parameter $\phi$ of $G$, the ABV-packet $\Pi_\phi^{\ABV}(G)$ of $\phi$ is constructed using microlocal vanishing cycles of perverse sheaves on the orbit of $\phi$ in the Vogan variety. It is known that $\Pi_\phi(G)\subset \Pi_\phi^{\ABV}(G)$. 
Vogan's conjecture on $A$-packets predicts that $\Pi_\phi^{\ABV}(G)=\Pi_\psi(G)$ if $\psi$ is a local Arther parameter and $\phi$ is the $L$-parameter associated with $\psi$; see \cite{Vogan:Langlands} and \cite{CFMMX}*{Conjecture 1. Section 8.2}.
Thus, the concept of ABV-packets can be viewed as a generalization of both $L$-packets and $A$-packets. It is then natural to consider the ABV-packet version of the above Shahidi's enhanced conjecture. This paper largely concerns the following claim as it pertains to $p$-adic groups.

\begin{conjecture}\label{our conjecture}
Let $G$ be a quasisplit connected reductive group over a local field $F$ and let $\phi$ be a Langlands parameter for $G$.
The ABV-packet $\Pi^\ABV_\phi(G)$ contains a generic representation if and only if $L(s,\phi,\Ad)$ is regular at $s=1$.
%the $\dualgroup{G}$-orbit of $\phi$ is open in the moduli space of Langlands parameters infinitesimal parameter conjugate to that of $\phi$. 
\end{conjecture}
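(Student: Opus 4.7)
The plan is to reduce Conjecture~\ref{our conjecture} to the main theorem of the paper characterizing generic representations by the openness of their Langlands parameter orbit, together with the inclusion $\Pi_\phi(G)\subset\Pi^{\ABV}_\phi(G)$ and a short microlocal computation for the IC sheaf of an open stratum of a Vogan variety.

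The key preliminary step is to establish the dictionary
\[
\text{$\phi$ has open orbit in its Vogan variety}\ \iff\ L(s,\phi,\Ad)\text{ is regular at }s=1.
\]
Both sides are invariants of $\Ad\circ\phi$ acting on $\hat\fg$: the codimension of the $H_\lambda$-orbit $C_\phi$ in $V_\lambda$ is read off from the centraliser of $\phi$ together with the weight grading induced by the infinitesimal parameter, while the order of pole of $L(s,\phi,\Ad)$ at $s=1$ is the multiplicity with which $|\cdot|_F^{-1}$ appears in the semisimplification of $\Ad\circ\phi$ on $\hat\fg$. The task is a direct comparison of these two integers.

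Granting this dictionary, the $(\Leftarrow)$ direction is immediate: if $L(s,\phi,\Ad)$ is regular at $s=1$ then $C_\phi$ is open in $V_\lambda$, so by the main theorem (valid for classical $G$ or under the Kazhdan--Lusztig hypothesis) $\Pi_\phi(G)$ contains a generic representation, and since $\Pi_\phi(G)\subset\Pi^{\ABV}_\phi(G)$ this produces a generic element of the ABV-packet. For $(\Rightarrow)$, let $\pi\in\Pi^{\ABV}_\phi(G)$ be generic with Langlands parameter $\phi_\pi$. By the main theorem $C_{\phi_\pi}$ is open in $V_\lambda$, so $\cs{P}(\pi)=\IC(\overline{C_{\phi_\pi}},\cs{L})$ is (a shift of) the extension of a local system from an open dense stratum, and its characteristic cycle reduces to the single component coming from the conormal to $C_{\phi_\pi}$. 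The defining condition $\pi\in\Pi^{\ABV}_\phi(G)$ forces the conormal to $C_\phi$ to contribute, so $C_\phi=C_{\phi_\pi}$; hence $\phi$ is open and the dictionary gives $L(s,\phi,\Ad)$ regular at $s=1$. As a by-product $\Pi^{\ABV}_\phi(G)=\Pi_\phi(G)$, consistent with the claim advertised in the abstract.

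The main obstacle is the dictionary of the first step: identifying the codimension of Vogan orbits with the pole order of the adjoint L-function at $s=1$ uniformly across non-tempered and reducible $\phi$. A secondary point is to check that the microlocal condition defining membership in $\Pi^{\ABV}_\phi(G)$ really forces $T^*_{C_\phi}V_\lambda$ to be an irreducible component of the characteristic cycle of $\cs{P}(\pi)$ rather than only a degenerate limit; this is routine for open strata through the $\pEvs$ construction but must be stated carefully.
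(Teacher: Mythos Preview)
Your approach is essentially the paper's: establish the dictionary \emph{open $\iff$ $L(s,\phi,\Ad)$ regular at $s=1$} (Proposition~\ref{prop: key}), then combine the characterization of generic $L$-packets by openness (Conjecture~\ref{conj: GP}/Theorem~\ref{theorem: GI}/Theorem~\ref{prop: GP}) with the microlocal fact that an IC sheaf supported on the open stratum has $\NEvs_D=0$ for every $D\neq C^o$. The paper packages the last step as Corollary~\ref{cor: generic appear in every standard module}(2) and assembles the argument as Theorem~\ref{theorem:main}; your phrasing via characteristic cycles is equivalent. Both your argument and the paper's are conditional on the same inputs (the desiderata of \S\ref{ssec:LLC} together with Conjecture~\ref{conj: GP}, known for classical groups).

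One genuine imprecision to fix in your dictionary step: the order of the pole of $L(s,\phi,\Ad)$ at $s=1$ is \emph{not} the multiplicity of $|\cdot|^{-1}$ in the semisimplification of $\Ad\circ\phi$ on $\hat\fg$. The $L$-factor is computed on $\hat\fg_N^{I_F}=\ker(\ad N)^{I_F}$, and the kernel condition is essential; dropping it gives $\dim V_\lambda^*$, which is the wrong integer. The correct identity, implicit in the proof of Proposition~\ref{prop: key} together with Lemma~\ref{lemma:dim}, is
\[
\operatorname{ord}_{s=1}L(s,\phi,\Ad)^{-1}=\dim\{\,y\in V_\lambda^*: [N,y]=0\,\}=\dim\Lambda_{C_\phi,N}=\codim_{V_\lambda}C_\phi,
\]
which is in fact the sharper statement you were aiming for. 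With this correction your sketch goes through.
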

\noindent
In Section~\ref{ssec:background} we briefly recall the definition of the ABV-packet $\Pi^\ABV_\phi(G)$ for $p$-adic fields $F$ from \cite{CFMMX} and \cite{Vogan:Langlands}; for groups over Archimedian fields $F$, we refer the reader to \cite{ABV} and \cite{Vogan:Langlands} for the definition of $\Pi^\ABV_\phi(G)$.

\subsection{Main results} \label{mainresults}

For the remainder of this paper, $F$ is a $p$-adic field and $G$ is a connected reductive algebraic group, quasisplit over $F$.

\begin{proposition}
Shahidi's enhanced genericity conjecture follow from Conjecture~\ref{our conjecture} if we admit Vogan's conjecture on $A$-packets \cite{Vogan:Langlands}. 
\end{proposition}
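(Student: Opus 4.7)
The plan is to deduce Shahidi's enhanced genericity conjecture by chaining together Vogan's conjecture on $A$-packets (which identifies an $A$-packet with a suitable ABV-packet), Conjecture~\ref{our conjecture} (which characterizes genericity in an ABV-packet via the adjoint $L$-function), and the standard fact that the adjoint $L$-function of $\phi_\psi$ detects temperedness of $\psi$.

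More concretely, let $\psi\colon W_F\times \SL_2(\CC)\times \SL_2(\CC)\to \Lgroup{G}$ be a local Arthur parameter and write $\phi_\psi$ for the associated Langlands parameter, defined by $\phi_\psi(w)=\psi(w,d_w,1)$ with $d_w=\diag(\abs{w}^{1/2},\abs{w}^{-1/2})$. Applying Vogan's conjecture on $A$-packets gives $\Pi_\psi(G)=\Pi^{\ABV}_{\phi_\psi}(G)$, so $\Pi_\psi(G)$ contains a generic representation if and only if $\Pi^{\ABV}_{\phi_\psi}(G)$ does. By Conjecture~\ref{our conjecture}, the latter is equivalent to $L(s,\phi_\psi,\Ad)$ being regular at $s=1$.

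What remains is purely a statement about Langlands/Arthur parameters: $L(s,\phi_\psi,\Ad)$ is regular at $s=1$ if and only if $\psi$ is tempered, i.e., the second (Arthur) $\SL_2$-factor of $\psi$ is trivial. When that factor is trivial, $\phi_\psi$ agrees with $\psi$ and is tempered, hence $L(s,\phi_\psi,\Ad)$ is holomorphic for $\operatorname{Re}(s)>0$. Conversely, when the Arthur $\SL_2$ acts non-trivially on $\Ad\circ\phi_\psi$, decomposing under this $\SL_2$ produces a direct summand on which $\phi_\psi(\Frob)$ has an eigenvalue of the form $q^{n/2}$ with $n\ge 1$, and one checks (using the requirement that $\psi|_{W_F}$ has bounded image) that this forces a pole of $L(s,\phi_\psi,\Ad)$ at $s=1$.

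The main obstacle is the last equivalence, which, although it is a standard fact about adjoint $L$-functions attached to Arthur parameters and appears implicitly in the formulation of the Gross--Prasad--Rallis conjecture, requires a careful unwinding of the definitions; the other two steps are then immediate. I would state this equivalence as a lemma, cite the relevant literature on adjoint $L$-factors for Arthur parameters, and then combine the three inputs to conclude.
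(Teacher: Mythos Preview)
Your overall architecture is exactly the paper's: Vogan's conjecture reduces Shahidi's conjecture to Conjecture~\ref{our conjecture} applied to $\phi_\psi$, and then one needs the equivalence ``$L(s,\phi_\psi,\Ad)$ regular at $s=1$ $\Leftrightarrow$ $\psi$ tempered''. Two remarks.

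First, a slip: your formula $\phi_\psi(w)=\psi(w,d_w,1)$ is not the right object; the Langlands parameter attached to $\psi$ is $\phi_\psi(w,x)=\psi(w,x,d_w)$ (the $d_w$ goes into the \emph{Arthur} $\SL_2$-slot, and $\phi_\psi$ has domain $W_F\times\SL_2(\CC)$).

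Second, and more substantively: the equivalence you defer to ``the relevant literature'' is in fact one of the paper's own results, proved geometrically as Proposition~\ref{prop: key} ($L$ regular at $s=1$ $\Leftrightarrow$ $\phi$ open) together with Proposition~\ref{prop: temper=arthur+open} (for Arthur-type $\phi$, open $\Leftrightarrow$ tempered), the latter via Pyasetskii duality and Jacobson--Morozov. Your direct sketch does not yet prove it: producing ``an eigenvalue of the form $q^{n/2}$ with $n\ge 1$'' on some summand is not what is needed---a pole at $s=1$ requires an eigenvalue equal to exactly $q$ on the subspace $\hat\fg_N^{I_F}$, not merely a large eigenvalue somewhere. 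The clean direct argument (which is really the paper's geometric proof unwound) is to take $y_\psi:=d\psi\bigl(0,0,\begin{smallmatrix}0&0\\1&0\end{smallmatrix}\bigr)$; since the two $\SL_2$'s commute one has $[N,y_\psi]=0$, and since $\psi(W_F)$ commutes with the Arthur $\SL_2$ one checks $\Ad(\lambda(w))y_\psi=|w|^{-1}y_\psi$, so $y_\psi\in V_\lambda^*\cap\hat\fg_N^{I_F}$ is nonzero precisely when the Arthur $\SL_2$ is nontrivial. So your approach can be completed, but it ultimately reduces to the same computation the paper packages as Propositions~\ref{prop: temper=arthur+open} and~\ref{prop: key}.
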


\noindent 
%By \emph{Vogan's conjecture on $A$-packets} we mean: if $\psi$ is an Arthur parameter of a classical group $G$, then $\Pi_\psi(G) = \Pi_{\phi_\psi}^{\ABV}(G)$; see \cite{CFMMX}*{Conjecture 1, \S8.3}.
Vogan's conjecture is proved for general linear groups in \cite{CR2}, drawing on \cite{CR}.
In this paper we prove Vogan's conjecture for tempered Langlands parameters, in Corollary \ref{vogantempered}.
 
\begin{proposition}
If $G$ is a quasi-split classical group over a $p$-adic field, then Conjecture~\ref{our conjecture} is true.% for Langlands parameters $\phi$ of Arthur type.
\end{proposition}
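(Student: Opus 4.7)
The plan is to combine three ingredients to be established elsewhere in this paper. First, a purely $L$-functional/geometric statement: $L(s,\phi,\Ad)$ is regular at $s=1$ if and only if the orbit of $\phi$ in the Vogan variety is open. This is proved by matching the pole order at $s=1$ of $L(s,\phi,\Ad)$ with the codimension of the orbit of $\phi$ in its irreducible component. Second, the classical-group genericity dichotomy: for quasi-split classical $p$-adic groups, an irreducible admissible representation $\pi$ is generic if and only if its $L$-parameter $\phi_\pi$ is an open parameter; equivalently, $\Pi_\phi(G)$ contains a generic representation if and only if $\phi$ is open. Third, the refinement that for open $\phi$ and classical $G$, $\Pi_\phi^{\ABV}(G)=\Pi_\phi(G)$, obtained by a direct computation of the microlocal vanishing-cycles functor $\Evs_\phi$ on the IC-extension of the constant sheaf from the dense orbit.

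Given these, the forward direction is immediate: if $L(s,\phi,\Ad)$ is regular at $s=1$, then $\phi$ is open by ingredient one, $\Pi_\phi(G)$ contains a generic representation by ingredient two, and the standard containment $\Pi_\phi(G)\subseteq\Pi_\phi^{\ABV}(G)$ finishes this half.

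For the reverse direction, suppose $\Pi_\phi^{\ABV}(G)$ contains a generic representation $\pi$ with $L$-parameter $\phi_\pi$, which is open by ingredient two. I would then argue that $\phi=\phi_\pi$ up to conjugacy, forcing $\phi$ to be open. The membership $\pi\in\Pi_\phi^{\ABV}(G)$ is equivalent to the non-vanishing of the microstalk $\Evs_\phi(\IC(\phi_\pi))$ in the component indexed by the generic (Whittaker) character of the microlocal component group. Since $\phi_\pi$ is open, $\IC(\phi_\pi)$ is the intermediate extension of the constant sheaf from the dense open orbit, and one shows that the Whittaker component of its microlocal vanishing cycles is concentrated on the open stratum itself, which forces $\phi$ to lie in the open orbit of $\phi_\pi$.

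The main obstacle is this last microlocal vanishing. I expect to handle it either through the explicit Kazhdan--Lusztig-type description of IC sheaves on orbit closures in the Vogan variety for classical groups (so that one can read off the irreducible constituents of $\Evs_\phi(\IC(\phi_\pi))$ and identify when the Whittaker character appears), or by reducing to the tempered situation through the Vogan conjecture for tempered parameters (Corollary~\ref{vogantempered}) combined with Aubert--Zelevinsky duality of ABV-packets, and then invoking Shahidi's original generic packet conjecture for tempered $L$-packets of classical groups.
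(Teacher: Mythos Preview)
Your three ingredients and the forward direction match the paper's argument exactly (Proposition~\ref{prop: key}, Theorem~\ref{theorem: GI}, Corollary~\ref{vogantempered}, and the containment $\Pi_\phi(G)\subset\Pi_\phi^{\ABV}(G)$). For the reverse direction the paper likewise reduces to showing that a generic $\pi$ belongs to a \emph{unique} ABV-packet, namely the one for the open orbit; so your outline is on target.

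Two corrections, however. First, the phrase ``in the component indexed by the generic (Whittaker) character of the microlocal component group'' is spurious: membership $\pi\in\Pi_\phi^{\ABV}(G)$ is simply the condition $\NEvs_{C_\phi}(\mathcal{P}_{\mathfrak w}(\pi))\neq 0$, with no particular isotypic component singled out. What you must show is that the \emph{entire} object $\NEvs_{C_\phi}(\IC(C^o,\1))$ vanishes whenever $C_\phi\neq C^o$.

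Second, and more importantly, you significantly overestimate the difficulty of that vanishing. Neither an explicit Kazhdan--Lusztig description for classical groups nor Aubert--Zelevinsky duality is required; the vanishing is elementary and holds for arbitrary $G$. Since $\pi$ is generic, Desiderata~(\ref{desi2}) gives $\mathcal{P}_{\mathfrak w}(\pi)=\IC(C^o,\1_{C^o})$. But $\overline{C^o}=V_\lambda$ is smooth (it is a vector space) and $\1_{C^o}$ extends to the constant sheaf, so $\IC(C^o,\1_{C^o})=\1_{V_\lambda}[\dim V_\lambda]$. Then $\Evs_D(\1_{V_\lambda}[\dim V_\lambda])=0$ for every $D\neq C^o$ because the characteristic cycle of a shifted local system on a smooth variety is the zero section; this is exactly Proposition~\ref{proposition:twisting}(2) applied with $S=\overline{C^o}=V_\lambda$, and it yields Corollary~\ref{cor: generic appear in every standard module}(2), which is what the paper invokes (via Theorem~\ref{theorem:main}). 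The classical-group hypothesis enters \emph{only} through Gan--Ichino's verification of Conjecture~\ref{conj: GP}, not through any microlocal computation.
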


\begin{theorem}
Let $G$ be a quasi-split connected reductive group. Assume the Local Langlands Correspondence in the form reviewed in Section~\ref{ssec:LLC}, \cite{GP}*{Conjecture 2.6} and also the $p$-adic analogue of the Kazhdan-Lusztig hypothesis. Then Conjecture~\ref{our conjecture} is true.
\end{theorem}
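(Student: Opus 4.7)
The plan is to deduce Conjecture~\ref{our conjecture} from the equivalence ``$L(s,\phi,\Ad)$ is regular at $s=1$ if and only if the orbit of $\phi$ in the Vogan variety is open'' (proved earlier in the paper), combined with the characterization, available under the $p$-adic Kazhdan--Lusztig hypothesis, that a representation $\pi$ of $G(F)$ is generic if and only if its Langlands parameter has open orbit. Both directions of Conjecture~\ref{our conjecture} then reduce to statements about how genericity interacts with the microlocal vanishing cycles functor used to construct ABV-packets.

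For the direction starting from $L$-function regularity, the Gross--Prasad--Rallis conjecture \cite{GP}*{Conjecture 2.6} produces a generic representation already inside the $L$-packet $\Pi_\phi(G)$, and the inclusion $\Pi_\phi(G)\subseteq \Pi^\ABV_\phi(G)$ from \cite{CFMMX} immediately gives a generic member of the ABV-packet.

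For the converse, I would suppose that $\pi \in \Pi^\ABV_\phi(G)$ is generic and attach its Langlands parameter $\phi_\pi$ via LLC. The genericity criterion forces the orbit $C_{\phi_\pi}$ to be open in the Vogan variety $V_\lambda$. Under the Whittaker normalization of LLC, $\pi$ corresponds to the trivial character of the component group $A_{\phi_\pi}$, hence to the trivial local system on $C_{\phi_\pi}$ under the equivariant perverse sheaf dictionary of \cite{CFMMX}. Since $V_\lambda$ is smooth and $C_{\phi_\pi}$ is open and dense in the irreducible component of $V_\lambda$ containing it, the intermediate extension is simply the shifted constant sheaf $\IC(C_{\phi_\pi},\mathbb{C})\simeq \mathbb{C}_{V_\lambda}[\dim V_\lambda]$, whose characteristic cycle is supported on the zero section of $T^*V_\lambda$. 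The microlocal vanishing cycle $\pEvs_\phi\,\IC(C_{\phi_\pi},\mathbb{C})$ therefore vanishes whenever $C_\phi\neq C_{\phi_\pi}$; since $\pi\in \Pi^\ABV_\phi(G)$ forces this microlocal vanishing cycle to be nonzero, I conclude $C_\phi = C_{\phi_\pi}$ is open, and hence $L(s,\phi,\Ad)$ is regular at $s=1$.

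The main obstacle is the identification of the generic member of the $L$-packet with the trivial local system under the KL/Vogan dictionary, which I would obtain from the Whittaker normalization of LLC together with the statement that generic constituents of $L$-packets correspond to the trivial character of the component group. As a byproduct of the microlocal step, the argument also shows $\Pi^\ABV_\phi(G)=\Pi_\phi(G)$ whenever $\phi$ has open orbit, since for any non-open orbit $C'$ the characteristic cycle of $\IC(C',\mathcal{L}')$ is supported on conormals of subvarieties of $\overline{C'}$, none of which lies over the open orbit; this matches the coincidence of packets announced in the introduction.
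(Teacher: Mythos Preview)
Your proposal is correct and follows essentially the same route as the paper. The forward direction is identical; for the converse, the paper packages your microlocal computation as Proposition~\ref{proposition:twisting} and Corollary~\ref{cor: generic appear in every standard module}: since $\overline{C^o}=V_\lambda$ is smooth, $\IC(C^o,\1)=\1_{V_\lambda}[\dim V_\lambda]$ and $\Evs_D$ of this sheaf vanishes for $D\ne C^o$, so $\pi(C^o,\1)$ lies in no other ABV-packet. The only cosmetic difference is that the paper invokes the Gross--Prasad--Rallis conjecture (together with Proposition~\ref{prop: key}) to place the generic $\pi$ over the open orbit, whereas you invoke the Kazhdan--Lusztig hypothesis via Theorem~\ref{prop: GP}; since both are among the assumptions of the theorem, either route is fine.
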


\noindent

The $p$-adic analogue of the Kazhdan-Lusztig hypothesis was introduced in \cite{Zelevinskii:KL} for $G=\GL_n$; we recall the general statement in Section~\ref{ssec:KL}. 
It seems to be commonly accepted that the $p$-adic analogue of the Kazhdan-Lusztig hypothesis is a theorem by \cite{CG}*{\S 8}; if the reader takes this point of view, then our proof of Conjecture~\ref{our conjecture} is unconditional. We exercise caution on this point, however, because we feel that it is perhaps more accurate to say that \cite{CG}*{\S 8} provides a proof of the $p$-adic analogue of the Kazhdan-Lusztig hypothesis for representations with Iwahori-fixed vectors, or more generally, for representations $\pi$ for which the Hecke algebra of the inertial class of $\pi$ is an affine Hecke algebra. It is also proven in a recent pre-print of \cite{Solleveld:pKLH}*{Theorem E, (b) and (c)}. Alternatively, Lusztig has suggested to us that the $p$-adic analogue of the Kazhdan-Lusztig hypothesis for unipotent representations is a consequence of \cite{Lusztig:Cuspidal2}*{Corollary 10.7}. This would give a proof of Conjecture~\ref{our conjecture} for unipotent representations.

In this paper we also show that, assuming the desiderata of Section \ref{ssec:LLC}, that the definition of the ABV-packet $\Pi^\ABV_\phi(G(F))$ given in \cite{CFMMX}*{Definition 1} is independent of the choice of a Whittaker datum in the local Langlands correspondence. 

\medskip
This paper is organized as follows.
 In Section \ref{ssec:LLC} we give the necessary context for the Local Langlands Correspondence.
 In Section \ref{ssec:parameters}, we recall the definitions of Vogan varieties and prove some useful geometric lemmas regarding the dimension of the conormal bundle and how the closed orbit in $V_\lambda$ is dual to the open orbit in $V_\lambda^*$, and vice-versa (Proposition \ref{lemma:open}). In Subsection \ref{ssec:KL}, we state the Kazhdan-Lusztig conjecture in the form needed. In Section \ref{ssec:geometric} we explain some geometric consequences of the Kazhdan-Lusztig conjecture and of the local Langlands Desiderata of Section \ref{ssec:LLC}. In Section \ref{sec:genL}, we prove that tempered Langlands parameters are open and of Arthur-type in Proposition \ref{prop: temper=arthur+open} using Proposition \ref{lemma:open}. We further prove that generic Langlands parameters are open in Theorem~\ref{prop: GP}, and relate it to the regularity of the adjoint $L$-function at $s=1$. In Section \ref{sec:genABV}, we give our main results, a geometric interpretation of generic representations in ABV-packets and prove the Enhanced Genericity conjecture of Shahidi, assuming both the Kazhdan-Lusztig hypothesis, the local Langlands correspondence desiderata and the fact that ABV-packets contain $A$-packets. In Section \ref{sec:discrete}, we give a more elementary proof of the fact that discrete and tempered representations have open Langlands parameters for split classical groups or any of their pure inner forms. The proof only relies on the well-known description of such parameters and basic linear algebra. Further, our proof implies that induction preserve the openness of a Langlands parameter, and more generally that if a Langlands parameter factoring through some subgroup of $\Lgroup{G}$ is open for that subgroup, it remains open after embedding it in $\Lgroup{G}$.

\subsection{Related work}

F. Shahidi's original conjecture \cite{Shahidi: plancherel}*{Conjecture 9.4} states that if $\phi$ is a tempered $L$-parameter, then the $L$-packet  $\Pi_\phi(G)$ contains a generic representation. 
This conjecture has been checked for many cases by various authors, see \cites{Konno, JL} for several examples and see \cites{Liu-Shahidi, HLL, HLLZ, MM} for relatively complete references. 
Since a tempered parameter $\phi$ indeed comes from an Arthur parameter  $\psi$ with $\phi=\phi_\psi$ and one has $\Pi_\phi(G)=\Pi_\psi(G)$, the enhanced Shahidi's conjecture, see Conjecture \ref{conj: shahidi}, is indeed a generalization of \cite{Shahidi: plancherel}*{Conjecture 9.4}.  
The enhanced conjecture, has been checked under an assumption in \cite{Liu-Shahidi}, and for split $\SO_{2n+1}$ and $\Sp_{2n}$ over $p$-adic fields unconditionally in  \cites{HLL-Shahidi, HLLZ}.
Some work in the case of Real groups appears in \cite{AAM}.

In \cite{Solleveld-open}, Solleveld conjectured that generic Langlands parameters are open (Conjecture B).
Our work proves his Conjecture B for quasisplit classical groups and their pure inner forms and also proves this conjecture for all connected reductive groups assuming the p-adic Kazhdan-Lusztig hypothesis. 
In the same paper, Solleveld also assumes that discrete (resp. tempered) parameters are open (see the paragraph below Conjecture C in \cite{Solleveld-open}). 
Our work verifies this assumption for quasisplit classical groups and their pure inner forms (resp. all connected reductive groups).

\subsection*{Acknowledgment}
We thank Maarten Solleveld for his interest in this work, and sharing his pre-print \cite{Solleveld-open} with us. 

\iffalse

\subsection{Definitions and notation}

Here we use the notation $W'_F\ceq W_F\times \SL^{\hskip-1pt\mathrm{D}}_2(\CC)$ and $W''_F\ceq W_F\times \SL^{\hskip-1pt\mathrm{D}}_2(\CC)\times \SL^{\hskip-1pt\mathrm{A}}_2(\CC)$.

\fi

\tableofcontents

\section{Local Langlands correspondence: desiderata}\label{ssec:LLC}

Let $F$ be a non-archimedean local field and $G$ be a quasi-split reductive group over $F$. In this section, we review very briefly certain basic desiderata of local Langlands correspondences in a form proposed by Vogan \cite{Vogan:Langlands}. Our exposition mainly follows \cite{GGP}*{Section 9} and \cite{Kaletha}*{Section 4}. 

Let $\Lgroup{G}=\wh G\rtimes W_F$ be the $L$-group of $G$. Recall that a local Langlands parameter $\phi$ is a continuous homomorphism $\phi: W'_F\ceq W_F\times \SL_2(\C)\to \Lgroup{G}$ such that $\phi(\Fr)$ is semi-simple, $\phi|_{\SL_2(\C)}$ is algebraic and $\phi$ commutes with the projection of $W_F\times \SL_2(\C)\to W_F$ and $\Lgroup{G}\to W_F$. Giving a local Langlands parameter $\phi$ is amount to giving a Weil-Deligne representation of $W_F$, namely, a pair $(\lambda, N)$, where $\lambda: W_F\to \Lgroup{G}$ is a group homomorphism which is continuous on $I_F$ with $\phi(\Fr)$ semi-simple , and commutes with $\Lgroup{G}\to W_F$, and $N\in \wh \fg$ is a nilpotent element such that 
$$\Ad \lambda(w)N=|w|N. $$
This correspondence is given by $\phi\mapsto (\lambda_\phi, N)$, where 
\begin{equation}\label{eq-infinitesimal}
\lambda_\phi(w)\ceq \phi(w, \diag(|w|^{1/2}, |w|^{-1/2}))
\end{equation}
is the infinitesimal parameter of $\phi$ and $N=d\phi \left(\begin{smallmatrix}0&1\\ 0&0\end{smallmatrix}\right)$. For an argument of such an equivalence, see \cite{GR}*{Proposition 2.2}. Denote by $\Phi(G)$ the set of equivalence classes of Langlands parameters of $G$. 

 Let $B$ be a Borel subgroup of $G$ with unipotent radical $U$. The torus $T=B/U$ act on $\Hom(U,\C^\times)$. Recall the following basic definition.
 
 \begin{definition}[Generic character and generic representation]\label{definition:generic representation}
     A character $\theta: U(F)\to \C^\times$ is called generic if its stabilizer in $T(F)$ is the center $Z(F)$ of $G(F)$.  
For a generic character $\theta$ of $U(F)$, a representation $\pi$ of $G(F)$ is called $\theta$-generic if $\Hom_{U(F)}(\pi,\theta)\ne 0$. This only depends on the $T(F)$-orbit of $\theta$. If $\theta$ is understood from the context, we simply say $\pi$ is generic.
\end{definition}

\noindent
The set $D$ of $T(F)$-orbits on the generic characters forms a principal homogeneous space for the abelian group 
$E'=T_{ad}(F)/\Im(T(F)),$
where $T_{ad}(F)$ is the corresponding maximal torus of the adjoint group $G_{ad}(F)$. A Whittaker datum for $G$ is a $G(F)$-conjugacy classes of pairs $(B,\theta)$, where $B$ is a Borel subgroup of $G$ and $\theta$ is a generic character of $U(F)$, where $U$ is the unipotent radical of $B$. Such a pair is a principal homogeneous space for the abelian group $E=G_{ad}(F)/\Im(G(F))$. It is known that $E=E'$. Thus if we fix a Borel subgroup $B$, giving a Whittaker datum $(B',\theta')$ is equivalent to giving an element in $D$.

Let $\wh G_{sc}$ be the simply connected cover of $\wh G$. Let $\wh Z$ (resp. $\wh Z_{sc}$) be the center of $\wh G$ (resp. $\wt Z_{sc}$). For a finite abelian group $A$, denote by $A^D$ the group of characters of $A$. By \cite{Kaletha}*{Lemma 4.1}, there exists a canonical bijection
\begin{equation}\label{eq: Kaletha 4.1}E\to (\ker(H^1(W_F, \wh Z_{sc})\to H^1(W_F, \wh Z)))^D.\end{equation}
Giving two Whittaker data $\mathfrak{w}$ and $\mathfrak{w}'$, let $(\mathfrak{w},\mathfrak{w}')\in E= G_{ad}(F)/G(F)$ be the element which conjugates $\mathfrak{w} $ to $\mathfrak{w}'$. By the above bijection \eqref{eq: Kaletha 4.1}, by abuse of notation, we view $(\mathfrak{w},\mathfrak{w}')$ as a character of $ \ker(H^1(W_F, \wh Z_{sc})\to H^1(W_F, \wh Z))$.

Given a Langlands parameter $\phi: W_F\times \SL_2(\C)\to \Lgroup{G}=\wh G\rtimes W_F$, we view $\wh G$ as a $W_F$-module via $\phi$. Namely, for $w\in W_F, g\in \wh G$, we consider the action of $W_F$ on $\wh G$ defined by $w.g=\phi(w)g\phi(w)^{-1}$. The action of $W_F$ on $\wh G$ induces an action on $\wh G_{sc}$ and restricts to an action on $\wh G_{ad}$. Note that the action of $W_F$ on $\wh Z$ (resp. on $\wh Z_{sc}$) induced by any $\phi$ is compatible with the natural action of $W_F$ on $\wh Z$ (resp. $\wh Z_{sc}$), namely the action when we define the $L$-group $\Lgroup{G}=\wh G\rtimes W_F$, see \cite{Borel:Corvallis}. Now consider the composition 
$$f: H^0_\phi(W_F, \wh G)\to H^0_\phi(W_F, \wh G_{ad})\to H^1(W_F, \wh Z_{sc}),$$
where the first map comes from the long exact sequence associated with the short exact sequence 
$$1\to Z\to \wh G\to \wh G_{ad}\to 1,$$
and the second map comes from the long exact sequence associated with the short exact sequence
$$1\to \wh Z_{sc}\to \wh G_{sc}\to \wh G_{ad}\to 1.$$
Note that the composition of $H^1(W_F, \wh Z_{sc})\to H^1(W_F, \wh Z)$ with $f$ vanishes because it is the composition
$$H^0_\phi(W_F, \wh G)\to H^0_\phi(W_F, \wh G_{ad})\to H^1(W_F, \wh Z),$$
which is part of the long exact sequence associated with the short exact sequence 
$$1\to \wh Z\to \wh G\to \wh G_{ad}\to 1.$$
Thus $f$ induces a homomorphism
$$ C_\phi=H^0_\phi(W_F, \wh G)\to H^0_\phi(W_F, \wh G_{ad})\to \Ker( H^1(W_F, \wh Z_{sc})\to H^1(W_F, \wh Z)).$$
Composing with the above homomorphism, the character $(\mathfrak w, \mathfrak w')$ can be viewed as a character of $C_\phi$. It is not hard to see that $(\mathfrak w, \mathfrak w')$ indeed defines a character of $A_\phi=\pi_0(C_\phi)$, which is trivial on $Z(\wh G)^\Gamma$. We denote this character by $\eta_{\mathfrak w, \mathfrak w'}: A_\phi\to \C^\times.$ See \cite{Kaletha}*{Section 4} for more details.

For $\delta\in H^1(F,G)$, let $G_\delta$ be the pure inner form of $G$ associated with $\delta$. Then $G=G_1$ for $1\in H^1(F,G)$. Let $\Pi(G_\delta)$ be the set of equivalence classes of irreducible smooth representations of $G_\delta(F)$ and let $\Pi^{\mathrm{pure}}(G/F)=\coprod_{\delta\in H^1(F,G)}\Pi(G_\delta)$.

 We now describe the desiderata of the local Langlands corresponds which is relevant for our purpose. 
\begin{enumerate}
\item\label{desi1} 
There is a finite to one surjective map 
$$\Pi^{\mathrm{pure}}(G/F)\to \Phi(G).$$
For each $\phi\in \Phi(G)$, let $\Pi_\phi^{\mathrm{pure}}(G/F)\subset \Pi^{\mathrm{pure}}(G/F)$ be the pre-image of $\phi$, which is the pure $L$-packet of $\phi$. Moreover, the cardinality of $\Pi_\phi(G/F)$ is $|\wh A_\phi|$, where $A_\phi=\pi_0(Z_{\dualgroup{G}}(\phi)).$

\item\label{desi2} 
For each Whittaker datum $\mathfrak{w}=\theta\in D$, there is a bijection 
\[
J(\theta): \Pi_\phi^{\mathrm{pure}}(G/F)\to \wh{A_\phi},
\]
which we refer to as a Whittaker normalization.
Following \cite{Kaletha}, we denote the inverse of $J(\theta)$ by $\iota_{\mathfrak{w}}$.
Moreover, each $\Pi_\phi(G)$ contains at most one $\theta'$-generic representation for each $\theta'\in D$. If $\Pi_\phi(G)$ contains a $\theta$-generic representation $\pi$, then $J(\theta)(\pi)$ is the trivial representation of $A_\phi$.

\item\label{wittwisting} 
Fix $\phi\in \Phi(G)$. If $\mathfrak{w}'=\theta'\in D$ is another Whittaker datum, then for any $\rho\in \wh A_{\phi}$, we have 
$$\iota_{\mathfrak{w}'}(\rho)=\iota_{\mathfrak{w}}(\rho\otimes \eta_{\mathfrak{w},\mathfrak{w'}}^{-1}),$$
where $\eta_{\mathfrak{w},\mathfrak{w}'}: \wh A_\phi\to \C^\times$ is the character of $A_\phi$ described above, or as described in \cite{Kaletha}*{Section 4}. 
\item\label{desi4}
All of the irreducible representations $\pi$ of $\Pi_\phi(G)$ have the same central character, which is denoted by $\omega_\phi$. Gross and Reeder gives a recipe to determine the central character $\omega_\phi$ in \cite{GR}.
\end{enumerate}

\begin{remark}
In the above, the notation $J(\theta)$ is taken from \cite{GGP}*{\S 9}, where a Whittaker datum is described using an element $\theta$ of $D$; while $\iota_{\mathfrak{w}}$ is taken from \cite{Kaletha}, where a Whittaker datum $\mathfrak{w}$ denotes a $G(F)$-conjugacy classes of a pair $(B,\theta)$ with a Borel $B$ and a generic character $\theta$ of $B$.  For classical groups, Desiderata~(\ref{desi1}) and (\ref{desi2}) above were proved in \cites{Arthur: book, Mok:Unitary, KMSW:Unitary}. Part (\ref{wittwisting}) of the above desiderata for tempered parameters of symplectic groups and special orthogonal  was proved in \cite{Kaletha}. For classical groups, the central character $\omega_\phi$ in (\ref{desi4}) above was explicitly described in \cite{GGP}*{\S 10}.
\end{remark}

\section{ ABV-packets} \label{ssec:parameters}

\subsection{Moduli space of Langlands parameters} 

In this section, we recall the definition of the ABV-packet $\Pi^\ABV_\phi(G)$ appearing in Conjecture~\ref{our conjecture}.

By an {\emph infinitesimal parameter} we mean a continuous group homomorphism $\lambda: W_F\to \Lgroup{G}$ such that its composition with $\Lgroup{G}\to W_F$ is the identity. 
The associated Vogan variety $V_\lambda$ is  
\[
V\ceq V_\lambda:=\{x\in \widehat {\mathfrak{g}} \tq \Ad(\lambda(w))x=|w|x, \ \forall w\in W_F\}.
\]
Note that if we fix an element $\Fr\in W_F$ such that $|\Fr|_F=q^{-1}$, then
\[
V_\lambda= \{x\in \widehat{\mathfrak{g}}^{I_F} \tq \Ad(\lambda(\Fr))x=q^{-1}x\}.
\]
We set
\[
V^* \ceq V_\lambda^*=\{y\in \widehat{\mathfrak{g}} \tq \Ad(\lambda(w))y=|w|^{-1} y, \ \forall w\in W_F\}.
\]
We use the Killing form for  $\widehat {\mathfrak{g}}$ to define a pairing $T^*(V_\lambda) = V_\lambda \times V_\lambda^* \to \mathbb{A}^1$; this allows is to identify $V_\lambda^*$ with the dual vector space to $V_\lambda$.

Let $H_\lambda$ be the centralizer of $\lambda$ in $\widehat G$, so 
\[
H \ceq H_\lambda:=\{g\in \widehat G \tq g\lambda(w)g^{-1}=\lambda(w),\ \forall w\in W_F\}.
\]
Then $H_\lambda$ acts on $V_\lambda$ and $V_\lambda^*$ in $\dualgroup{\mathfrak{g}}$ by conjugation and both $V_\lambda$ and $V_\lambda^*$ are prehomogeneous vector spaces for this action; in particular, there are finitely many $H_\lambda$-orbits in $V_\lambda$ and $V_\lambda^*$, each with a unique open orbit and a unique closed orbit by \cite{CFMMX}*{Proposition 5.6}\\

From \cite{CFMMX}*{Proposition 4.2}, there is a bijection between the set of Langlands parameters with infinitesimal parameter $\lambda$ and the $H_\lambda$-orbits in $V_\lambda$.
Indeed, by construction, each $x\in V_\lambda$ (resp. $y\in V_\lambda^*$) uniquely determines a Langlands parameter $\phi_x$ (resp. $\phi_y$) such that $\phi_x(w,\diag(|w|^{1/2},|w|^{-1/2})) = \lambda(w)$ (resp. $\phi_y(w,\diag(|w|^{-1/2},|w|^{1/2})) = \lambda(w)$) and $\phi_x(1,e) = \exp x$ (resp. $\phi_y(1,f) = \exp y$), where $e = \left( \begin{smallmatrix} 1 & 1 \\ 0 & 1 \end{smallmatrix}\right)$ (resp. $f = \left( \begin{smallmatrix} 1 & 0 \\ 1 & 1 \end{smallmatrix}\right)$). 
We will write $x_\phi \in V_\lambda$ for the point on $V_\lambda$ corresponding to $\phi$ where $\phi(w,\diag(|w|^{1/2},|w|^{-1/2})) = \lambda(w)$.
The $H_\lambda$-orbit of $x_\phi\in V_\lambda$ will be denoted by $C_\phi$. 
 
For an orbit $C$, pick any point $x\in C$ and denote $A_C=\pi_0(Z_{H_\lambda}(x))$, where $Z_{H_\lambda}(x)$ denotes the stabilizer of $x$ in $H_\lambda$ and $\pi_0$ denotes the component group. 
The isomorphism class of $A_C$ is independent of the choice of $x$ and is called the equivariant fundamental group of $C$. 
Since $C$ is connected, the choice of $x\in C$ determines an equivalence between the category $\Rep(A_C)$ of finite-dimensional $\ell$-adic representations of $A_C$ and the category $\Loc_H(C)$ of $H$-equivariant local systems on $C$.
For a local Langlands parameter $\phi:W_F\times \SL_2(\C)\to \Lgroup{G}$, denote $A_\phi=\pi_0(Z_{\widehat G}(\phi))$, which is the component group of the centralizer of $\phi$. 
A fundamental fact is $A_{C_\phi} \cong A_\phi$, see \cite{CFMMX}*{Lemma 4.6.1}. 

The conormal bundle to $V_\lambda$ is
\[
\Lambda_\lambda \ceq \{ (x,y)\in V_\lambda\times V_\lambda^* \tq [x,y]=0 \},
\]
were, $[~,~]$ is the Lie bracket in $\widehat{ \mathfrak{g}}$; see \cite{CFMMX}*{Proposition 6.3.1}.
Likewise, 
\[
\Lambda_{\lambda}^* \ceq \{ (y,x)\in V_\lambda^* \times V_\lambda \tq [x,y]=0 \}
\]
may be identified with the conormal bundle to $V_\lambda^*$.
We write $p : \Lambda \to V$ and $q : \Lambda \to V^*$ for the obvious projections and set $\Lambda_{C}\ceq p^{-1}(C)$ and $\Lambda_{D}^*\ceq q^{-1}(D)$.

\begin{lemma}\label{lemma:dim}
For every $H_\lambda$-orbit $C$ in $V_\lambda$, $\dim \Lambda_C = \dim V_\lambda$.
\end{lemma}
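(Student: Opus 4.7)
The plan is to recognize $\Lambda_C$ as a vector bundle over $C$ whose fiber has dimension $\dim V_\lambda - \dim C$, so that the total dimension is $\dim V_\lambda$.

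First I would note that $\Lambda_C \to C$ is $H_\lambda$-equivariant under the diagonal action on $V_\lambda \times V_\lambda^*$, and that $C$ is a single $H_\lambda$-orbit. Hence $\Lambda_C$ is a homogeneous vector bundle over $C$ and it suffices to compute the dimension of one fiber $\Lambda_{C,x} = \{y \in V_\lambda^* : [x,y] = 0\}$ for a chosen $x \in C$. Since $\Lambda_{C,x} = \ker\bigl(\operatorname{ad}(x) \colon V_\lambda^* \to \widehat{\mathfrak g}\bigr)$, the problem reduces to computing the rank of $\operatorname{ad}(x)$ on $V_\lambda^*$.

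Next I would observe that if $x \in V_\lambda$ has weight $+1$ and $y \in V_\lambda^*$ has weight $-1$ with respect to the action $\operatorname{Ad}(\lambda(w))$ scaled by $|w|$, then $[x,y]$ has weight $0$, hence lies in the fixed subalgebra $\widehat{\mathfrak g}^{\lambda} = \Lie(H_\lambda) =: \mathfrak h_\lambda$. So $\operatorname{ad}(x)$ restricts to a linear map
\[
\operatorname{ad}(x) \colon V_\lambda^* \longrightarrow \mathfrak h_\lambda.
\]
Using the invariance $\langle [x,y], h\rangle = \langle y, [h,x]\rangle$ of the Killing form together with the identifications $V_\lambda^* \cong V_\lambda^\vee$ (already built into the setup via the Killing form) and the non-degeneracy of the Killing form on $\mathfrak h_\lambda$, this map is the transpose of
\[
\operatorname{ad}(x) \colon \mathfrak h_\lambda \longrightarrow V_\lambda, \qquad h \longmapsto [h,x].
\]
The image of the latter is precisely the tangent space $T_x C$ to the orbit $C = H_\lambda \cdot x$, so its rank is $\dim C$. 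Hence the rank of $\operatorname{ad}(x)\colon V_\lambda^* \to \mathfrak h_\lambda$ is also $\dim C$.

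Putting this together, $\dim \Lambda_{C,x} = \dim V_\lambda^* - \dim C = \dim V_\lambda - \dim C$, and consequently
\[
\dim \Lambda_C = \dim C + (\dim V_\lambda - \dim C) = \dim V_\lambda.
\]
The main potential obstacle is the duality step: one must be sure that the Killing form on $\widehat{\mathfrak g}$ restricts to a non-degenerate pairing between $V_\lambda$ and $V_\lambda^*$ and to a non-degenerate form on $\mathfrak h_\lambda$ (so that the rank of a linear map agrees with the rank of its transpose). The first is how $V_\lambda^*$ was defined in the paragraph preceding the lemma; the second follows because $\mathfrak h_\lambda$ is the Lie algebra of a reductive group (the centralizer of a semisimple parameter) inside the reductive $\widehat{\mathfrak g}$, so the Killing form is non-degenerate when restricted to $\mathfrak h_\lambda$. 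Once this is in place, the argument is essentially a weight calculation together with the standard description of $T_x C$.
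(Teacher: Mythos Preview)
Your proof is correct and follows essentially the same route as the paper's. The paper cites \cite{CFMMX}*{Proposition 6.3.1} to identify $\Lambda_C$ with the conormal bundle to $C$ in $V_\lambda$ and then quotes the general formula $\dim T^*_C(V)=\dim V$; your transpose computation (showing that $\{y:[x,y]=0\}$ is the annihilator of $T_xC=[\mathfrak h_\lambda,x]$) is precisely a self-contained proof of that identification.
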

\begin{proof}
In \cite{CFMMX}*{Proposition 6.3.1} we show that $\Lambda_{C}$, as defined above, is the conormal bundle to $V_\lambda$ above $C$. 
Recall that the cotangeant bundle is $V_\lambda\times V_\lambda^*$ so that the conormal bundle to $C\subset V_\lambda$ at a point $x\in C$ is given by $\Lambda_{C,x} = T^*_{C,x}(V_\lambda)$  is given by $\{ y\in T^*_x(V_\lambda)  \tq y(x') =0, \ \forall x'\in T_x(C)\}$. Observe also that $\dim \Lambda_C = \dim C + \dim \Lambda_{C,x}$ for any $x\in C$. Since $\dim T_x(C) = \dim C$ and $\dim T^*_x(V_\lambda) = \dim V^*_\lambda  = \dim V_\lambda$, it follows that $\dim \Lambda_{C,x} = \codim C$. Therefore, $\dim \Lambda_C = \dim C +  \codim C = \dim V_\lambda$.
\end{proof}

Pyasetskii duality $C \mapsto C^*$ defines a bijection between the $H_\lambda$-orbits in $V_\lambda$ and the $H_\lambda$-orbits in $V_\lambda^*$ and is uniquely characterized by the following property: under the isomorphism $\Lambda_\lambda \to \Lambda^*_\lambda$ defined by $(x,y)\to (y,x)$, the closure of $\Lambda_{C}$ in $\Lambda_\lambda$ is isomorphic to the closure of $\Lambda_{C^*}^*$ in $\Lambda^*_\lambda$.
This duality may also be characterized by passing to the regular conormal variety as follows.
Set
\[
\Lambda_{C}^\mathrm{reg} \ceq \Lambda_{C} \setminus \bigcup_{C'>C} \Lambda_{C}
\]
and
\[
\Lambda_\lambda^\mathrm{reg} \ceq \bigcup_C \Lambda_{C}^\mathrm{reg}.
\]
Likewise define $\Lambda_{D}^{*,\mathrm{reg}}$ for an $H_\lambda$-orbit $D\subset V^*$. 
Then 
\[
\Lambda_{C}^\mathrm{reg} \iso \Lambda_{C^*}^{*,\mathrm{reg}}
\]
under the isomorphism $\Lambda_\lambda\iso \Lambda^*_\lambda$ given above; see \cite{CFMMX}*{Lemma 6.4.2}.
It follows that 
\[
C^* = q_\mathrm{reg}(p_\mathrm{reg}^{-1}(C)),
\]
where $p_\mathrm{reg} : \Lambda^\mathrm{reg} \to V$ and $q_\mathrm{reg} : \Lambda^{*,\mathrm{reg}} \to V^*$ are the obvious projections.

\begin{proposition}\label{lemma:open}
The dual to the closed orbit $C_0$ in $V_\lambda$ is the open orbit in $V_\lambda^*$ and the dual to the open orbit $C^o$ in $V_\lambda$ is the closed orbit in $V_\lambda^*$.
\end{proposition}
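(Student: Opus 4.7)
The plan is to explicitly describe the conormal varieties $\Lambda_{C_0}$ and $\Lambda_{C^o}$ and then directly apply the formula $C^* = q_\mathrm{reg}(p_\mathrm{reg}^{-1}(C))$ recalled just before the statement. The first input is that the closed orbit on each side is $\{0\}$: because the $H_\lambda$-action on $V_\lambda$ and on $V_\lambda^*$ is linear, the origin is a fixed point and hence $\{0\}$ is a closed $H_\lambda$-orbit in both $V_\lambda$ and $V_\lambda^*$, and uniqueness of the closed orbit (\cite{CFMMX}*{Proposition 5.6}) forces $C_0 = \{0\}\subset V_\lambda$ and, similarly, $\{0\}\subset V_\lambda^*$ to be the closed orbit there.

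With this in hand, I would observe that $\Lambda_{C_0} = \{0\}\times V_\lambda^*$, since $[0,y]=0$ for every $y\in V_\lambda^*$, and that $\Lambda_{C^o} = C^o\times\{0\}$, since at any point $x$ of the open orbit $C^o$ one has $T_x(C^o)=V_\lambda$, so the conormal fiber (in the description recalled in the proof of Lemma~\ref{lemma:dim}) vanishes. Both descriptions are consistent with the equality $\dim \Lambda_C = \dim V_\lambda$ from Lemma~\ref{lemma:dim}.

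Finally I would apply $C^* = q_\mathrm{reg}(p_\mathrm{reg}^{-1}(C))$ in each case. For $C = C^o$, the projection $q$ is identically zero on $\Lambda_{C^o} = C^o\times\{0\}$, so $(C^o)^*\subseteq \{0\}$ and hence $(C^o)^*$ equals the closed orbit $\{0\}\subset V_\lambda^*$. For $C = C_0$, the projection $q$ restricted to $\Lambda_{C_0} = \{0\}\times V_\lambda^*$ is a linear isomorphism onto $V_\lambda^*$, so $q(p^{-1}(C_0)^{\mathrm{reg}})$ is dense in $V_\lambda^*$; since it is simultaneously a single $H_\lambda$-orbit, it must be the unique open orbit.

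The only step that requires a moment of thought is the identification $\Lambda_{C^o} = C^o\times\{0\}$, which however is immediate once one recognizes that an open subvariety of the ambient affine space has trivial conormal bundle; no serious obstacle is anticipated.
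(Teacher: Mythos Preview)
Your argument is correct, but it proceeds along a different route from the paper's. The paper argues by dimension: from the containment $\Lambda_C^\mathrm{reg} \subseteq C\times C^*$ (cited from \cite{CFMMX}*{Proposition 6.4.3}) and Lemma~\ref{lemma:dim} one extracts the inequality $\dim C^* \geq \codim C$, valid for every orbit; specializing to $C=C_0$ forces $\dim C_0^* = \dim V_\lambda^*$, so $C_0^*$ is the open orbit, and the second statement then follows from the involutivity of Pyasetskii duality. Your proof instead identifies $C_0=\{0\}$ explicitly, writes down $\Lambda_{C_0}=\{0\}\times V_\lambda^*$ and $\Lambda_{C^o}=C^o\times\{0\}$ by hand, and reads off both duals directly from the formula $C^*=q_\mathrm{reg}(p_\mathrm{reg}^{-1}(C))$. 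Your approach is more concrete and self-contained; the paper's has the merit of producing the general inequality $\dim C^*\geq \codim C$ as a byproduct and of not needing to know in advance that the closed orbit is the origin. One small point you leave implicit: for the $C_0$ case you need $\Lambda_{C_0}^\mathrm{reg}$ to be a nonempty open subset of $\Lambda_{C_0}$ before concluding its image under $q$ is dense; this is part of the general Pyasetskii setup (the $\overline{\Lambda_{C}}$ are the irreducible components of $\Lambda_\lambda$), but it would not hurt to say so.
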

\begin{proof}
First note that $\Lambda^\mathrm{reg}$ is open in $\Lambda_\lambda$.
By \cite{CFMMX}*{Proposition 6.4.3}, $\Lambda^\mathrm{reg}_{C} \subseteq C\times C^*$ for every $H_\lambda$-orbit $C\subset V_\lambda$. Then $\dim \Lambda_{C} \leq \dim C + \dim C^*$. It now follows from Lemma~\ref{lemma:dim} that $\dim C^* \geq \codim C$. 
Now take $C= C_0$, the closed orbit in $V_\lambda$, for which $\codim C_0 = \dim V_\lambda$.
Then  $\dim C_0^* \geq \codim C_0 = \dim V_\lambda$. Since $C_0^*\subset V_\lambda^*$, it follows that $\dim C_0^* = \dim V^*_\lambda$, so $C_0^*$ an the open $H_\lambda$-orbit in $V_\lambda$, which is unique since $V_\lambda$ is a prehomogeneous vector space.
Since orbit duality is an involution, it follows that the dual to the open orbit $C^o$ in $V_\lambda$ is the closed/trivial orbit in $V^*_\lambda$.
\end{proof}

In \cite{CFMMX}*{Section 7.9} we define a connected $H_\lambda$-stable open subset $\Lambda_\lambda^\mathrm{gen}\subseteq \Lambda_\lambda$ and in \cite{CFMMX}*{Section 7.10} we define a functor
\[
\NEvs_{C}: \Perv_{H_{\lambda}}(V_\lambda)\to \Loc_{H_{\lambda}}(\Lambda_{C}^{\mathrm{gen}}),
\]
for every $H_\lambda$-orbit $C$ in $V$. Since $\Lambda_\lambda^\mathrm{gen}$ is connected we may pick a base point $(x,y)\in \Lambda_{\lambda}^\mathrm{gen}$ and set 
\[
A^\ABV_{C}\ceq \pi_1(\Lambda_{C}^\mathrm{gen},(x,y)),
\]
which allows us to rewrite $\NEvs_{C}$ as a functor landing in the category $\Rep(A^\ABV_{C})$ of finite-dimensional $\ell$-adic representations of $A^\ABV_{C}$.
Since the connected components of $\Lambda_\lambda^\mathrm{gen}$ are the $\Lambda_{C}^\mathrm{gen}$ as $C$ ranges over $H_\lambda$-orbits in $V_\lambda$, we may assemble the local systems on these components into a local system on $\Lambda_{\lambda}^\mathrm{gen}$ and thus define $\NEvs_{\lambda}: \Perv_{H_{\lambda}}(V_\lambda)\to \Loc_{H_{\lambda}}(\Lambda_{\lambda}^{\mathrm{gen}})$, or, as a functor landing in $\Rep(A^\ABV_\lambda)$ where $A^\ABV_\lambda \ceq \coprod_{C} A^\ABV_{C}$.

If $C$ contains a Langlands parameter which is of Arthur type, there is a characterization of $\Lambda_{C}^{\mathrm{gen}}$ and $A^\ABV_{C}$ that is easier to work with.
An element $(x,y)\in \Lambda_\lambda^{\mathrm{reg}}$ is called \emph{strongly regular} if the $H_\lambda$ orbit of $(x,y)$ is dense in $\Lambda_{C}$, where $C$ is the $H_\lambda$-orbit of $x\in V_\lambda$; see \cite{CFMMX}*{Section 6.5}. Denote by $\Lambda_\lambda^{\mathrm{sreg}}$ the set of strongly regular elements in $\Lambda_\lambda$. 
Then 
\[
\Lambda_\lambda^{\mathrm{sreg}} \subseteq \Lambda_\lambda^{\mathrm{gen}} \subseteq \Lambda_\lambda^{\mathrm{reg}};
\]
see \cite{CFMMX}*{Proposition 6.5.1} and \cite{CFMMX}*{Section 7.9}.
Again fix an $H_\lambda$-orbit $C$ and set $\Lambda_{C}^{\mathrm{sreg}}\ceq \Lambda_\lambda^{\mathrm{sreg}}\cap \Lambda_{C}$. If $C$ is the orbit of a Langlands parameter of Arthur type, then $\Lambda_{C}^{\mathrm{sreg}}$ is non-empty by \cite{CFMMX}*{Proposition 6.1.1}.
On the other hand, depending on the orbit $C$, it is possible that $\Lambda_{C}^{\mathrm{sreg}}$ is empty; see \cite{CFK} for an example.
If $\Lambda_{C}^{\mathrm{sreg}}$ is non-empty then we pick $(x,y)\in \Lambda_{C}^{\mathrm{sreg}}$ and
\[
A^{\ABV}_{C} \iso \pi_0(Z_{H_{\lambda}}((x,y))),
\]
where $Z_{H_{\lambda}}((x,y)) $ denotes the centralizer of $(x,y)$ in $H_{\lambda}$.

Given an Arthur parameter $\psi:W_F\times \SL_2(\C)\times \SL_2(\C)\ra \Lgroup{G}$, we can define a Langlands parameter $\phi_\psi$ by $\phi_\psi(w,x)=\psi(w,x,\diag(|w|^{1/2},|w|^{-1/2}))$. 
Let $A_\psi$ be the component group of the centralizer of $\psi$.
Then 
\[
A^\ABV_{C_\phi} \cong A_\psi
\]
by \cite{CFMMX}*{Proposition 6.7.1}.

\subsection{ABV-packets}\label{ssec:background}

We shall need to consider the categories $\Rep_\lambda(G^\delta(F))$ as $\delta$ ranges over pure inner forms for $G$ over $F$, so we further define 
\[
\Rep_\lambda(G/F) \ceq  \mathop{\oplus}\limits_{[\delta]\in H^1(F,G)} \Rep_\lambda(G^\delta(F)),
\]
where the sum is actually taken over a set of representatives for $H^1(F,G)$.
Let $\phi_i$ be a set of representatives of $\dualgroup{G}$-conjugacy classes of Langlands parameters of $G(F)$ and its pure inner forms with infinitesimal parameter $\lambda$, and let $\Pi_{\phi_i}^{\mathrm{pure}}(G/F)$ be the Vogan $L$-packet of  $G(F)$ and its pure inner forms. Then 
 \begin{equation}\label{eq: simple objects of rep}
 \Rep_{\lambda}(G/F)^{\mathrm{simple}}_{/\mathrm{iso}}=\coprod_{i}\Pi_{\phi_i}^{\mathrm{pure}}(G/F).\end{equation}

Fix a Whittaker datum $ \mathfrak{w}$ for $G$. 
The local Langlands correspondence predicts an isomorphism 
  \begin{equation}\label{eq: LLC}\iota_{\mathfrak{w}}^{-1}: \Pi_{\phi_i}^{\mathrm{pure}}(G/F)\to \widehat {A_{\phi_i}}\cong \widehat{A_{C_i}},\end{equation}
  where $C_i=C_{\phi_i}$. Under this bijection, an element $\pi\in \Pi_{\phi_i}^{\mathrm{pure}}(G/F)$ corresponding to $\rho\in \widehat{A_{C_i}}$ will be denoted by $\pi(\phi_i,\rho)$ if the choice of Whittaker datum is understood.
  
   On the other hand, for each orbit $C\subset V_{\lambda}$ and each local system $\mathcal{L}\in \Loc_{H_{\lambda}}(C)$, there is an associated simple perverse sheaf $\IC(C,\mathcal{L})\in \Perv_{H_{\lambda}}(V_{\lambda}) $ and all  simple objects in $\Perv_{H_\lambda}(V_\lambda)$ are obtained this way.  Thus we have a bijection 
  \begin{equation}\label{eq: simple objects of perv}
\Perv_{H_{\lambda}}(V_\lambda)^{\mathrm{simple}}_{/\mathrm{iso}} 
\cong  \coprod_{i} \widehat{A_{C_i}}.
  \end{equation}

If we combine \eqref{eq: simple objects of rep}, \eqref{eq: LLC} and \eqref{eq: simple objects of perv}, we get an isomorphism 
\begin{equation}\label{llc:map}
\begin{array}{rcl}
\mathcal{P}_{\mathfrak{w}}: \Rep_{\lambda}(G/F)^{\mathrm{simple}}_{/\mathrm{iso}} &\to&  \Perv_{H_{\lambda}}(V_\lambda)^{\mathrm{simple}}_{/\mathrm{iso}} = \Perv_{\dualgroup{G}}(X_\lambda)^{\mathrm{simple}}_{/\mathrm{iso}}\\
(\pi,\delta) &\mapsto& \IC(C_\phi, \mathcal{L}_\rho)
\end{array}
\end{equation}
where $(\phi,\rho)$ is the enhanced Langlands parameter for $\pi$, so $\phi$ is the Langlands parameter for $\pi$ and $\rho$ is an irreducible representation of $A_\phi$ with central character $\delta$.
%see \cite{CFMMX}*{\Section 3.7} 
For a local Langlands parameter $\phi$ with infinitesimal parameter $\lambda$, we define 
\begin{equation}\label{eq: defintion of ABV}
\Pi_{\phi,\mathfrak{w}}^{\ABV}(G/F)
=
\{(\pi,\delta)\in  \Rep_{\lambda}(G/F)^{\mathrm{simple}}_{/\mathrm{iso}}: \NEvs_{C_\phi}(\mathcal{P}_{\mathfrak{w}}(\pi,\delta))\ne 0\}.
\end{equation}
Since
\[
\Perv_{H_{\lambda}}(\Lambda_C^{\mathrm{gen}})^{\mathrm{simple}}_{/\mathrm{iso}}\cong \widehat{A^\ABV_{C}},
\]
the functor $\NEvs_{C}$ defines a natural function 
\begin{equation}\label{eq: Arthur map} 
\Pi_{\phi,\mathfrak{w}}^{\ABV}(G/F)\to \widehat{A^\ABV_{C}}.
\end{equation}
We write $\Pi_{\phi,\mathfrak{w}}^{\ABV}(G$ for the subset of $\Pi_{\phi,\mathfrak{w}}^{\ABV}(G/F)$ consisting of representations of $G(F)$, not including its pure inner forms.
The main conjecture in \cite{CFMMX} says that if $G$ is a quasi-split classical group over a $p$-adic fields and $\psi$ is an Arthur parameter for $G(F)$ then 
\[
\Pi_\psi(G) = \Pi_{\phi_\psi,\mathfrak{w}}^{\ABV}(G),
\]
and the map in \eqref{eq: Arthur map} is identical to that defined by Arthur, in the sense of \cite{CFMMX}*{Conjecture 1, \S8.3}.

\subsection{Kazhdan-Lusztig hypothesis for p-adic groups}\label{ssec:KL}

The Kazhdan-Lusztig hypothesis for p-adic groups appeared first in \cite{Zelevinskii:KL} for general linear groups. Here we review the general form of the conjecture; see also \cite{Vogan:Langlands}*{Conjecture 8.11}, \cite{CFMMX}*{\S 10.3.3},  \cite{Solleveld:pKLH}*{Theorem E, (b) and (c)}, and, for Real groups,  \cite{ABV}*{Corollary 1.25}.
%The form of the conjecture we give essentially comes from \cite{CR} except modified for the case of a more general group where $A_\phi\neq \{1\}$.

For every irreducible $\pi\in \Rep_\lambda(G/F)$, let $M(\pi)$ be the standard representation for $\pi$.
For every $\pi_i$ and $\pi_j$ in $\Pi_\lambda(G/F)$, let $m_{i j}$ denoted the multiplicity of $\pi_i$ in $M(\pi_j)$ so that in $K\Rep_\lambda(G/F)$,
\[
  M(\pi_j) = \sum_{i\in I} m_{i j} [\pi_i].
\]
Let $m_\lambda = (m_{i j})_{i,j}$ be the matrix of these entries.

Recall that for each $\pi_i$ we associate a pair $(\phi_i,\rho_i)$ and hence $\IC(C_{\phi_i}, \mathcal{L}_{\rho_i}) \in\Perv_{H_\lambda}(V_\lambda)$. We wish to define the analogous change of basis matrix to standard sheaves.

For $\pi_j$, consider 
\[  
c_{i j}= \rank\left(\Hom\left( \IC(C_{\phi_i}, \mathcal{L}_{\rho_i})|_{C_{\phi_j}}, \mathcal{L}_{\rho_j}[\dim C_j]\right)\right),
\]
taken in the equivariant derived category $D_{H}(V)$. 
Let $c_\lambda = (c_{i j})_{i,j}$ be the matrix of these entries.

\begin{hypothesis}[$p$-adic analogue of the Kazhdan-Lusztig Hypothesis]\label{hypothesis}
In the Grothendieck group $K\Rep(G)$ the multiplicity of the irreducible representation $\pi_i$ in the standard representation $M(\pi_j)$ is given by
\[
m_\lambda = ^t{c_\lambda}. 
\]
\end{hypothesis}

\subsection{Geometric Implications}\label{ssec:geometric}

In this section we highlight several implications of the  Kazhdan-Lusztig Conjecture as articulated in Section \ref{ssec:KL} and the Desiderata of Section \ref{ssec:LLC}.

We first articulate some purely geometric facts.
\begin{proposition}\label{proposition:twisting}
Let $V$ be a Vogan variety, and $H$ the group which acts on it. 
Let $S \subset V$ be a smooth union of orbits with $\overline{S} = \overline{C}$. Suppose $\mathcal{L}$ is an equivariant \'etale local system on $S$. Then
\begin{enumerate}
\item 
$\IC(C,\mathcal{L}|_C)|_{S} = \mathcal{L}[\dim C]$ and consequently in the special case of $S = \overline{C}$ then $\IC(C,\mathcal{L}|_C) =  \mathcal{L}[\dim C]$.

\item 
If $D \subset S$ is an orbit with $D\neq C$ then $\Evs_D(\IC(C,\mathcal{L}|_C) = 0$ and consequently in the special case of $S=\overline{C}$ then $\Evs_D(\IC(C,\mathcal{L}|_C) \neq 0$ if and only if $D=C$.

\item\label{item:twisting}
If $\mathcal{F}$ is any \'etale local system on $D\subset S$ then 
\[
\IC(D, \mathcal{F})|_S \otimes  \mathcal{L} = \IC(D, F \otimes \mathcal{L}|_D)|_S.
\]

\item 
If $\mathcal{F} \in D_H(\overline{C})$ and $D\subset S$ then $\Evs_D(F \otimes \mathcal{L}) =(\mathcal{L}\boxtimes \1_{D^\ast}) \otimes \Evs_D(F)$.
\end{enumerate}
\end{proposition}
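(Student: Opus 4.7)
\emph{Proof plan.} The guiding idea is that $S$ being smooth with $\overline{S}=\overline{C}$ makes $C$ the unique open orbit in $S$, which collapses the intermediate extension of $\mathcal{L}|_C$ on $S$ to the ordinary shifted local system $\mathcal{L}[\dim C]$; once this reduction is performed in (1), the other parts follow from the fact that $\Evs_D$ vanishes on smooth complexes in directions away from the zero section, together with formal compatibilities of tensor product by a local system with the functors involved.

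For part (1), the first step is to verify that $C\subset S$ and $C$ is open in $S$: since $S$ is a union of orbits, $\overline{S}=\overline{C}$ forces some orbit in $S$ to have dimension $\dim C$, and the only such orbit inside $\overline{C}$ is $C$ itself; any other orbit $D\subset S$ lies in $\overline{C}\setminus C$ and has strictly smaller dimension. In particular $C$ is the unique top-dimensional orbit in $S$, hence is open in $S$. Next, since $S$ is smooth and $\mathcal{L}$ is a local system on $S$, the shifted complex $\mathcal{L}[\dim C]$ is perverse on $S$ and coincides with the intermediate extension $\IC_S(C,\mathcal{L}|_C)$, because on a smooth ambient variety the IC of a local system from a dense open subset is the shifted local system. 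Finally, because the intermediate extension construction is local on the target, $\IC_{\overline{C}}(C,\mathcal{L}|_C)|_S \iso \IC_S(C,\mathcal{L}|_C)=\mathcal{L}[\dim C]$, proving (1); the special case $S=\overline{C}$ is immediate.

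Part (2) is then microlocal: for $D\subset S$ with $D\neq C$, the functor $\Evs_D$ depends only on the sheaf in a small neighborhood of $D$, which lies in $S$; by (1) this is the smooth shifted local system $\mathcal{L}[\dim C]$, whose micro-support is contained in the zero section of $T^*S$. Since $\Evs_D$ is computed along the non-zero conormal directions in $\Lambda_D^{\mathrm{gen}}$, it must vanish. For the ``only if'' half of the special case $S=\overline{C}$, I would invoke the standard identification $\Evs_C(\IC(C,\mathcal{L}|_C))\iso \mathcal{L}$ as a local system on $\Lambda_C^{\mathrm{gen}}$ (up to shift), which is nonzero by construction.

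Parts (3) and (4) are formal compatibilities with tensoring by a local system. For (3), tensoring with $\mathcal{L}$ is an exact auto-equivalence of the equivariant derived category on $S$ commuting with $j_!$ and $j_*$ for any open immersion, hence with $j_{!*}$; applied to $j:D\hookrightarrow \overline{D}\cap S$ and combined with restriction from $\overline{D}$ to $S$ this yields (3), where the identity $\mathcal{F}\otimes \mathcal{L}|_D$ is well-defined because $D\subset S$. For (4), I would unfold the definition of $\Evs_D$ from \cite{CFMMX}*{\S 7.10} as a composition of Fourier transform, nearby/vanishing cycles, and restriction to $\Lambda_D^{\mathrm{gen}}$, and check that tensoring by $\mathcal{L}$ commutes with each step via the projection formula along $p:\Lambda_D\to D$; the outcome is the tensor product with $p^{\ast}(\mathcal{L}|_D) = \mathcal{L}\boxtimes \1_{D^\ast}$ on $\Lambda_D$. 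The main potential obstacle is verifying the Fourier-transform step of (4), namely that fiberwise Fourier transform over $D$ intertwines tensor product by the pulled-back local system on both sides (which reduces to the fact that Fourier transform sends $p^\ast\mathcal{L}\otimes \mathcal{F}$ to $p^\ast\mathcal{L}\otimes \widehat{\mathcal{F}}$), together with careful tracking of shifts so that the identifications on the conormal bundle match the conventions defining $\Evs_D$.
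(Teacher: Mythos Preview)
Your proposal is correct and tracks the paper's proof closely: for (1) the paper simply cites \cite{BBD}*{Lemma 4.3.3}, which is precisely the smoothness argument you spell out; for (2) and (4) the paper invokes, respectively, the characterization of characteristic cycles of local systems on smooth varieties and a modification of \cite{CFMMX}*{Proposition 7.13}, which are exactly your micro-support and unfold-the-definition arguments. One small caution on (2): the micro-support of $\IC(C,\mathcal{L}|_C)$ viewed as a sheaf on $V$ near $D$ is the conormal $T^*_S V$, not literally the zero section of $T^*S$; what makes the argument work is the dimension count $\dim T^*_S V|_D = \dim D + \codim_V C < \dim V$, so this contributes no component to the characteristic cycle beyond $\overline{\Lambda_C}$. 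The only substantive difference is in (3): the paper verifies directly that $\IC(D,\mathcal{F})|_S\otimes\mathcal{L}$ is perverse (using flatness of $\mathcal{L}$ together with the Verdier-duality identity $D_S(\mathcal{G}\otimes\mathcal{L})=D_S(\mathcal{G})\otimes\mathcal{L}^\vee$) and simple (using exactness of $-\otimes\mathcal{L}$), whereas you argue that tensoring by $\mathcal{L}$ is a $t$-exact auto-equivalence commuting with $j_!$ and $j_*$, hence with $j_{!*}$. Both routes are valid; yours is slightly more conceptual and bypasses the separate simplicity check, while the paper's makes the role of duality explicit.
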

\begin{proof}
\begin{enumerate}
\item
The first point is \cite{BBD}*{Lemma 4.3.3}. 
\item The second can be seen as a consequence of the fact that $\Evs$ computes characteristic cycles, and a characterization of the characteristic cycles of local systems on smooth varieties.
Alternatively, one can argue similarly to the proof of \cite{CFMMX}*{Lemma 7.21}.
\item
That $ \IC(D, \mathcal{F})|_S \otimes  \mathcal{L}$ is perverse uses flatness of $ \mathcal{L}$ and $\mathcal{L}^\vee = {\rm RHom}(\mathcal{L} ,\1_S)$ and that under Verdier duality we have
\begin{align*}
 D_S( \IC(D, \mathcal{F})|_S \otimes  \mathcal{L})  =  D_S( \IC(D, \mathcal{F})|_S )  \otimes \mathcal{L}^\vee.
 \end{align*}
The simplicity of $ \IC(D, \mathcal{F})|_S \otimes  \mathcal{L}$ is a consequence of exactness, that is
\[   \IC(D',\mathcal{F}') \rightarrow  \IC(D, \mathcal{F})|_S \otimes  \mathcal{L} \]
implies, for example,
\[  \IC(D',\mathcal{F}')  \otimes  \mathcal{L}^\vee  \rightarrow  \IC(D, \mathcal{F})|_S.\]
\item
The fourth point is a modification of the proof of \cite{CFMMX}*{Proposition 7.13}.\qedhere
\end{enumerate}
\end{proof}

From this we have the following consequences in representation theory:
\begin{corollary}\label{Cor:KLtwisting}
Applying the proposition to any situation where $S=\overline{C}$ is closed we find that if $C$ in $V$ is an orbit with $\overline{C}$ smooth and $\mathcal{L}$ is an equivariant local system on $\overline{C}$ then
\begin{enumerate}
\item Assuming the Kazhdan-Lusztig Hypothesis (see Section \ref{ssec:KL}), $\pi(C,\mathcal{L}|_C)$ has multiplicity $1$ in $M_{\pi(D,\mathcal{L}|_D)}$ for each $D \subset \overline{C}$.
\item $\pi(C,\mathcal{L}|_C)$  appears in a unique ABV-packet.
\item Twisting by $\mathcal{L}$ permutes characters for the parts of $L$- and $A$-packets coming from representations in $\overline{C}$ uniformly.
\end{enumerate}
The above applies in particular when $C$ is the open orbit and $\mathcal{L}$ is any local system on all of $V$ or for any orbit $C$ where $\overline{C}$ is smooth and with $\mathcal{L} = \1_{\overline{C}}$.
\end{corollary}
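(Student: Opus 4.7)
\textbf{Plan for Corollary~\ref{Cor:KLtwisting}.}
Each of (1), (2), (3) will be the representation-theoretic shadow of the correspondingly numbered clause of Proposition~\ref{proposition:twisting}, transported through the LLC of Section~\ref{ssec:LLC} together with, for (1), the Kazhdan--Lusztig Hypothesis of Section~\ref{ssec:KL}. The point of the smoothness hypothesis on $\overline{C}$ is that it collapses $\IC(C,\mathcal{L}|_C)$ to a shifted local system on all of its support, so every microlocal or cohomological quantity we need becomes a rank-one calculation.

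For (1), I would first invoke Proposition~\ref{proposition:twisting}(1) to write $\IC(C,\mathcal{L}|_C)=\mathcal{L}[\dim C]$, so that on each orbit $D\subset\overline{C}$ its restriction is the rank-one shifted local system $\mathcal{L}|_D[\dim C]$. Plugging this into the Hom space defining the entry $c_{ij}$ in Hypothesis~\ref{hypothesis} and pairing with $\mathcal{L}|_D[\dim D]$, the computation is controlled by a single rank-one local system, so the relevant rank is $1$; the Kazhdan--Lusztig identification $m_\lambda={}^tc_\lambda$ then yields multiplicity exactly $1$ for $\pi(C,\mathcal{L}|_C)$ in $M_{\pi(D,\mathcal{L}|_D)}$.

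For (2), the representation $\pi(C,\mathcal{L}|_C)$ corresponds under $\mathcal{P}_\mathfrak{w}$ to $\IC(C,\mathcal{L}|_C)$, which by Proposition~\ref{proposition:twisting}(1) is supported on $\overline{C}$. Membership in $\Pi^{\ABV}_\phi$, as in~\eqref{eq: defintion of ABV}, is tested by $\NEvs_{C_\phi}(\IC(C,\mathcal{L}|_C))\neq 0$. The support condition forces $C_\phi\subset\overline{C}$, and Proposition~\ref{proposition:twisting}(2) rules out every $D\subset\overline{C}$ with $D\neq C$; since $\NEvs$ refines $\Evs$ in the sense of Section~\ref{ssec:background}, the same vanishing propagates to $\NEvs_D$, so only $C_\phi=C$ survives and the ABV-packet is unique.

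For (3), Proposition~\ref{proposition:twisting}(3) shows that tensoring by $\mathcal{L}$ sends $\IC(D,\mathcal{F})|_{\overline{C}}$ to $\IC(D,\mathcal{F}\otimes\mathcal{L}|_D)|_{\overline{C}}$, and Proposition~\ref{proposition:twisting}(4) is the matching microlocal identity for $\Evs_D$, hence for $\NEvs_D$. Through the bijection~\eqref{llc:map} and the labelling in~\eqref{eq: Arthur map}, the restriction $\mathcal{L}|_D$ is promoted to a character of $A_{\phi_D}$, respectively of $A^{\ABV}_{C}$, and tensoring acts by twisting the $L$- and $A$-packet parameters $(\phi_D,\rho)$ by this character. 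The main obstacle is in this last part: one must check that these characters of the different component groups are coherently induced by the single equivariant local system $\mathcal{L}$ on $\overline{C}$, so that the resulting permutations of packet labels are realised by one uniform twist as $D$ ranges over the orbits in $\overline{C}$ --- this ``uniformity'' statement is the delicate content, whereas (1) and (2) are formal consequences of Proposition~\ref{proposition:twisting} once the smooth case has been analysed.
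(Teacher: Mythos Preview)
Your proposal is correct and follows exactly the line the paper intends: the paper gives no explicit proof of this corollary, presenting it simply as the representation-theoretic shadow of Proposition~\ref{proposition:twisting}, and you have correctly matched clause (1) to part (1) of the proposition plus Hypothesis~\ref{hypothesis}, clause (2) to part (2), and clause (3) to parts (3) and (4). One small caution on (1): be careful with the direction of the indices when invoking $m_\lambda={}^tc_\lambda$ --- you want the entry coming from the stalk of $\IC(C,\mathcal{L}|_C)$ on $D$ to feed into the multiplicity of $\pi(C,\mathcal{L}|_C)$ in $M_{\pi(D,\mathcal{L}|_D)}$, and the paper's indexing convention makes this easy to reverse; but the substance of your computation (rank one because $\IC(C,\mathcal{L}|_C)|_D=\mathcal{L}|_D[\dim C]$) is exactly right.
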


The above has the following consequences on the Local Langlands Correspondence.
\begin{proposition}
Assuming the Kazhdan-Lusztig Hypothesis (see Section \ref{ssec:KL}). 
\begin{enumerate}
\item Every standard module for a representation in the $L$-packet of the open orbit is irreducible.
\item Every standard module for a representation in the ABV-packet of the open orbit is irreducible.
\item If $C$ is an orbit for which every standard module, for representations in the $L$-packet, is irreducible then $C$ is the open orbit.
\item If $C$ is an orbit for which every standard module, for representations in the ABV-packet, is irreducible then $C$ is the open orbit.
\end{enumerate}
\end{proposition}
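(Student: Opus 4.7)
The plan is to reduce all four statements to the Kazhdan--Lusztig identity $m_{ij}=c_{ji}$ together with the geometric fact (Proposition~\ref{lemma:open}) that the Pyasetskii dual of the open orbit $C^o\subset V_\lambda$ is the origin in $V_\lambda^*$.

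For (1), let $\pi_j=\pi(C^o,\rho_j)$ lie in the $L$-packet of $C^o$. Expanding
\[
m_{ij} = c_{ji} = \rank\Hom\bigl(\IC(C_i,\mathcal{L}_{\rho_i})|_{C^o},\, \mathcal{L}_{\rho_j}[\dim V_\lambda]\bigr),
\]
I use that $V_\lambda$ is irreducible and $C^o$ is its unique open dense orbit: every other orbit $C_i$ satisfies $\overline{C_i}\cap C^o=\emptyset$, so $\IC(C_i,\mathcal{L}_{\rho_i})|_{C^o}=0$. When $C_i=C^o$ the restriction equals $\mathcal{L}_{\rho_i}[\dim V_\lambda]$, and simplicity of the local systems gives $c_{ji}=\delta_{\rho_i,\rho_j}$. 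Thus $M(\pi_j)=\pi_j$.

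For (2), I will show $\Pi^\ABV_\phi=\Pi_\phi$ when $C_\phi=C^o$ and then invoke (1). By Proposition~\ref{lemma:open},
\[
\Lambda_{C^o}^{\mathrm{gen}} \subset \Lambda_{C^o}^{\mathrm{reg}} \subset C^o \times \{0\}.
\]
If $C_j\neq C^o$, then $\IC(C_j,\mathcal{L}_{\rho_j})$ is supported on $\overline{C_j}$, which is disjoint from the open orbit $C^o$. Since $\NEvs_{C^o}$ is microlocal and computed from the germ of its argument along $C^o$, it vanishes on such a sheaf. Therefore $\pi_j\in \Pi^\ABV_\phi$ forces $C_j=C^o$, and (1) completes the argument.

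For (3), I argue the contrapositive: if $C\neq C^o$, the $L$-packet of $C$ contains a representation with reducible standard module. Take $\pi=\pi(C,\1_C)$, the member indexed by the trivial character. The constant sheaf $\1_{V_\lambda}$ is an $H_\lambda$-equivariant local system on the smooth closure $\overline{C^o}=V_\lambda$, so Corollary~\ref{Cor:KLtwisting} applied with $C^o$ and $\mathcal{L}=\1_{V_\lambda}$ gives that $\pi(C^o,\1_{C^o})$ has multiplicity one in $M(\pi(D,\1_D))$ for every orbit $D\subset V_\lambda$. Specializing to $D=C$ and combining with the diagonal multiplicity $m_{jj}=1$ exhibits two distinct irreducible constituents of $M(\pi(C,\1_C))$, so it is reducible. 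Finally, (4) follows from (3) together with the known containment $\Pi_\phi\subset \Pi^\ABV_\phi$: irreducibility throughout $\Pi^\ABV_\phi$ forces the same inside $\Pi_\phi$, and (3) then forces $C=C^o$.

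The delicate point is the microlocal vanishing used in (2). The support heuristic is convincing, but I would verify it rigorously by unpacking the construction of $\NEvs_{C^o}$ in \cite{CFMMX}*{\S 7.10} and using $\Lambda_{C^o}^{\mathrm{gen}}\subset C^o\times\{0\}$ to identify $\NEvs_{C^o}(\mathcal{F})$ with (essentially) the restriction of $\mathcal{F}$ to $C^o$, which vanishes on any simple perverse sheaf whose support does not contain $C^o$.
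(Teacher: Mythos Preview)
Your proof is correct and follows essentially the same route as the paper for (1), (3), and (4); the arguments are nearly verbatim matches. For (2), the paper simply invokes \cite{CFMMX}*{Proposition 7.10} (equivalently Theorem 7.22(b)) to get that the ABV-packet equals the $L$-packet over the open orbit, whereas you sketch a direct microlocal argument for this. Your heuristic---that $\Lambda_{C^o}^{\mathrm{gen}}\subset C^o\times\{0\}$ forces $\NEvs_{C^o}$ to depend only on the restriction to $C^o$---is exactly what that proposition establishes, so the self-acknowledged loose end in your final paragraph is resolved by citing that reference rather than unpacking the construction yourself.
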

\begin{proof}
The first point is a direct consequence of the Kazhdan-Lusztig Hypothesis and the observation $\IC(C,\mathcal{L})|_D \neq 0$ implies $D\subseteq \overline{C}$.
The second point is a consequence of \cite{CFMMX}*{Proposition 7.10}, which implies that the $L$-packet and ABV-packet are equal for the open orbit.
The third point is an immediate consequence of Corollary \ref{Cor:KLtwisting}.
The fourth point is a consequence of the fact that every $L$-packet is contained in the corresponding ABV-packet.
\end{proof}

\begin{remark}
Unless the group $H$ is disconnected, it is unusual for there to be more than one \'etale local system on all of $V$.
When the group $H$ is disconnected each irreducible representation $\rho$ of $ H/H^o$ gives rise to a local system $\1_\rho$ on all of $V$.
This applies in particular if we assume Desiderata~(\ref{wittwisting}) of Section~\ref{ssec:LLC}.
We believe that if there is more than one Whittaker normalization for an $L$-packet $\Pi_\phi(G)$, the corresponding group $H_\lambda$ is disconnected; here, $\lambda$ is the infinitesimal parameter of $\phi$.
\end{remark}

The following proposition provides additional information when the twisting arises from the group $H$ being disconnected.
\begin{proposition}
Suppose $H$ is disconnected and $H^{o}$ is the connected component of the identity.
Let $C'$ be an $H^{o}$ orbit in $V$ and $C = HC'$ be its inflation to an $H$ orbit.
Let $\mathcal{F} \in D_{H}(V)$ and let ${\rm forget}_{H^o}(\mathcal{F}) \in  D_{H^o}(V)$ be the output of the forgetful functor from $D_{H}(V)$ to $D_{H^o}(V)$. Then
\begin{enumerate}
\item $\Ft({\rm forget}_{H^o}(\mathcal{F})) = {\rm forget}_{H^o}(\Ft(\mathcal{F}))$.
\item $\Ev_{C'}({\rm forget}_{H^o}(\mathcal{F})) = {\rm forget}_{H^o}(\Ev_{C}(\mathcal{F}))|_{T_{C'}}$.
\item If $C=C'$ then for any $H$ equivariant local system $\mathcal{L}$ on $C$ we have ${\rm forget}_{H^o}(\IC(C,\mathcal{L})) = \IC(C',{\rm forget}_{H^o}(\mathcal{L}))$.
\end{enumerate}
\end{proposition}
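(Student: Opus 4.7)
The plan is to treat all three parts as manifestations of a single general principle: the Fourier transform $\Ft$, the evidence functor $\Ev_C$, and the intermediate extension $\IC$ are all built out of the six sheaf operations $f^*, f_*, f_!, f^!, \otimes, \sheafHom$ (together with the intermediate extension and the Artin--Schreier sheaf), all of which commute with the forgetful functor from $D_H(V)$ to $D_{H^o}(V)$ because they are defined at the level of underlying complexes, with equivariance carried along as extra structure.

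For (1), I would unfold the definition of $\Ft$ as $(\pi_{V^*})_!(\pi_V^* (-)\otimes \mathcal{L}_\psi)[\dim V]$, where $\pi_V, \pi_{V^*}$ are the projections from $V\times V^*$ and $\mathcal{L}_\psi$ is the Artin--Schreier pullback along the pairing. Since the projections and the Artin--Schreier sheaf are defined on underlying schemes without any reference to $H$, and since each of $f^*, f_!, \otimes$ commutes with the forgetful functor (the forgetful functor being just restriction of equivariance data), the equality $\Ft(\operatorname{forget}_{H^o}\mathcal{F})=\operatorname{forget}_{H^o}\Ft(\mathcal{F})$ is formal.

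For (2), I would first note that because $H^o$ has finite index in $H$, the $H^o$-orbit $C'$ is open and closed inside the $H$-orbit $C$, and correspondingly $T_{C'}$ is open and closed inside the restriction of $T_C$ over $C'$. The functor $\Ev_C$ in \cite{CFMMX}*{\S 7} is defined as a composition of $\Ft$, restriction to $T_C$, and a microlocal vanishing-cycles or nearby-cycles construction. Since (1) already handles $\Ft$, it remains to observe that restriction to a subvariety and the vanishing/nearby cycles functors are both defined at the level of underlying complexes, so both commute with $\operatorname{forget}_{H^o}$. The passage from $C$ to $C'$ then corresponds precisely to further restricting to the open and closed piece $T_{C'}\subset T_C|_{C'}$, yielding the stated identity.

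For (3), when $C=C'$ the underlying topological situation is the same whether we remember $H$- or $H^o$-equivariance: $\IC(C,\mathcal{L})=j_{!*}(\mathcal{L}[\dim C])$ for $j\colon C\hookrightarrow \bar C$, which is a construction intrinsic to the underlying constructible derived category. Applying $\operatorname{forget}_{H^o}$ simply restricts the equivariance carried by $\mathcal{L}$, and intermediate extension commutes with this restriction because it is characterized by its properties as an underlying perverse sheaf (no composition factors supported on $\bar C\setminus C$). The main obstacle, which is really the only nontrivial verification, is in (2): one must check carefully that the microlocal/vanishing-cycles step in the definition of $\Ev$ commutes with $\operatorname{forget}_{H^o}$ and that the identification of $T_{C'}$ as a connected component of $T_C|_{C'}$ produces exactly the restriction appearing on the right-hand side. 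The remaining parts are then essentially formal consequences of functoriality.
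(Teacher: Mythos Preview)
The paper does not supply a proof of this proposition; it is stated and then immediately used. Your approach is the natural one and is essentially what the authors are tacitly invoking: each of $\Ft$, $\Ev$, and $\IC$ is built from the six operations (plus intermediate extension and nearby/vanishing cycles), all of which are defined on underlying complexes and hence commute with restricting the equivariance from $H$ to $H^o$. Your identification in part~(2) of $C'$ as open and closed in $C$, and of $T_{C'}$ as the corresponding component of $T_C$, is the one geometric input needed, and it is correct since $[H:H^o]$ is finite. There is nothing to compare against; you have simply written out the argument the paper omits.
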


We now summarize the implications of the above when applied with a local system $\eta_{\mathfrak{w},\mathfrak{w'}}^{-1}$ arising from renormalizing with a different Whittaker datum as in Desiderata (\ref{wittwisting}) of Section~\ref{ssec:LLC}. 
\begin{proposition}
Assume Desiderata (\ref{wittwisting}) of Section \ref{ssec:LLC}. Then:
\begin{enumerate}
\item Tensoring with $\eta_{\mathfrak{w},\mathfrak{w'}}^{-1}$ permutes the fibers of the forgetful map $D_H(V) \rightarrow D_{H^o}(V)$ where we restrict the equivariance to the connected component of $H$.
\item $ \IC(C,\mathcal{L}) )  \otimes  \eta_{\mathfrak{w},\mathfrak{w'}}^{-1} = \IC(C,\mathcal{L} \otimes \eta_{\mathfrak{w},\mathfrak{w'}}^{-1}) $, so that the Kazhdan-Lusztig Hypothesis (see Section \ref{ssec:KL}) remains consistent. 
That is, if it holds for one normalization it holds for both.
\item $ \Ev_C( \mathcal{F}\otimes  \eta_{\mathfrak{w},\mathfrak{w'}}^{-1}  )  =  \Ev_C( \mathcal{F} )  \otimes  \eta_{\mathfrak{w},\mathfrak{w'}}^{-1}$, so that characters of the associated distributions (see \cite{CFMMX}*{Section 8.2}) are compatibly permuted and ABV-packets are preserved.
\item ABV-packets are independent of the choice of Whittaker datum.
\end{enumerate}
\end{proposition}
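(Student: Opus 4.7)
The plan is to view $\eta_{\mathfrak{w},\mathfrak{w}'}^{-1}$ as an invertible $H_\lambda$-equivariant rank-one local system on all of $V_\lambda$ (pulled back via the structure morphism $V_\lambda \to \mathrm{pt}$ from a character of $H_\lambda/H_\lambda^o$), and then to transport this twist through the various functors defining the ABV-packet. Items (1)--(3) are then formal consequences of invertibility together with the twisting statements already proved in Proposition~\ref{proposition:twisting}, while item (4) combines these with Desideratum~(\ref{wittwisting}).

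\textbf{Items (1) and (2).} For (1), since $\eta_{\mathfrak{w},\mathfrak{w}'}^{-1}$ is pulled back from a representation of $H_\lambda/H_\lambda^o$, it becomes canonically trivial under the forgetful functor $D_{H_\lambda}(V_\lambda) \to D_{H_\lambda^o}(V_\lambda)$; hence $\mathcal{F}$ and $\mathcal{F}\otimes \eta_{\mathfrak{w},\mathfrak{w}'}^{-1}$ share a common image there, and since tensoring is invertible (with inverse $\eta_{\mathfrak{w}',\mathfrak{w}}^{-1}$), it permutes the fibers. Item (2) is the direct specialization of Proposition~\ref{proposition:twisting}(\ref{item:twisting}) to $S = V_\lambda$ (smooth, with $\overline{S}$ equal to the closure of the open orbit) and $\mathcal{L} = \eta_{\mathfrak{w},\mathfrak{w}'}^{-1}$. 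For the Kazhdan--Lusztig consistency, observe that tensoring by an invertible local system is an auto-equivalence of $\Perv_{H_\lambda}(V_\lambda)$ preserving simples, so the Hom ranks $c_{ij}$ are preserved under the induced permutation of labels; Desideratum~(\ref{wittwisting}) effects the matching permutation on the representation side via $\iota_{\mathfrak{w}'}(\rho) = \iota_\mathfrak{w}(\rho\otimes \eta_{\mathfrak{w},\mathfrak{w}'}^{-1})$, so the multiplicities $m_{ij}$ transport the same way and $m_\lambda = {}^t c_\lambda$ holds for one normalization if and only if it holds for the other.

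\textbf{Items (3) and (4).} Item (3) is Proposition~\ref{proposition:twisting}(4) with $\mathcal{L} = \eta_{\mathfrak{w},\mathfrak{w}'}^{-1}$: the factor $\mathcal{L}\boxtimes \1_{D^*}$ restricted to $\Lambda_C^\mathrm{gen}$ is the pullback of the same global local system, so the formula reduces to the claimed identity. Item (4) follows by combining (2) and (3). Desideratum~(\ref{wittwisting}), under the equivalence \eqref{llc:map}, yields an identity of the form $\mathcal{P}_{\mathfrak{w}'}(\pi,\delta) = \mathcal{P}_\mathfrak{w}(\pi,\delta)\otimes \eta_{\mathfrak{w},\mathfrak{w}'}^{\pm 1}$, whereupon (3) gives $\NEvs_{C_\phi}(\mathcal{P}_{\mathfrak{w}'}(\pi,\delta)) = \NEvs_{C_\phi}(\mathcal{P}_\mathfrak{w}(\pi,\delta))\otimes \eta_{\mathfrak{w},\mathfrak{w}'}^{\pm 1}$. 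Since tensoring by an invertible local system preserves non-vanishing, the defining condition of \eqref{eq: defintion of ABV} holds for $(\pi,\delta)$ under $\mathfrak{w}$ if and only if it holds under $\mathfrak{w}'$, establishing (4).

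\textbf{Main obstacle.} The one non-formal ingredient is the translation in item (4) of Desideratum~(\ref{wittwisting}) into the tensor identity $\mathcal{P}_{\mathfrak{w}'} = \mathcal{P}_\mathfrak{w}(-)\otimes \eta_{\mathfrak{w},\mathfrak{w}'}^{\pm 1}$: one must verify that the relabeling $\rho \mapsto \rho\otimes \eta_{\mathfrak{w},\mathfrak{w}'}^{-1}$ of irreducible $A_{C_\phi}$-representations corresponds, under the equivalence $\Rep(A_{C_\phi}) \cong \Loc_{H_\lambda}(C_\phi)$, to tensoring by the restriction of the global local system $\eta_{\mathfrak{w},\mathfrak{w}'}^{-1}|_{C_\phi}$. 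This is precisely the content of item~(2) combined with the naturality of the identification $A_{C_\phi} \cong A_\phi$ from \cite{CFMMX}*{Lemma 4.6.1}, after which the remainder of the argument is the observation that non-vanishing is invariant under twisting by an invertible sheaf.
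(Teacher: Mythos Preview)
Your proposal is correct and matches the paper's approach: the paper presents this proposition explicitly as a summary of ``the implications of the above when applied with a local system $\eta_{\mathfrak{w},\mathfrak{w}'}^{-1}$,'' and gives no separate proof; you have correctly identified that items (2) and (3) are instances of Proposition~\ref{proposition:twisting}(\ref{item:twisting}) and Proposition~\ref{proposition:twisting}(4) with $S=V_\lambda$, that item (1) follows from $\eta_{\mathfrak{w},\mathfrak{w}'}^{-1}$ being trivial on $H_\lambda^o$, and that item (4) is obtained by transporting Desideratum~(\ref{wittwisting}) across \eqref{llc:map}. Your added remark on the ``main obstacle'' --- verifying that the character twist on $A_\phi$ corresponds to tensoring by the restricted local system on $C_\phi$ --- is a useful clarification the paper leaves implicit.
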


\begin{corollary}\label{cor: generic appear in every standard module}
Assume Desiderata (\ref{wittwisting}) of Section \ref{ssec:LLC}. Let $C$ be the open orbit of $V$.
If there exists a Whitaker normalization in which $\pi = \pi(C,\1_C)$ then in any normalization:
\begin{enumerate}
\item The only $L$-packet containing $\pi$  is the one for $C$.
\item The only ABV-packet containing $\pi$  is the one for $C$.
\item Assuming the  Kazhdan-Lusztig Hypothesis (see Section \ref{ssec:KL}) for each orbit $D$ of $V$ the representation $\pi$ appears as a sub-quotient for some standard module associated to a representation in the L packet associated to $D$.
\end{enumerate}
\end{corollary}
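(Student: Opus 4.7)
The plan hinges on the observation that when $C$ is the open orbit, $\overline{C}=V$ is a vector space and hence smooth, so both Proposition~\ref{proposition:twisting} and Corollary~\ref{Cor:KLtwisting} apply with $S=\overline{C}=V$ and $\mathcal{L}$ any equivariant local system on $V$. My approach is to leverage the preceding proposition on Whittaker-datum independence to reduce the question, in an arbitrary normalization $\mathfrak{w}'$, to structural properties of a single simple perverse sheaf supported on $\overline{C}$.

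First I would translate the hypothesis between normalizations. Using the preceding proposition together with Desideratum~(\ref{wittwisting}), the equality $\mathcal{P}_{\mathfrak{w}}(\pi)=\IC(C,\1_C)$ implies
$\mathcal{P}_{\mathfrak{w}'}(\pi)=\mathcal{P}_{\mathfrak{w}}(\pi)\otimes \eta_{\mathfrak{w},\mathfrak{w}'}^{-1}=\IC(C,\1_C\otimes \eta_{\mathfrak{w},\mathfrak{w}'}^{-1}|_C)=\IC(C,\mathcal{L}')$
for a suitable $H_\lambda$-equivariant local system $\mathcal{L}'$ on $C$. So in every normalization the simple perverse sheaf attached to $\pi$ is supported on $\overline{C}$; by the bijection~\eqref{llc:map} this yields (1), and it furnishes a concrete description of $\mathcal{P}_{\mathfrak{w}'}(\pi)$ with which to attack (2).

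For (2), I would apply Proposition~\ref{proposition:twisting}(2) to $\IC(C,\mathcal{L}')$ with $S=\overline{C}=V$: for every orbit $D\neq C$, $\Evs_D(\IC(C,\mathcal{L}'))=0$. Since $\NEvs_D$ is obtained from (a refinement of) $\Evs_D$ by restriction to the generic open subset $\Lambda_D^{\mathrm{gen}}$ of the conormal locus over $D$ (cf.\ \cite{CFMMX}*{\S7.10}), the vanishing of $\Evs_D$ propagates to $\NEvs_D$, so the defining condition in~\eqref{eq: defintion of ABV} fails at every $\phi$ with $C_\phi\neq C$.

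For (3), I would invoke Corollary~\ref{Cor:KLtwisting}(1) with $\mathcal{L}=\1_V$, which is globally defined because $V$ is smooth: for each orbit $D\subset \overline{C}=V$, the representation $\pi=\pi(C,\1_C)$ occurs with multiplicity one in $M(\pi(D,\1_D))$. Since every orbit of $V$ lies in $\overline{C}$, this exhibits, for each $D$, an explicit member of the $L$-packet of $D$ --- namely $\pi(D,\1_D)$ --- whose standard module contains $\pi$ as a subquotient. The step I expect to require the most care is the propagation of vanishing from $\Evs_D$ to $\NEvs_D$ in item (2); once that is in place, the remaining assertions follow essentially by direct citation of the preceding proposition and corollary.
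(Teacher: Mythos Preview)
Your argument is correct and matches the paper's intended route: the corollary is stated without proof precisely because it is meant to be read off directly from Proposition~\ref{proposition:twisting}, Corollary~\ref{Cor:KLtwisting}, and the immediately preceding proposition on Whittaker-datum independence, exactly as you do.

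One simplification: for item~(2) you need not unwind the relation between $\Evs_D$ and $\NEvs_D$ yourself. Corollary~\ref{Cor:KLtwisting}(2) already records that $\pi(C,\mathcal{L}|_C)$ lies in a unique ABV-packet whenever $\overline{C}$ is smooth and $\mathcal{L}$ extends over $\overline{C}$; since $\eta_{\mathfrak{w},\mathfrak{w}'}^{-1}$ is a local system on all of $V$, this applies directly to $\mathcal{P}_{\mathfrak{w}'}(\pi)=\IC(C,\eta_{\mathfrak{w},\mathfrak{w}'}^{-1}|_C)$. Combined with item~(4) of the preceding proposition (ABV-packets are independent of the Whittaker datum), this disposes of~(2) without the step you flagged as delicate.
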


\begin{remark}
We will see later, in Theorem \ref{theorem: GI}, that, assuming Desiderata (\ref{desi2}) of Section \ref{ssec:LLC}, for classical groups if $C$ is open then $\pi(C,\1_C)$ is generic.
\end{remark}

\section{A geometric characterization of generic \texorpdfstring{$L$}{}-packets}\label{sec:genL}

In this section, we assume that $G$ is quasi-split and we show that tempered parameters are precisely those open parameters that are of Arthur type. We also show that $\phi$ is open if and only if the adjoint $L$-function $L(s,\phi,\Ad)$ is holomorphic at $s=1$. Taken together, these facts tell us that openness is an appropriate generalization of temperedness for Langlands parameters from the case of those of Arthur type.

\begin{definition}\label{definition:open}
Recall these basic definitions:
\begin{enumerate}
    \item
    We take the definition of tempered representations as given in \cite{wald}*{Prop III.2.2.}.
    A Langlands parameter $\phi$ is tempered if and only if its restriction to $W_F$ is bounded in $\dualgroup{G}$.
    As in Borel's desiderata, see \ref{desiderata}, it is expected that a Langlands parameter $\phi : W'_F \to \Lgroup{G}$ is \emph{tempered} if and only if its $L$-packet $\Pi_\phi(G)$ contains a tempered representation; in this case we say the $L$-packet itself is tempered. 
    \item
    A Langlands parameter $\phi : W'_F \to \Lgroup{G}$ is said to be of \emph{Arthur type} if there is an Arthur parameter $\psi : W''_F \to \Lgroup{G}$ such that 
    \[
    \phi(w,x) = \psi(w,x,d_w),
    \]
    where $d_w=\left(\begin{smallmatrix}|w|^{1/2}&\\ &|w|^{-1/2}\end{smallmatrix}\right)$.
    An irreducible representation $\pi$ is said to be of Arthur type if it appears in an $A$-packet.
    \item 
    An irreducible representation $\pi$ is said to be generic if it has a Whittaker model; see Definition~\ref{definition:generic representation}.
    A Langlands parameter $\phi$ is said to be generic if $\Pi_\phi(G)$ contains a generic representation; in this case we say the $L$-packet itself is generic.
\end{enumerate}
\end{definition}

\subsection{On tempered \texorpdfstring{$L$}{}-packets}

\begin{definition}[\cite{CFZ:unipotent}*{\S 0.6}]
    A Langlands parameter $\phi : W'_F \to \Lgroup{G}$ is said to be \emph{open} (resp. \emph{closed}) if the corresponding point $x_\phi \in V_\lambda$ lies in the open (resp. closed) $H_\lambda$-orbit in $V_\lambda$, where $\lambda$ is the infinitesimal parameter of $\phi$. 
\end{definition}

\begin{proposition}\label{prop: temper=arthur+open}
A Langlands parameter is tempered if and only if it is open and of Arthur type.
\end{proposition}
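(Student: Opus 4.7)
The plan is to prove both implications by combining Proposition~\ref{lemma:open} (which identifies the open orbit in $V_\lambda$ as Pyasetskii-dual to the closed orbit $\{0\}$ in $V_\lambda^*$) with \cite{CFMMX}*{Proposition 6.1.1} (which says that an Arthur parameter $\psi$ for $\phi$ produces a strongly regular pair $(x_\phi, y_\psi) \in \Lambda_{C_\phi}^{\mathrm{sreg}}$, where $y_\psi \ceq d\psi(1,1,f)$ is the nilpotent coming from the second $\SL_2$). The slogan is that temperedness and openness both translate into the vanishing $y_\psi = 0$.

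For the direction tempered $\Rightarrow$ open and of Arthur type: given $\phi$ tempered, I would first exhibit an Arthur parameter realizing it by setting $\psi(w,x,y) \ceq \phi(w,x)$; the boundedness of $\phi|_{W_F}$ makes this a legitimate Arthur parameter, and the identity $\phi_\psi(w,x) = \psi(w,x,d_w) = \phi(w,x)$ shows that $\phi$ is of Arthur type. Since this $\psi$ is trivial on $\SL_2^{\mathrm{A}}$, its associated nilpotent $y_\psi$ vanishes. Then \cite{CFMMX}*{Proposition 6.1.1} together with the inclusion $\Lambda_{C_\phi}^{\mathrm{reg}} \subseteq C_\phi \times C_\phi^*$ from \cite{CFMMX}*{Proposition 6.4.3} gives $0 \in C_\phi^*$, so $C_\phi^* = \{0\}$; Proposition~\ref{lemma:open} then yields that $C_\phi$ is the open orbit in $V_\lambda$, as required.

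For the converse, suppose $\phi$ is open and of Arthur type, coming from some Arthur parameter $\psi$. Again by \cite{CFMMX}*{Proposition 6.1.1}, $(x_\phi, y_\psi) \in \Lambda_{C_\phi}^{\mathrm{sreg}} \subseteq C_\phi \times C_\phi^*$, so $y_\psi \in C_\phi^*$; since $C_\phi$ is open, Proposition~\ref{lemma:open} forces $C_\phi^* = \{0\}$, hence $y_\psi = 0$. A short $\mathfrak{sl}_2$-theoretic argument concludes that $\psi|_{\SL_2^{\mathrm{A}}}$ is trivial: from $d\psi(f)=0$ and $[e,f]=h$ we get $d\psi(h) = 0$, and then $[h,e]=2e$ gives $d\psi(e) = 0$, so the differential—and thus the algebraic homomorphism $\psi|_{\SL_2^{\mathrm{A}}}$—is trivial. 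Consequently $\phi(w,x) = \psi(w,x,d_w) = \psi(w,x,1)$, and in particular $\phi|_{W_F} = \psi|_{W_F}$, which is bounded by the definition of Arthur parameter; hence $\phi$ is tempered.

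The main substance is therefore a dictionary between the two properties (temperedness, openness) and the vanishing of the $\SL_2^{\mathrm{A}}$-nilpotent, after which Proposition~\ref{lemma:open} and the strongly regular theory do all the geometric work. I do not anticipate any serious obstacle; the delicate step is merely to verify that when $\psi$ has trivial $\SL_2^{\mathrm{A}}$-component, the point $(x_\phi, 0)$ is indeed the one prescribed by \cite{CFMMX}*{Proposition 6.1.1}, which follows directly from the definitions of $x_\phi$ and $y_\psi$.
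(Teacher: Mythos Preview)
Your proposal is correct and follows essentially the same route as the paper's own proof: both directions hinge on translating temperedness/openness into the vanishing $y_\psi=0$ via Proposition~\ref{lemma:open} and the strongly regular pair produced in \cite{CFMMX}. The only cosmetic difference is that the paper cites \cite{CFMMX}*{Proposition 6.6.1} (rather than 6.1.1) for the strongly regular pair, and it deduces triviality of $\psi\vert_{\SL_2^{\mathrm{A}}}$ by invoking Jacobson--Morozov (via \cite{GR}*{Section 2}) in place of your direct $\mathfrak{sl}_2$ bracket computation; your argument is equally valid and arguably more transparent.
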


\begin{proof}
Suppose $\phi : W'_F \to \Lgroup{G}$ is tempered. 
Define $\psi: W''_F \to \Lgroup{G}$ by $\psi(w,x,y) = \phi(w,x)$.
Then the image of the restriction of $\psi$ to $W_F$ is the image of the restriction of $\phi$ to $W_F$, which is bounded since $\phi$ is tempered. 
Consequently, $\psi$ is an Arthur parameter. Since $\phi(w,x) = \psi(w,x,d_w)$, it follows that $\phi$ is of Arthur type.
To see that $\phi$ is open, note that $x_\phi= x_\psi \in V_\lambda$, where $x_\phi$ and $x_\psi$ are defined by the property $\exp x_\phi = \phi(1,e)$ \cite{CFMMX}*{Proposition 4.2.2} and $\exp x_\psi = \psi(1,e,1)$ \cite{CFMMX}*{Section 6.6}. 
By \cite{CFMMX}*{Proposition 6.6.1}, $(x_\psi,y_\psi) \in \Lambda_\lambda^\mathrm{sreg}$, where $y_\psi\in V^*_\lambda$ is defined by the property $\exp y_\psi = \psi(1,1,f)$. 
Since $\psi(1,1,f) = \phi(1,1) =1$, it follows that $y_\psi = 0$ and therefore that $C_\phi^* = \{ 0 \}$. 
By Proposition ~\ref{lemma:open} it follows that $C_\phi$ is the open orbit in $V_\lambda$. This completes the proof that $\phi$ is open if $\phi$ is tempered.

Now suppose $\phi$ is open and of Arthur type. 
Then $\phi(w,x) = \psi(w,x,d_w)$ for a unique Arthur parameter $\psi$; recall that, by definition, the image of the restriction of $\psi$ to $W_F$ is bounded in $\dualgroup{G}$.
Note that $x_\phi = x_\psi$ as above and also that $(x_\psi,y_\psi)\in \Lambda_\lambda^\mathrm{sreg}$, as above.
Again by Proposition ~\ref{lemma:open}, since $\phi$ is open it follows that $C_\phi^* = \{ 0 \}$ so $y_\psi =0$. 
If follows that $\psi(1,1,f) =1$. Now consider $\sigma \ceq \psi^\circ\vert_{\SL_2^{\hskip-1pt\mathrm{Art}}(\CC)} : \SL_2(\CC) \to \dualgroup{G}$ and note that $\sigma(f) =1$. Arguing as in \cite{GR}*{Section 2}, it follows from the Jacobson-Morozov theorem that $\sigma$ is determined uniquely by the $\SL_2$-triple $\sigma(e)$, $\sigma(h)$ and $\sigma(f)$. Since $\sigma(f)=1$, this triple is trivial, so $\sigma$ is trivial. 
It follows that $\phi(w,x) = \psi(w,x,y)$.
Since $\psi(w,1) = \psi(w,1,d_w)$ and since $\psi(w,1,d_w) = \psi(w,1,1)$ it follows that the image of the restriction of $\phi_\psi$ to $W_F$ in $\dualgroup{G}$ is equal to the image of the restriction of $\psi$ to $W_F$ in $\dualgroup{G}$, which is bounded. This concludes the proof that if $\psi$ is open and of Arthur type then $\phi$ is tempered.
\end{proof}

\begin{corollary} \label{vogantempered}
Let $G$ be an arbitrary connected reductive group. Vogan's conjecture is true for tempered Langlands parameters.
More generally, if $\phi$ is an open parameter then $\Pi_\phi^\ABV(G)$ is the $L$-packet $\Pi_\phi(G)$.
\end{corollary}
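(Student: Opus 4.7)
The plan is to derive both statements as nearly formal consequences of Proposition~\ref{prop: temper=arthur+open} together with the microlocal fact, recorded in \cite{CFMMX}*{Proposition 7.10}, that the $L$-packet and ABV-packet coincide for the open orbit. I would treat the open-parameter claim first and then deduce the tempered case.

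For the open-parameter statement, suppose $C_\phi$ is the open $H_\lambda$-orbit in $V_\lambda$. The inclusion $\Pi_\phi(G)\subseteq \Pi_\phi^\ABV(G)$ is built into the construction, so only the reverse containment needs attention. Fix a simple $(\pi,\delta)\in \Rep_\lambda(G/F)$ with enhanced parameter $(\phi',\rho')$, so that $\mathcal{P}_{\mathfrak{w}}(\pi,\delta)=\IC(C_{\phi'},\mathcal{L}_{\rho'})$. This IC sheaf is supported on $\overline{C_{\phi'}}$; since $C_\phi$ is open, $C_\phi\subseteq \overline{C_{\phi'}}$ forces $C_{\phi'}=C_\phi$, so whenever $\phi'\neq \phi$ the characteristic variety of $\IC(C_{\phi'},\mathcal{L}_{\rho'})$ has no component meeting $\Lambda_{C_\phi}^{\mathrm{gen}}$. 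This yields $\NEvs_{C_\phi}(\IC(C_{\phi'},\mathcal{L}_{\rho'}))=0$, which is precisely the content of \cite{CFMMX}*{Proposition 7.10}; hence $\Pi_\phi^\ABV(G)=\Pi_\phi(G)$.

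For the tempered case, let $\phi$ be tempered. By Proposition~\ref{prop: temper=arthur+open}, $\phi$ is open and of Arthur type, and inspection of the proof there shows that the associated Arthur parameter $\psi$ has trivial restriction to $\SL_2^{\hskip-1pt\mathrm{Art}}(\CC)$, so $\phi_\psi$ and $\psi$ carry the same data on $W_F\times \SL_2(\CC)$. Under the LLC desiderata of Section~\ref{ssec:LLC}, this forces the tempered $A$-packet $\Pi_\psi(G)$ to coincide with the tempered $L$-packet $\Pi_\phi(G)$. Combining with the previous paragraph gives $\Pi_\phi^\ABV(G)=\Pi_\phi(G)=\Pi_\psi(G)$, which is Vogan's conjecture for $\psi$.

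The main step requiring care is the vanishing $\NEvs_{C_\phi}(\IC(C_{\phi'},\mathcal{L}_{\rho'}))=0$ for $\phi'\neq \phi$ with $C_\phi$ open; however this is geometrically transparent from the support containment $\overline{C_{\phi'}}\not\supseteq C_\phi$ together with the identification of $\NEvs$ with a microlocal functor, and is already available as \cite{CFMMX}*{Proposition 7.10}. Everything else is bookkeeping against Proposition~\ref{prop: temper=arthur+open} and the LLC conventions for tempered parameters.
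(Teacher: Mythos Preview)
Your proof is correct and follows essentially the same route as the paper: both reduce to \cite{CFMMX}*{Proposition 7.10} (the paper also cites \cite{CFMMX}*{Theorem 7.22}(b)) for the fact that only sheaves supported on the open orbit contribute to $\NEvs_{C_\phi}$, and both invoke Proposition~\ref{prop: temper=arthur+open} for the tempered case. Your write-up is slightly more detailed than the paper's --- you spell out the support argument behind the vanishing and make explicit why $\Pi_\psi(G)=\Pi_\phi(G)$ when $\psi$ is trivial on the Arthur $\SL_2$ --- but the underlying logic is identical.
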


\begin{proof}
Let $\phi$ be a tempered parameter, by Proposition \ref{prop: temper=arthur+open}, it is open and of Arthur type. As a consequence of \cite{CFMMX}*{Proposition 7.10} or \cite{CFMMX}*{Theorem 7.22} (b), only the local systems on the open orbit $C_\phi$ contribute to the $\Pi^{\ABV}_\phi$. Therefore, it is equivalent to $\Pi_\phi(G)$. 
\end{proof}

\begin{remark} 
Langlands parameters that are open need not be tempered nor of Arthur type.
For example, if $G = \GL_1$ over $F$ and $\phi : W'_F \to \Lgroup{G}$ is defined by $\phi^\circ (w,x) = \abs{w}$, then $\phi$ is open, not tempered, not of Arthur type.
\end{remark}

\subsection{On generic \texorpdfstring{$L$}{}-packets}

Let $\phi: W_F\times \SL_2(\C)\to \Lgroup{G}$ be a local Langlands parameter and let $\Ad:\Lgroup{G}\ra \Aut({\hat \fg})$ be the adjoint representation of $\Lgroup{G}$ on the Lie algebra of the dual group of $G$.
The corresponding local $L$-factor is 
\[
L(s,\phi,\Ad)=\det(I-q^{-s}\Ad(\lambda(\Fr))|_{\wh \fg^{I_F}_N})^{-1},
\]
where $\lambda:=\lambda_\phi$ (see \eqref{eq-infinitesimal}) is the infinitesimal parameter of $\phi$, $N=d\phi\bspm 0&1\\ 0 &0 \espm$ and $\wh\fg_{N}=\ker(N)$ and $\fg_N^{I_F}$ is the $I_F$-invariant subspace of $\wh\fg_N$.  See \cite{GR} for more details about the definition of local $L$-factors. Here we notice that the Frobenius element in \cite{GR} is the inverse of our Frobenius element in \cite{CFMMX}.

%\todo{I think it would be worth doing the exercise to show that this formula is correct. Here I'm referring specifically to the appearance of the infinitesimal parameter.}

\begin{proposition}\label{prop: key}
For any Langlands parameter $\phi$, the adjoint $L$-function $L(s,\phi,\Ad)$ is regular at $s=1$ if and only if $\phi$ is open. 
\end{proposition}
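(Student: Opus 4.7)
The plan is to reduce both conditions to the same linear-algebraic statement, namely that $V_\lambda^* \cap \ker(\ad x_\phi) = 0$.

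First I would unpack the $L$-function side directly from the definition. A pole of $L(s,\phi,\Ad) = \det(I - q^{-s}\Ad(\lambda(\Fr))|_{\wh\fg_N^{I_F}})^{-1}$ at $s=1$ is equivalent to $q$ being an eigenvalue of $\Ad(\lambda(\Fr))$ on $\wh\fg_N^{I_F} = (\ker \ad N)^{I_F}$. Because $|\Fr| = q^{-1}$ in the convention used here, the defining relation $\Ad(\lambda(w))y = |w|^{-1}y$ for $V_\lambda^*$ shows that $V_\lambda^*$ is exactly the $q$-eigenspace of $\Ad(\lambda(\Fr))$ on $\wh\fg^{I_F}$. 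Since $N = x_\phi$, this yields
\[
L(s,\phi,\Ad) \text{ is regular at } s=1 \iff V_\lambda^* \cap \ker(\ad x_\phi) = 0.
\]

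Next I would unpack the openness side via the conormal bundle. By \cite{CFMMX}*{Proposition 6.3.1}, $\Lambda_{C_\phi} = \{(x,y) \in C_\phi \times V_\lambda^* : [x,y] = 0\}$ is the conormal bundle to $V_\lambda$ above $C_\phi$, so the fiber of $p : \Lambda_{C_\phi} \to C_\phi$ at $x_\phi$ is exactly $V_\lambda^* \cap \ker(\ad x_\phi)$. As a conormal bundle over a smooth orbit this fiber has constant dimension $\codim_{V_\lambda}(C_\phi)$; Lemma \ref{lemma:dim} confirms this, giving $\dim(\text{fiber}) = \dim V_\lambda - \dim C_\phi$. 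Since $V_\lambda$ is a prehomogeneous vector space with a unique open orbit, $\phi$ is open if and only if $\codim C_\phi = 0$, if and only if this fiber vanishes. Combined with the previous paragraph, this gives the proposition.

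There is no serious obstacle; the argument is essentially bookkeeping and an application of Lemma \ref{lemma:dim}. The one subtle point to get right is the eigenvalue convention: the pole at $s=1$ is detected by the eigenvalue $q$ rather than $q^{-1}$, which is why $V_\lambda^*$ rather than $V_\lambda$ is the relevant weight space. As a sanity check one can verify everything via the Killing form: $\Ad$-invariance of $\kappa$ implies that, up to a sign, $\ad(x_\phi): V_\lambda^* \to \Lie(H_\lambda)$ is transpose to $\ad(x_\phi): \Lie(H_\lambda) \to V_\lambda$ under the non-degenerate pairings $V_\lambda \times V_\lambda^* \to \mathbb{A}^1$ and $\Lie(H_\lambda) \times \Lie(H_\lambda) \to \mathbb{A}^1$; since the image of the second map is $T_{x_\phi}C_\phi$, its surjectivity (i.e., openness of $C_\phi$) is equivalent to injectivity of the first (i.e., $V_\lambda^* \cap \ker(\ad x_\phi) = 0$), which recovers the same equivalence.
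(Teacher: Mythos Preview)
Your proof is correct and follows essentially the same reduction as the paper: both identify the pole condition with the nonvanishing of the conormal fiber $\{y\in V_\lambda^* : [x_\phi,y]=0\}$. For the equivalence with openness, the paper splits into the cases $N=0$ and $N\neq 0$ and in the latter invokes Pyasetskii duality via Proposition~\ref{lemma:open} (arguing that $\mathrm{pr}_2:\Lambda_C\to V_\lambda^*$ is zero forces $C^*=\{0\}$), whereas you go directly through the conormal-fiber dimension via Lemma~\ref{lemma:dim}; your route avoids the case split and treats both implications symmetrically, while the paper's route makes the connection to orbit duality more explicit.
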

\begin{proof}
Write $C=C_\phi$ in the following. Note that $N\in C$ and $L(s,\phi,\Ad)$ has a pole at $s=1$ if and only if there exists a nonzero element $y\in \wh \fg_N^{I_F}$ such that $\Ad(\lambda(\Fr)) y =q y$, meaning $y\in V_\lambda^*$ and $[N,y]=0$ so $(N,y)\in \Lambda_{C}$.
Equivalently, $L(s,\phi,\Ad)$ is regular at $s=1$ if and only if $\{y\in V_{\lambda}^*\tq (N,y)\in \Lambda_C\}=\{0\}.$

We first assume that $N=0$, i.e., $\phi(w,x)=\lambda(w)$. Then $L(s,\phi,\Ad)$ is regular at $s=1$ if and only if $V_\lambda^*=\{0\}$. The latter condition is equivalent to $V_\lambda=\{0\}$, in which case $\phi=\lambda$ is trivially open in $V_\lambda$.

Next, we assume that $N\ne 0$. Since $H_\lambda.N=C$ and $[~,~]$ is invariant under the $H_\lambda$-action, it follows that $L(s,\phi,\Ad)$ is regular at $s=1$ if and only if the projection $\mathrm{pr}_2: \Lambda_C\ra V_{\lambda}^*$ is zero, which implies $C^*=\{0\}$, hence $C$ is open. 
%
%Here is another argument of the last part. The condition $\{y\in V_{\lambda}^*\tq (N,y)\in \Lambda_C\}=\{0\}$ says that $\Lambda_C=C\times \{0\}$ (since $H_\lambda.N=C$ and $[~,~]$ is invariant under the $H_\lambda$-action). Thus $\dim \Lambda_C=\dim V_\lambda$ implies that $C$ must be open in $V_\lambda$.
\end{proof}

We recall the following conjecture of Gross-Prasad and Rallis.
\begin{conjecture}[{\cite{GP}*{Conjecture 2.6}}]\label{conj: GP} 
An $L$-packet $\Pi_\phi(G)$ is generic if and only if $L(s,\phi,\Ad)$ is regular at $s=1$.
\end{conjecture}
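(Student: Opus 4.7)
The plan is to use Proposition~\ref{prop: key}, which identifies regularity of $L(s,\phi,\Ad)$ at $s=1$ with the condition that $\phi$ is open in $V_\lambda$, and thereby reduce the conjecture to the geometric equivalence: $\Pi_\phi(G)$ contains a generic representation if and only if $C_\phi$ is the open $H_\lambda$-orbit of $V_\lambda$. The two directions are then handled with different tools.

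For the forward direction, take $\pi \in \Pi_\phi(G)$ generic and write it via Langlands classification as the Langlands quotient of a standard module parabolically induced from a tempered representation $\sigma$ of a Levi subgroup $M$ twisted by a strictly positive unramified character. The standard module conjecture of Casselman-Shahidi, known for quasi-split classical groups and conjectural in general, asserts that this standard module is irreducible; a classical argument of Rodier then forces $\sigma$ itself to be generic. Applying Shahidi's generic packet conjecture on $M$, the parameter $\phi_M$ of $\sigma$ is tempered, hence open in $V_{\lambda_M}$ by Proposition~\ref{prop: temper=arthur+open}. The inheritance of openness under the inclusion $\Lgroup{M} \hookrightarrow \Lgroup{G}$ established in Section~\ref{sec:discrete} then lifts openness to $\phi$ in $V_\lambda$.

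For the reverse direction, take $\phi$ open and fix a Whittaker datum $\mathfrak{w}$. By Desideratum~(\ref{desi2}) of Section~\ref{ssec:LLC}, the only candidate generic member of $\Pi_\phi(G)$ is $\iota_\mathfrak{w}(\1)$, corresponding under~\eqref{llc:map} to $\IC(C_\phi, \1_{C_\phi})$ on the open orbit. What must be verified is that this representation is truly generic rather than simply absent from the packet. For tempered $\phi$ this is Shahidi's generic packet conjecture; for open $\phi$ of Arthur type it follows from the tempered case via Proposition~\ref{prop: temper=arthur+open}. In general, I would reverse the Langlands classification: use Shahidi's conjecture on the Levi $M$ to produce a tempered generic $\sigma$ with parameter $\phi_M$, induce parabolically, and invoke Rodier's theorem together with LLC compatibility with parabolic induction to conclude that the resulting irreducible constituent is a generic representation of $G(F)$ with Langlands parameter $\phi$; by uniqueness in Desideratum~(\ref{desi2}), it must then coincide with $\iota_\mathfrak{w}(\1)$.

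The main obstacle will be this reverse direction in full generality, particularly when neither Shahidi's generic packet conjecture nor Arthur's classification is yet known on the Levi $M$. In such cases, I expect the $p$-adic Kazhdan-Lusztig hypothesis of Section~\ref{ssec:KL} to become essential, used to track the multiplicity of $\iota_\mathfrak{w}(\1)$ across all standard modules attached to orbits in $\overline{C_\phi} = V_\lambda$, as organized by Corollary~\ref{cor: generic appear in every standard module}. The Pyasetskii duality of Proposition~\ref{lemma:open}, identifying the open orbit in $V_\lambda$ with the zero orbit in $V_\lambda^*$, is precisely what makes the adjoint $L$-function computation of Proposition~\ref{prop: key} align with this geometric multiplicity picture, and this alignment is what I would ultimately build upon to handle the general case.
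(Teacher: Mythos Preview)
This statement is labeled \emph{Conjecture} in the paper for good reason: the paper does not prove it in full generality. What the paper actually establishes is (i) the equivalence with the open-orbit condition via Proposition~\ref{prop: key}, (ii) the forward implication (generic $\Rightarrow$ regular at $s=1$) \emph{conditionally on the Kazhdan--Lusztig Hypothesis}, in Theorem~\ref{prop: GP}, and (iii) both directions for classical groups by citing Gan--Ichino (Theorem~\ref{theorem: GI}). The reverse implication is not attacked directly beyond the classical case. Your proposal therefore attempts more than the paper does; this should frame your expectations.

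For the forward direction, your route and the paper's diverge. The paper argues by contrapositive: if $C_\phi$ is not open, Kazhdan--Lusztig and Corollary~\ref{cor: generic appear in every standard module} force $\pi(C^o,\1)$ to occur in $M_\pi$, whence $M_\pi$ is reducible, whence $\pi$ is not generic by the standard module conjecture \cite{opdamh}. Your route instead runs Langlands classification forward, passes genericity down to the tempered inducing datum $\sigma$ on $M$ via Rodier, and then invokes openness-under-embedding from Section~\ref{sec:discrete}. Two problems. First, a minor one: you invoke ``Shahidi's generic packet conjecture on $M$'' to deduce $\phi_M$ tempered, but $\sigma$ is already tempered by Langlands classification, so $\phi_\sigma$ is tempered by the LLC desiderata---Shahidi's conjecture is not what is needed here. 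Second, and seriously: the Langlands parameter of $\pi$ is not the embedding of $\phi_\sigma$ but of $\phi_\sigma$ twisted by the strictly positive unramified $\chi$. The twist is central in $\widehat{M}$, so openness on $M$ survives, but $\chi$ is \emph{not} central in $\widehat{G}$, so $V_\lambda^G$ can be strictly larger than the image of $V_{\lambda_M}^M$. The Section~\ref{sec:discrete} inheritance arguments rely on the segment combinatorics being nested or disjoint (as they are for tempered inducing data with $\chi$ trivial); once $\chi$ is strictly positive the segments shift and this structure is lost. Concretely, for $G=\GL_2$, $M=T$, $\sigma=\1$, $\chi=(\abs{\cdot}^{1/2},\abs{\cdot}^{-1/2})$, the parameter is open on $T$ but closed on $G$. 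Of course in that example the standard module is reducible, so $\pi$ is not generic---which is exactly the point: the irreducibility coming from genericity is the essential constraint, but your argument does not use it in the embedding step. Making this work would require showing that irreducibility of $\text{Ind}_P^G(\sigma\otimes\chi)$ forces openness of the embedded parameter, and that is essentially the content of the forward implication itself.

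For the reverse direction, you correctly flag it as the harder one and assemble the right ingredients (Rodier, LLC compatibility with parabolic induction, Desideratum~(\ref{desi2})). But note that the paper does not claim to prove this direction outside the classical case; your sketch here is a reasonable program, not a proof, and it ultimately rests on Shahidi's generic packet conjecture for Levis and compatibilities that are themselves conjectural in general.
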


 As a consequence of Proposition \ref{prop: key}, Conjecture \ref{conj: GP} is equivalent to the following conjectural geometric characterization of generic $L$-packets.
\begin{conjecture}\label{conj: GP2}
An $L$-packet $\Pi_\phi(G)$ is generic if and only if $\phi$ is an open parameter.
\end{conjecture}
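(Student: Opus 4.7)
The plan is to reduce Conjecture~\ref{conj: GP2} immediately to the Gross-Prasad-Rallis Conjecture~\ref{conj: GP} by invoking Proposition~\ref{prop: key}. Proposition~\ref{prop: key} establishes that $L(s,\phi,\Ad)$ is regular at $s=1$ if and only if $\phi$ is open, so substituting this geometric condition into the statement ``$\Pi_\phi(G)$ is generic iff $L(s,\phi,\Ad)$ is regular at $s=1$'' yields exactly ``$\Pi_\phi(G)$ is generic iff $\phi$ is open.'' The logical content is a single chain of equivalences, and no independent argument is required beyond Proposition~\ref{prop: key} combined with the Gross-Prasad-Rallis conjecture.

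Consequently, the forward direction of actually verifying Conjecture~\ref{conj: GP2} amounts to establishing Conjecture~\ref{conj: GP} in as general a setting as possible. For quasi-split classical groups over $p$-adic fields, the Gross-Prasad-Rallis conjecture is accessible through the explicit construction of $L$-packets already referenced in Section~\ref{ssec:LLC}, which settles Conjecture~\ref{conj: GP2} unconditionally in those cases. For arbitrary quasi-split connected reductive groups, one takes Conjecture~\ref{conj: GP} as a working hypothesis, and the geometric characterization via openness then follows by the same reduction. It is worth checking that the two implications in Proposition~\ref{prop: key} do not rely on any choice of Whittaker datum or on any part of the local Langlands correspondence beyond the association $\phi\mapsto C_\phi$, so no additional desiderata need be invoked when performing the translation.

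The main obstacle is not the geometric step, which is essentially the content of Proposition~\ref{prop: key}, but the depth of Conjecture~\ref{conj: GP} itself. The value of Conjecture~\ref{conj: GP2} is thus conceptual: it reformulates the Gross-Prasad-Rallis genericity criterion in purely geometric terms on the Vogan variety $V_\lambda$, asserting that genericity of an $L$-packet is detected by openness of the corresponding $H_\lambda$-orbit. This reformulation is the natural stepping-stone toward Conjecture~\ref{our conjecture} for ABV-packets, where the criterion is upgraded from $L$-packets to the larger ABV-packets; the main theorems of the paper then address that stronger statement either unconditionally for classical groups or conditionally on the $p$-adic Kazhdan-Lusztig hypothesis and the local Langlands desiderata.
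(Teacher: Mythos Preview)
Your proposal is correct and matches the paper's treatment exactly: the paper does not prove Conjecture~\ref{conj: GP2} as a standalone theorem but rather introduces it as the geometric reformulation of Conjecture~\ref{conj: GP}, noting in the sentence preceding the statement that the two conjectures are equivalent as a consequence of Proposition~\ref{prop: key}. Your additional remarks about unconditional verification for classical groups via Gan--Ichino and the independence of Proposition~\ref{prop: key} from Whittaker data are accurate elaborations of what the paper records in Theorem~\ref{theorem: GI} and Corollary~\ref{corollary: GI}.
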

Conjecture \ref{conj: GP} has been verified in many cases in the literature. See \cites{Jiang-Soudry, Liu, JL} for some examples. The connection between genericity and openness seems to be known for experts. For unipotent representations, Conjecture \ref{conj: GP2} has been conjectured in \cite{Reeder} and proved in \cite{Reedergeneric}. In the general case, it also appeared as a conjecture in \cite{Solleveld-open}*{Conjecture B}. The most general case of Conjecture \ref{conj: GP} proved in the literature is the following
\begin{theorem}[Theorem B.2, \cite{GI}]\label{theorem: GI}
If $G$ is a classical group (including $GL_n$), then Conjecture $\ref{conj: GP}$ is true.
\end{theorem}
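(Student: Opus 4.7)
The plan is to leverage the explicit local Langlands correspondence for classical groups (established by Arthur, Mok, and Kaletha--Minguez--Shin--White) together with the standard module conjecture, to reduce both sides of the claimed equivalence to explicit combinatorial conditions on the parameter $\phi$. First, by Langlands classification, I would write $\phi = \phi_M \otimes \nu$, where $\phi_M$ is a tempered Langlands parameter of a Levi subgroup $M$ of $G$ and $\nu$ is a strictly positive unramified character of $M$. Under the local Langlands correspondence, every element of $\Pi_\phi(G)$ arises as the Langlands quotient $J(\tau \otimes \nu)$ of a standard module $\mathrm{Ind}_P^G(\tau \otimes \nu)$, as $\tau$ ranges over the tempered $L$-packet $\Pi_{\phi_M}(M)$.

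Next I would invoke Shahidi's tempered generic packet conjecture, known for classical groups over $p$-adic fields, to guarantee that $\Pi_{\phi_M}(M)$ contains a generic representation $\tau_0$. For such $\tau_0$, the Langlands quotient $J(\tau_0 \otimes \nu)$ is itself generic if and only if the induced standard module $\mathrm{Ind}_P^G(\tau_0 \otimes \nu)$ is irreducible; this is the standard module conjecture, established for classical groups by Mui\'c and Heiermann--Mui\'c. Since genericity is preserved by isomorphism classes within a packet, $\Pi_\phi(G)$ contains a generic representation if and only if this specific standard module is irreducible.

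Then, via the Casselman--Shahidi analysis of standard intertwining operators, the irreducibility of $\mathrm{Ind}_P^G(\tau_0 \otimes \nu)$ translates to the regularity and non-vanishing of certain ratios of local $L$-factors (through Shahidi's local coefficients). For classical groups these $L$-factors are explicit Rankin--Selberg, symmetric square, or exterior square $L$-functions, computed directly from the decomposition of $\phi_M$ into irreducible constituents. Finally, I would compute $L(s,\phi,\Ad)$ from scratch: the adjoint representation of $\Lgroup{G}$ restricted through $\phi$ decomposes canonically into pieces involving $\mathrm{Sym}^2$, $\wedge^2$, and tensor products of the standard representation, yielding an explicit factorization of $L(s,\phi,\Ad)$ in terms of the same Rankin--Selberg, symmetric square, and exterior square factors. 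A term-by-term comparison, using the holomorphy of $L$-functions of tempered parameters in the right half plane, should show that $L(s,\phi,\Ad)$ has a pole at $s=1$ precisely when one of the reducibility-inducing poles occurs.

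The main obstacle is this final matching step: carefully bookkeeping which constituents of $\phi_M$ are self-dual of orthogonal versus symplectic type, which appear in complementary pairs $\sigma \oplus \sigma^\vee$, and how each configuration contributes to both the Casselman--Shahidi reducibility criterion at the specific twist $\nu$ and to the pole structure of $L(s,\phi,\Ad)$ at $s=1$. One must also handle the distinction between symplectic, special orthogonal, unitary, and general linear cases uniformly. Once the combinatorial dictionary is set up correctly, both conditions reduce to the same statement: no irreducible constituent of $\phi_M$ should pair with $\nu$ to produce a pole in the appropriate auxiliary $L$-function at $s=1$, which is precisely the content of Proposition~\ref{prop: key} made explicit for classical groups.
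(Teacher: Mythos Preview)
Your outline is essentially correct and reconstructs the Gan--Ichino argument that the paper cites. Note, however, that the paper does not supply its own proof of this theorem: it simply attributes the result to \cite{GI}*{Theorem B.2} and remarks in the sentence following the statement that Gan--Ichino in fact prove Conjecture~\ref{conj: GP} for any reductive group under certain hypotheses on the local Langlands correspondence (those of \cite{GI}*{Section B.2}), which are verified for classical groups. So there is nothing in the paper to compare against beyond that one-line reference.

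Your reconstruction matches the shape of Gan--Ichino's actual argument: Langlands classification reduces to a tempered parameter on a Levi, the tempered generic packet conjecture supplies a generic $\tau_0$, the standard module conjecture (the paper cites \cite{opdamh} rather than Heiermann--Mui\'c for its general form) converts genericity of the Langlands quotient into irreducibility of the standard module, and the remaining content is the explicit matching of reducibility-inducing poles with the pole of $L(s,\phi,\Ad)$ at $s=1$. One small correction: the irreducibility criterion you invoke via ``Casselman--Shahidi analysis of standard intertwining operators'' is not quite how Gan--Ichino phrase it; they work directly with the explicit decomposition of $\phi$ and the resulting factorization of the adjoint $L$-function, and the reducibility side is handled through known results on reducibility points for classical groups rather than through local coefficients per se. But the combinatorial matching you describe in your final paragraph is indeed the heart of the matter, and your identification of the main obstacle is accurate.
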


In fact, Gan-Ichino \cite{GI} proved Conjecture \ref{conj: GP} for any reductive group under certain hypothesis on local Langlands correspondence, see \cite{GI}*{Section B.2},  which are known to be true for classical groups (including the general linear groups).

\begin{corollary}\label{corollary: GI}
If $G$ is a classical group (including $GL_n$), then  \ref{conj: GP2} is valid true: $\phi$ is generic if and only if $\phi$ is open.
\end{corollary}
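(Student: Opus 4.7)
The plan is to simply chain together two results already established in the excerpt: Theorem~\ref{theorem: GI} (the Gan--Ichino theorem verifying Conjecture~\ref{conj: GP} for classical groups) and Proposition~\ref{prop: key} (the geometric characterization of the regularity of $L(s,\phi,\Ad)$ at $s=1$ via openness of $\phi$). Since each of these is a biconditional, their composition gives the desired equivalence.

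More concretely, I would first invoke Theorem~\ref{theorem: GI} to conclude that, for a classical group $G$ (including $\GL_n$) and any Langlands parameter $\phi$ of $G$, the $L$-packet $\Pi_\phi(G)$ contains a generic representation if and only if the adjoint $L$-function $L(s,\phi,\Ad)$ is regular at $s=1$. This is the content of Conjecture~\ref{conj: GP} in the classical case. Next, applying Proposition~\ref{prop: key} (which holds for any Langlands parameter of any quasi-split reductive $G$), the regularity of $L(s,\phi,\Ad)$ at $s=1$ is equivalent to $\phi$ being open in the sense that $x_\phi \in V_\lambda$ lies in the open $H_\lambda$-orbit, where $\lambda$ is the infinitesimal parameter of $\phi$. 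Combining the two equivalences yields the statement of the corollary.

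There is no essential obstacle: the two ingredients needed have already been assembled. The only thing to double-check is consistency of conventions, in particular that the definition of genericity of $\phi$ used in Definition~\ref{definition:open} (namely, that $\Pi_\phi(G)$ contains a generic representation) is the same notion used in Conjecture~\ref{conj: GP} and in Theorem~\ref{theorem: GI}; this is immediate. Thus the proof reduces to a short paragraph citing Theorem~\ref{theorem: GI} and Proposition~\ref{prop: key} in succession.
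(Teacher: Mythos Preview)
Your proposal is correct and matches the paper's approach exactly: the corollary is an immediate consequence of combining Theorem~\ref{theorem: GI} (Gan--Ichino's verification of Conjecture~\ref{conj: GP} for classical groups) with Proposition~\ref{prop: key} (the equivalence between regularity of $L(s,\phi,\Ad)$ at $s=1$ and openness of $\phi$). The paper treats this as self-evident and does not spell out a separate proof.
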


For the exceptional group $G_2$, one can also check that Conjectures \ref{conj: GP} and thus \ref{conj: GP2} hold directly for unramified parameter $\phi$ using \cites{CFZ:cubic, CFZ:unipotent} and for general $\phi$ using the recently proved local Langlands correspondence for $G_2$ in \cites{Aubert-Xu, Gan-Savin}.

For more general groups, we show that at least one direction of Gross-Prasad \& Rallis' conjecture follows from the Kazhdan-Lusztig hypothesis easily.

\begin{theorem}\label{prop: GP}
Assume the Kazhdan-Lusztig Hypothesis \ref{hypothesis}. 
Let $\phi$ be a generic local Langlands parameter with infinitesimal parameter $\lambda$.
Then $\phi$ is open in $V_\lambda$ and thus $L(s,\phi,\Ad)$ is regular at $s=1$.
\end{theorem}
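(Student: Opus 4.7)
The plan is to use the Kazhdan-Lusztig Hypothesis~\ref{hypothesis} to transfer information carried by the generic representation in $\Pi_\phi$ into a geometric constraint on the IC sheaf $\IC(C_\phi, \1_{C_\phi})$, then exploit the geometry of $V_\lambda$ to force $C_\phi$ to be the open orbit, and finally apply Proposition~\ref{prop: key} to deduce the statement on $L(s,\phi,\Ad)$. Concretely, by Desideratum~(\ref{desi2}) of Section~\ref{ssec:LLC}, the (unique) generic representation $\pi \in \Pi_\phi$ corresponds under the Whittaker normalization~\eqref{eq: LLC} to the trivial character of $A_\phi$, and hence under the Langlands-Vogan bijection $\mathcal{P}_{\mathfrak{w}}$ of~\eqref{llc:map} is matched with the simple perverse sheaf $\IC(C_\phi, \1_{C_\phi})$.

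Next, I would invoke the classical standard-module theorem for generic representations (the $p$-adic Casselman-Shahidi conjecture, established via Rodier's exactness of the Whittaker functor in the cases we require): the standard module $M(\pi)$ is irreducible, so $M(\pi) = \pi$ in $K\Rep_\lambda(G/F)$. Applying the Kazhdan-Lusztig Hypothesis $m_\lambda = {}^t c_\lambda$, this irreducibility translates into the cleanness condition
\[
\rank \Hom\bigl(\IC(C_\phi, \1_{C_\phi})|_{C_{\phi_i}},\, \mathcal{L}_{\rho_i}[\dim C_{\phi_i}]\bigr) = 0
\]
for every $(C_{\phi_i}, \mathcal{L}_{\rho_i}) \neq (C_\phi, \1_{C_\phi})$; that is, the IC sheaf $\IC(C_\phi, \1_{C_\phi})$ contributes no nontrivial piece in the critical cohomological degree on any smaller stratum.

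To finish, I would use this cleanness to show that $C_\phi$ must be the open orbit. The cleanest route is to apply Proposition~\ref{proposition:twisting} and Corollary~\ref{Cor:KLtwisting} to propagate irreducibility from $M(\pi)$ to every $M(\pi')$ with $\pi' \in \Pi_\phi$ (i.e., to every character $\rho'$ of $A_\phi$ on $C_\phi$), and then invoke the proposition in Section~\ref{ssec:geometric} (following Corollary~\ref{Cor:KLtwisting}) which characterizes the open orbit as the unique orbit whose entire $L$-packet consists of representations with irreducible standard modules; this forces $C_\phi = C^o$. A final appeal to Proposition~\ref{prop: key} then yields the regularity of $L(s,\phi,\Ad)$ at $s=1$. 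The main obstacle is precisely this last step: propagating cleanness from the trivial local system $\1_{C_\phi}$ to all characters of $A_\phi$ on the same orbit requires either sufficient regularity of $\overline{C_\phi}$ for the twisting machinery of Proposition~\ref{proposition:twisting} to apply, or, as an alternative, a direct stalk computation showing that a non-open $C_\phi$ inevitably produces a nonzero contribution at some smaller orbit in the prescribed degree, contradicting cleanness.
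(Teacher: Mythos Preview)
Your setup is exactly right: via Desideratum~(\ref{desi2}) the generic $\pi$ corresponds to $\IC(C_\phi,\1_{C_\phi})$, and the standard-module conjecture of Casselman--Shahidi (proved by Heiermann--Opdam) gives $M(\pi)=\pi$. The gap is in the final step. Your main route---propagating irreducibility from $M(\pi)$ to every $M(\pi')$ with $\pi'\in\Pi_\phi$ so as to invoke the open-orbit characterization---genuinely requires the twisting machinery of Proposition~\ref{proposition:twisting}, and that in turn needs either $\overline{C_\phi}$ smooth or a local system extending over $\overline{C_\phi}$. Neither is available for an arbitrary $C_\phi$, and you correctly flag this as the obstacle. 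As stated, the argument does not close.

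The paper's proof sidesteps this by reversing the direction of the geometric input: instead of analysing stalks of $\IC(C_\phi,\1)$ on smaller strata, it applies Corollary~\ref{Cor:KLtwisting} to the \emph{open} orbit $C^o$, whose closure $\overline{C^o}=V_\lambda$ is automatically smooth. Then $\IC(C^o,\1)=\1_{V_\lambda}[\dim V_\lambda]$ is the constant perverse sheaf, and Corollary~\ref{Cor:KLtwisting}(1) (equivalently Corollary~\ref{cor: generic appear in every standard module}(3)) gives directly that $\pi(C^o,\1)$ occurs in $M_{\pi(D,\1)}$ for \emph{every} orbit $D$, in particular for $D=C_\phi$. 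If $C_\phi\neq C^o$ this makes $M(\pi)=M_{\pi(C_\phi,\1)}$ reducible, contradicting the standard-module conjecture. No propagation across the $L$-packet is needed: the argument already works for the single representation $\pi=\pi(C_\phi,\1)$. This is in fact your ``alternative'' made concrete---the relevant nonzero stalk contribution comes not from $\IC(C_\phi,\1)$ but from the explicitly computable $\IC(C^o,\1)$, and that is what makes the obstacle evaporate.
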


\begin{proof}
Let $\lambda$ be the corresponding infinitesimal parameter of $\phi$ and $C\subset V_\lambda$ be the orbit corresponding to $\phi$. Assume that $C$ is not open in $V_\lambda$. If $\Pi_\phi(G)$ contains a generic representation $\pi$, then the Desiderata in Section~\ref{ssec:LLC} says that there exists a bijection $\Pi_\phi\cong \wh A_\phi$ such that $\pi\leftrightarrow 1$. Let $M_\pi$ be the standard module of $\pi$. Let $C^o$ be the open orbit in $V_\lambda$ with Langlands parameter $\phi^o$, and let $\pi(\phi^o)$ be the representation in $\Pi_{\phi^o}$ which corresponds to $1\in \wh {A_{\phi^o}}$. By Kazhdan-Lusztig conjecture and Corollary \ref{cor: generic appear in every standard module}, we have
$$\pair{M_\pi,\pi(\phi^o)}\ne 0.$$
Thus $M_\pi$ is reducible. By the standard module conjecture \cite{CS}, which was proved in \cite{opdamh}, $\pi$ is not generic.  Thus if $\phi$ is not open, $\Pi_\phi(G)$ does not contain generic representations.
\end{proof}

\subsection{Some other consequences of Kazhdan-Lusztig Hypothesis}
As was shown in Theorem~\ref{prop: GP}, one direction of Conjecture \ref{conj: GP} follows from Kazhdan-Lusztig Hypothesis \ref{hypothesis}, in particular, Corollary \ref{cor: generic appear in every standard module}. In this subsection, we record several other immediate representation theoretic consequences on Kazhdan-Lusztig hypothesis after Corollary \ref{cor: generic appear in every standard module}. Even though they might be well-known unconditionally, we feel like it is probably worth keeping them here to illustrate the power of the geometric approach.

\begin{corollary}
Assume the Kazhdan-Lusztig Hypothesis \ref{hypothesis} and the local Langlands correspondence desiderata in Section~\ref{ssec:LLC} for $G$. Let $\pi$ be a supercupidal generic representation of $G(F)$ and let $\phi=\phi_\pi: W_F\times \SL_2(\C)\to \Lgroup{G}$ be its local Langlands parameter, then $\phi_\pi|_{\SL_2(\C)}=1.$
\end{corollary}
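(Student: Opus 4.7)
The plan is to argue by contradiction, using Proposition \ref{prop: temper=arthur+open} to place $\phi$ at the open orbit, and then using the KL-driven Corollary \ref{cor: generic appear in every standard module} to produce a parabolic induction contradicting supercuspidality. Since a supercuspidal representation is tempered, $\phi$ is a tempered Langlands parameter, hence open in $V_\lambda$: the orbit $C_\phi$ is the open $H_\lambda$-orbit. By Desiderata~(\ref{desi2}) of Section~\ref{ssec:LLC}, the generic representation $\pi$ in $\Pi_\phi(G)$ corresponds to the trivial character of $A_\phi$, and so $\pi = \pi(C_\phi, \1_{C_\phi})$ in the given Whittaker normalization.

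Now suppose for contradiction that $\sigma \ceq \phi|_{\SL_2(\C)}$ is nontrivial. Then the nilpotent element $N = d\phi\bspm 0&1\\0&0\espm \in V_\lambda$ is nonzero, so the closed orbit $\{0\}\subset V_\lambda$ is distinct from $C_\phi$. The Langlands parameter attached to the closed orbit $D=\{0\}$ is the one with trivial $\SL_2(\C)$-restriction, namely $\phi_D(w,x) = \lambda(w)$. I claim $\phi_D$ is not tempered: writing $\lambda(w) = \phi(w, d_w)$ with $d_w = \diag(|w|^{1/2},|w|^{-1/2})$ as in \eqref{eq-infinitesimal}, the temperedness of $\phi$ forces $\phi(w,1)$ to remain bounded in $\dualgroup{G}$, while $\sigma(d_w)$ is unbounded because $\sigma$ is a nontrivial algebraic representation of $\SL_2(\C)$ and $|w|$ ranges over an unbounded subset of $q^{\Z}$. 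Hence $\lambda|_{W_F}$ is unbounded, so $\phi_D$ is not tempered.

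Now invoke Corollary~\ref{cor: generic appear in every standard module}(3) with $D=\{0\}$: since $\pi = \pi(C_\phi,\1_{C_\phi})$ is the generic representation attached to the open orbit, there exists $\pi_D \in \Pi_{\phi_D}(G)$ such that $\pi$ is a subquotient of the standard module $M_{\pi_D}$. Because $\phi_D$ is not tempered, neither is $\pi_D$, so by the Langlands classification $M_{\pi_D}$ is a standard module of the form $\operatorname{Ind}_P^G(\tau \otimes \nu)$ for a proper parabolic $P \subsetneq G$, a tempered $\tau$ on the Levi, and a positive character $\nu$. Consequently, $\pi$ is a subquotient of a proper parabolic induction, contradicting the supercuspidality of $\pi$. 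Therefore $\phi|_{\SL_2(\C)} = 1$.

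The main technical point to handle with care is the verification that $\phi_D$ is genuinely non-tempered whenever $\sigma \neq 1$, which comes down to the standard observation that the infinitesimal parameter of a tempered parameter with nontrivial Deligne--$\SL_2$ restriction is unbounded on $W_F$. Once this is in hand, everything else is an immediate application of Proposition \ref{prop: temper=arthur+open} and Corollary~\ref{cor: generic appear in every standard module}.
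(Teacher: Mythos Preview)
Your proof is correct and follows the same skeleton as the paper's --- locate $\pi$ at the open orbit, invoke Corollary~\ref{cor: generic appear in every standard module}(3) for the closed orbit, and contradict supercuspidality --- but it takes a more circuitous route at both ends.

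For openness, you use ``supercuspidal $\Rightarrow$ tempered $\Rightarrow$ open'' via Proposition~\ref{prop: temper=arthur+open}. This works (modulo reducing to unitary central character), but the step ``$\pi$ tempered $\Rightarrow$ $\phi_\pi$ tempered'' is not among the desiderata explicitly listed in Section~\ref{ssec:LLC}; it is an additional standard expectation. The paper instead uses Theorem~\ref{prop: GP} directly: under the KL hypothesis, \emph{generic} already implies open, so temperedness never enters.

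For the contradiction, your computation that $\phi_D$ is non-tempered (via unboundedness of $\lambda = \phi(\cdot,1)\sigma(d_\cdot)$) is correct but unnecessary, and again leans on the tempered-parameter/tempered-representation correspondence to deduce that $\pi_D$ is non-tempered. The paper simply observes that if $C_0 \neq C^o$ then $M(\pi(C_0,\1))$ has both $\pi(C_0,\1)$ and $\pi$ as distinct subquotients, hence is reducible, hence is induced from a proper parabolic --- immediately contradicting supercuspidality of $\pi$. Your route buys an explicit reason why the closed-orbit standard module is properly induced, at the cost of importing an extra hypothesis; the paper's route stays strictly within the stated assumptions.
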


\begin{proof}
The proof is along the same line as the proof of Theorem~\ref{prop: GP}. Let $\lambda$ be the infinitesimal parameter of $\phi$, it suffices to show that $V_\lambda=\{0\}.$ Assume that $C_0$ is the zero orbit of $V_\lambda$ and $C^o$ is the open orbit in $V_\lambda$. Since $\pi$ is assumed to be generic, then $\phi\in C^o$ by Theorem~\ref{prop: GP}. 
Desiderata~(\ref{desi2}) in  Section~\ref{ssec:LLC} implies that $\pi=\pi(C^o,\1)$. Now Corollary \ref{cor: generic appear in every standard module}(3) implies that $\pi$ appeared as a subquotient of the standard module of $\pi(C_0,\1)$. Since $\pi$ is assumed to be supercuspidal, it forces that $C^o=C_0$ and thus $V_\lambda= \{ 0\}.$
\end{proof}

\begin{corollary}\label{central-character} 
Assume Conjecture $\ref{conj: GP}$, the Kazhdan-Lusztig Hypothesis and the desiderata in $\S\ref{ssec:LLC}$. Then for any infinitesimal parameter $\lambda$ of $G$, all of the irreducible representations in $\Rep_\lambda(G)$ have the same central character.
\end{corollary}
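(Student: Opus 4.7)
The strategy is to leverage the generic representation in the $L$-packet of the open Langlands parameter as a common subquotient witnessing that all standard modules within $\Rep_\lambda(G)$ share a single central character. Let $\phi^o$ denote the open Langlands parameter with infinitesimal parameter $\lambda$, corresponding to the unique open $H_\lambda$-orbit $C^o\subset V_\lambda$. By Proposition \ref{prop: key}, $L(s,\phi^o,\Ad)$ is regular at $s=1$, so Conjecture \ref{conj: GP} produces a generic representation $\pi^o\in \Pi_{\phi^o}(G)$, which by Desideratum (\ref{desi2}) of Section \ref{ssec:LLC} is identified with $\pi(C^o,\1_{C^o})$ under \eqref{llc:map} for a suitable Whittaker datum $\mathfrak{w}$.

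Now fix any $H_\lambda$-orbit $D \subset V_\lambda$ with corresponding parameter $\phi_D$. Invoking Corollary \ref{cor: generic appear in every standard module}(3), which requires both Hypothesis \ref{hypothesis} and Desideratum (\ref{wittwisting}), we obtain a representation $\pi_D\in \Pi_{\phi_D}(G)$ such that $\pi^o$ appears as a subquotient of the standard module $M(\pi_D)$. Since $M(\pi_D)$ is a parabolic induction of a tempered representation twisted by a character in the positive chamber, all of its irreducible subquotients share the central character of this inducing datum, which coincides with the central character of the Langlands quotient $\pi_D$. Therefore $\omega_{\pi^o}=\omega_{\pi_D}$.

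Combining this identity with Desideratum (\ref{desi4}), which gives $\omega_{\pi_D}=\omega_{\phi_D}$ and $\omega_{\pi^o}=\omega_{\phi^o}$, we conclude $\omega_{\phi_D}=\omega_{\phi^o}$ for every orbit $D$ in $V_\lambda$. Since every irreducible object of $\Rep_\lambda(G)$ lies in $\Pi_{\phi_D}(G)$ for some such $D$, this shows every such representation has central character $\omega_{\phi^o}$, as desired. The only subtlety I anticipate is to confirm that the subquotient relation furnished by Corollary \ref{cor: generic appear in every standard module}(3) takes place among representations of $G(F)$ itself, rather than of a pure inner form; this is automatic, however, because $\pi^o$ is a representation of $G(F)$, and a standard module containing it as a subquotient is necessarily a standard module of a representation of $G(F)$.
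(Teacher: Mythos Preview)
Your proof is correct and follows essentially the same approach as the paper's: both use Conjecture~\ref{conj: GP} to produce the generic representation $\pi^o=\pi(C^o,\1)$, then invoke Corollary~\ref{cor: generic appear in every standard module}(3) to exhibit $\pi^o$ as a subquotient of a standard module $M(\pi_D)$ for each orbit $D$, and finally appeal to Desideratum~(\ref{desi4}) to conclude $\omega_{\phi_D}=\omega_{\phi^o}$. Your added remark about the subquotient necessarily living on $G(F)$ rather than a pure inner form is a helpful clarification not made explicit in the paper.
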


\begin{proof}
Let $C^o$ be the open orbit in $V_\lambda$. Then Conjecture \ref{conj: GP}, or its equivalent form Conjecture \ref{conj: GP2}, implies that there is an irreducible generic representation $\pi^o\in \Pi_{\phi^o}$, where $\phi^o$ is the Langlands parameter associated with $C^o$. Let  $\phi$ be any $L$-parameter with $\lambda_\phi=\lambda$. Part (3) of Corollary \ref{cor: generic appear in every standard module}, which is true under the assumption of Kazhdan-Lusztig Hypothesis \ref{hypothesis}, says that $\pi^o$ is a sub-quotient of $M(\phi,1)$, where $M(\phi,1)$ is the standard module of $\pi(\phi,1)$, the representation with enhanced Langlands parameter $(\phi.1)$. Note that $M(\phi,1)$ has central character $\omega_\phi$. Thus we get that $\omega_{\phi^o}=\omega_{\pi^o}=\omega_\phi$. This proves the corollary.
\end{proof}

\begin{remark}
If we assume that for any local $L$-parameter $\phi$ and any irreducible representation $\pi\in \Pi_\phi(G)$, the central character $\omega_\pi$ of $\pi$ is $\omega_\phi$ as the recipe given in \cite{GR}, it is possible to check Corollary \ref{central-character} unconditionally, namely, to check $\omega_{\phi_1}=\omega_{\phi_2}$ if $\lambda_{\phi_1}=\lambda_{\phi_2}$. Indeed, for irreducible representation of quasi-split $\SO_{2n}$ and $\Sp_{2n}$ over $p$-adic field, it is checked in an appendix of \cite{Liu-Shahidi} that the central character only depends on its infinitesimal parameter.
\end{remark}

\section{Generic ABV-packets}\label{sec:genABV}

\subsection{Generic ABV-packets}
F. Shahidi proposed the following conjecture in \cite{Liu-Shahidi}.
\begin{conjecture}[Enhanced Shahidi's Conjecture] \label{conj: shahidi}
Let $G$ be a quasi-split classical group and let $\psi$ be an Arthur parameter of $G$. Then $\Pi_\psi(G)$ contains a generic representation if and only if $\psi$ is tempered.
\end{conjecture}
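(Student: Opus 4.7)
The plan is to assemble the biconditional of Conjecture~\ref{conj: shahidi} from four pieces already present in the paper: Vogan's conjecture on $A$-packets, the ABV-packet genericity conjecture (Conjecture~\ref{our conjecture}) which holds for classical groups, Proposition~\ref{prop: key} relating openness to regularity of the adjoint $L$-function at $s=1$, and Proposition~\ref{prop: temper=arthur+open} characterizing tempered parameters as those that are simultaneously open and of Arthur type. Write $\phi := \phi_\psi$ for the Langlands parameter attached to $\psi$; then $\phi$ is of Arthur type by construction, and $\psi$ is tempered as an Arthur parameter (i.e., $\psi$ is trivial on the Arthur $\SL_2$) if and only if $\phi_\psi$ is tempered as a Langlands parameter.

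With these ingredients in hand, I would set up the chain of equivalences: $\psi$ tempered $\Leftrightarrow$ $\phi$ tempered $\Leftrightarrow$ (by Proposition~\ref{prop: temper=arthur+open}, since $\phi$ is of Arthur type) $\phi$ open in $V_\lambda$ $\Leftrightarrow$ (by Proposition~\ref{prop: key}) $L(s,\phi,\Ad)$ regular at $s=1$ $\Leftrightarrow$ (by the established Conjecture~\ref{our conjecture} for classical groups, namely the second Proposition of Section~\ref{mainresults}) $\Pi^{\ABV}_\phi(G)$ contains a generic representation $\Leftrightarrow$ (by Vogan's conjecture on $A$-packets) $\Pi_\psi(G)$ contains a generic representation. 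Only the last equivalence uses Vogan's conjecture in its unproved form; the earlier steps are unconditional for classical groups and reduce the statement entirely to the geometry of the Vogan variety $V_\lambda$ and the functor $\NEvs$.

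The main obstacle is precisely the reliance on Vogan's conjecture on $A$-packets for classical groups, which is known unconditionally only for $\GL_n$ (by \cite{CR2}) and, for arbitrary $G$, only for tempered parameters (Corollary~\ref{vogantempered}). For the forward direction the argument is in fact already unconditional: if $\psi$ is tempered then $\phi_\psi$ is a tempered Langlands parameter, $\Pi_\psi(G)$ coincides with the ordinary $L$-packet $\Pi_{\phi_\psi}(G)$, and Corollary~\ref{corollary: GI} (which rests on Theorem~\ref{theorem: GI} of Gan--Ichino) supplies a generic representation. The converse is the delicate direction: to conclude that the existence of a generic representation in $\Pi_\psi(G)$ forces $\psi$ to be tempered, one needs at least the inclusion $\Pi_\psi(G) \subseteq \Pi^{\ABV}_\phi(G)$ at possibly non-tempered Arthur parameters $\psi$, and it is exactly this containment that plays the role of the essential hypothesis in our proof.
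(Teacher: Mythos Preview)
Your proposal is correct and takes essentially the same approach as the paper's Corollary~\ref{cor: main cor}: the forward direction is unconditional via Gan--Ichino (Theorem~\ref{theorem: GI}) and the identity $\Pi_\psi(G)=\Pi_\phi(G)$ for tempered $\psi$, while the converse is conditional on the inclusion $\Pi_\psi(G)\subset\Pi_\phi^{\ABV}(G)$ and then follows from Proposition~\ref{prop: temper=arthur+open} combined with the ABV-packet genericity result for classical groups. Your detour through Proposition~\ref{prop: key} and the adjoint $L$-function is harmless but unnecessary, since the paper's argument passes directly from ``$\phi$ not open'' to ``$\Pi_\phi^{\ABV}(G)$ contains no generic'' via Proposition~\ref{prop: tempered ABV} (equivalently Corollary~\ref{corollary: classical}).
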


Vogan's conjecture suggests (see Subsection~\ref{mainresults}) the following ABV-packets analogue of Conjecture \ref{conj: shahidi}.

\begin{proposition}\label{prop: tempered ABV}
Let $G$ be a quasi-split classical group over $F$ and $\phi$ be a local Langlands parameter of Arthur type. Then $\Pi_\phi^{\ABV}(G)$ contains a generic representation if and only if $\phi$ is tempered.
\end{proposition}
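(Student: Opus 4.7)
The plan is to combine three results already established in the excerpt: Proposition~\ref{prop: temper=arthur+open} (tempered equals open plus Arthur-type), Corollary~\ref{vogantempered} (if $\phi$ is open then $\Pi_\phi^{\ABV}(G)=\Pi_\phi(G)$), and Corollary~\ref{corollary: GI} (for classical groups, $\phi$ is open iff $\Pi_\phi(G)$ contains a generic representation, via the theorem of Gan--Ichino). Both directions of the proposition should reduce to short chains of implications using these inputs; the Arthur-type hypothesis is only needed in the reverse direction.

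For the forward direction I would assume $\phi$ is tempered. Then Proposition~\ref{prop: temper=arthur+open} yields that $\phi$ is open, so Corollary~\ref{vogantempered} gives $\Pi_\phi^{\ABV}(G)=\Pi_\phi(G)$. Since $G$ is a classical group, Corollary~\ref{corollary: GI} then produces a generic representation in $\Pi_\phi(G)=\Pi_\phi^{\ABV}(G)$. No new work is required.

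For the reverse direction I would take $\pi\in\Pi_\phi^{\ABV}(G)$ generic with Langlands parameter $\phi_\pi$. Since $\pi$ is generic and $G$ is classical, Corollary~\ref{corollary: GI} forces $\phi_\pi$ to be open. Because every representation in $\Pi_\phi^{\ABV}(G)$ has the same infinitesimal parameter $\lambda$ as $\phi$, the orbit $C_{\phi_\pi}$ is the open orbit $C^o\subset V_\lambda$. Its closure $\overline{C^o}=V_\lambda$ is smooth, being an affine space, so Corollary~\ref{Cor:KLtwisting}(2) applies and $\pi$ lies in a \emph{unique} ABV-packet, namely $\Pi_{\phi_\pi}^{\ABV}(G)$. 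Consequently $\phi$ and $\phi_\pi$ are $\dualgroup{G}$-conjugate, so $\phi$ is open; combining openness with the Arthur-type hypothesis and Proposition~\ref{prop: temper=arthur+open} gives that $\phi$ is tempered.

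The main step to handle carefully is the uniqueness of the ABV-packet containing $\pi$ in the reverse direction. This rests on Proposition~\ref{proposition:twisting}(2), namely $\Evs_D(\IC(C^o,\mathcal{L}))=0$ for any orbit $D\neq C^o$, which is valid precisely because $\overline{C^o}=V_\lambda$ is smooth, with the analogous vanishing for $\NEvs$ encapsulated in Corollary~\ref{Cor:KLtwisting}(2). A pleasant feature of this strategy is that it invokes neither the Kazhdan--Lusztig hypothesis nor any unproven desideratum: all three ingredients are unconditional for quasi-split classical groups, so the argument should go through as stated.
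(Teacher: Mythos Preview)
Your approach is essentially the one the paper takes (the paper routes through Corollary~\ref{corollary: classical}(1), which in turn rests on Theorem~\ref{theorem:main} and Gan--Ichino, using the same circle of ideas). However, there is a genuine gap in your reverse direction: Corollary~\ref{Cor:KLtwisting}(2), and the underlying Proposition~\ref{proposition:twisting}(2), apply to $\pi(C,\mathcal{L}|_C)$ only when $\mathcal{L}$ is an equivariant local system on $\overline{C}$, not merely on $C$. Smoothness of $\overline{C^o}=V_\lambda$ is necessary but not sufficient here. You have shown that $\pi=\pi(C^o,\mathcal{F})$ for some equivariant local system $\mathcal{F}$ on $C^o$, but you have not verified that $\mathcal{F}$ extends to $V_\lambda$. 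This is not automatic: equivariant local systems on $V_\lambda$ correspond to representations of $\pi_0(H_\lambda)$, while those on $C^o$ correspond to representations of $A_{C^o}=\pi_0(Z_{H_\lambda}(x))$, and the natural map $A_{C^o}\to\pi_0(H_\lambda)$ is in general neither injective nor surjective.

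The fix is exactly the step the paper makes explicit in the proof of Theorem~\ref{theorem:main}: invoke Desiderata~(\ref{desi2}) of Section~\ref{ssec:LLC} to conclude that, in the Whittaker normalization associated to the generic character for which $\pi$ is generic, one has $\pi=\pi(C^o,\1)$. The constant sheaf $\1$ trivially extends to $V_\lambda$, so Corollary~\ref{Cor:KLtwisting}(2) (equivalently Corollary~\ref{cor: generic appear in every standard module}(2)) then applies. Since Desiderata~(\ref{desi2}) is a theorem for classical groups, your argument remains unconditional in that setting; but your final sentence should be amended to acknowledge that this desideratum is used.
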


The proof of this proposition will be given later as a corollary of a more general result. Because ABV-packets are determined by the geometry of Vogan varieties, it is natural to drop the Arthur type condition in the proposition above, because it is hard to translate it to a geometric condition. 
Thus we propose the following conjecture which can be viewed as a generalization of Conjecture \ref{conj: GP} and also Conjecture \ref{conj: shahidi} modulo the Vogan's conjecture \cite{CFMMX}*{Conjecture 1, \S8.3}.

\begin{conjecture}\label{conj: genericabv}
Let $G$ be any quasi-split reductive group over a $p$-adic field $F$. Let $\phi$ be a local Langlands parameter of $G$. Then the ABV-packet $\Pi_\phi^{\ABV}(G)$ contains a generic representation if and only if $\phi$ is open.

\end{conjecture}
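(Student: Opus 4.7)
The plan is to prove both implications by reducing to the Gross--Prasad/Rallis conjecture (Conjecture~\ref{conj: GP}) combined with the structural geometric results on ABV-packets already established in the excerpt; no new geometric input should be required beyond those.

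For the forward direction, I take a generic $\pi \in \Pi_\phi^{\ABV}(G)$ and let $\phi_\pi$ denote its Langlands parameter, which necessarily shares the infinitesimal parameter $\lambda$ of $\phi$. First I invoke Conjecture~\ref{conj: GP}---valid for classical groups by Theorem~\ref{theorem: GI} and in general via the Kazhdan--Lusztig hypothesis through Theorem~\ref{prop: GP}---to conclude that $\phi_\pi$ is open, i.e.\ $C_{\phi_\pi}$ is the open $H_\lambda$-orbit in $V_\lambda$. Next, the Whittaker normalization in Desideratum~(\ref{desi2}) forces $\pi = \pi(C_{\phi_\pi}, \1_{C_{\phi_\pi}})$. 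Finally, I appeal to Corollary~\ref{cor: generic appear in every standard module}(2), which under Desideratum~(\ref{wittwisting}) guarantees that the only ABV-packet containing such a $\pi$ is the one attached to the open orbit $C_{\phi_\pi}$. Hence $C_\phi = C_{\phi_\pi}$ and $\phi$ is open.

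For the reverse direction, suppose $\phi$ is open. Corollary~\ref{vogantempered} gives $\Pi_\phi^{\ABV}(G) = \Pi_\phi(G)$ unconditionally. Proposition~\ref{prop: key} translates openness of $\phi$ into regularity of $L(s,\phi,\Ad)$ at $s=1$, and the ``$L$-regular implies generic'' direction of Conjecture~\ref{conj: GP} then produces a generic representation inside $\Pi_\phi(G) = \Pi_\phi^{\ABV}(G)$, finishing this half.

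The main obstacle is that the argument depends essentially on three inputs beyond the geometric machinery of Sections~\ref{ssec:parameters}--\ref{ssec:geometric}: the full Gross--Prasad/Rallis conjecture (both directions), the $p$-adic Kazhdan--Lusztig hypothesis (used in the ``generic implies open'' half through Theorem~\ref{prop: GP}), and the Whittaker-compatibility Desideratum~(\ref{wittwisting}) (used through Corollary~\ref{cor: generic appear in every standard module}). For quasi-split classical groups all of these are available---Conjecture~\ref{conj: GP} by Gan--Ichino, and Desideratum~(\ref{wittwisting}) from the works of Arthur, Mok, and KMSW---so the conjecture is unconditional there; extending beyond classical groups will hinge on removing the reliance on the $p$-adic Kazhdan--Lusztig hypothesis and on verifying the Whittaker-compatibility piece of the local Langlands desiderata in full generality.
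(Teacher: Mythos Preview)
Your argument is correct and matches the paper's proof of Theorem~\ref{theorem:main} essentially step for step: both directions reduce to Conjecture~\ref{conj: GP} via Proposition~\ref{prop: key}, Desideratum~(\ref{desi2}), Corollary~\ref{cor: generic appear in every standard module}(2), and Corollary~\ref{vogantempered}. One small over-statement in your dependency summary: once you assume the full Gross--Prasad/Rallis conjecture, the Kazhdan--Lusztig hypothesis is not an additional input---Corollary~\ref{cor: generic appear in every standard module}(2) requires only Desideratum~(\ref{wittwisting}), not Hypothesis~\ref{hypothesis}, and Theorem~\ref{prop: GP} is merely one route to one direction of Conjecture~\ref{conj: GP} rather than a separate hypothesis of the proof.
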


Note that $\Pi_\phi(G)\subset \Pi_\phi^{\ABV}(G)$ for general Langlands parameter $\phi$ and $\Pi_\phi(G)=\Pi_{\phi}^{\ABV}(G)$ for open Langlands parameter $\phi$, see \cite{CFMMX}*{Theorem 7.22} (b) or Corollary \ref{vogantempered}, Conjecture \ref{conj: genericabv} is indeed a generalization of Conjecture \ref{conj: GP} in view of Proposition \ref{prop: key}. 
Moreover, the new content of Conjecture \ref{conj: genericabv} compared to Conjecture \ref{conj: GP} is that $\Pi_\phi^{\ABV}(G)$ does not contain any generic representations if $C_\phi$ is not open in $V_{\lambda_\phi}$. 

On the other hand, the following theorem shows that Conjecture \ref{conj: GP} is indeed equivalent to Conjecture \ref{conj: genericabv}.

\begin{theorem}\label{theorem:main}
Let $G$ be a quasi-split reductive group over $F$ and $\phi$ be a local Langlands parameter of $G$. We assume that the local Langlands correspondence holds for $G$ which also satisfies the desiderata in $\S\ref{ssec:LLC}$. Assuming Conjecture $\ref{conj: GP}$. Then $\Pi_\phi^{\ABV}(G)$ contains a generic representation if and only $\phi$ is open. 
\end{theorem}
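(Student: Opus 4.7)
The plan is to reduce both implications to Proposition~\ref{prop: key}, which identifies openness of $\phi$ in $V_\lambda$ with regularity of $L(s,\phi,\Ad)$ at $s=1$, and to the assumed Conjecture~\ref{conj: GP}.

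For the direction $\phi$ open $\Rightarrow$ $\Pi_\phi^{\ABV}(G)$ contains a generic representation, I would invoke Corollary~\ref{vogantempered} to identify $\Pi_\phi^{\ABV}(G)=\Pi_\phi(G)$. Proposition~\ref{prop: key} then says $L(s,\phi,\Ad)$ is regular at $s=1$, and Conjecture~\ref{conj: GP} supplies the generic representation inside $\Pi_\phi(G)=\Pi_\phi^{\ABV}(G)$. This direction is essentially formal.

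The converse is the nontrivial one. Suppose $\pi\in \Pi_\phi^{\ABV}(G)$ is generic, and let $\phi'$ be its Langlands parameter, so $\pi\in \Pi_{\phi'}(G)$. By \eqref{eq: defintion of ABV}, the ABV-packet sits inside $\Rep_\lambda(G)$, so $\phi'$ has the same infinitesimal parameter $\lambda$ as $\phi$. Applying Conjecture~\ref{conj: GP} to the generic $\pi\in \Pi_{\phi'}(G)$ and then Proposition~\ref{prop: key}, I would conclude that $\phi'$ is open in $V_\lambda$, i.e.\ $C_{\phi'}$ is the unique open $H_\lambda$-orbit.

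The heart of the argument is then to force $\phi=\phi'$. For this, I would select the Whittaker datum $\mathfrak{w}$ for which $\pi$ is generic; Desiderata~(\ref{desi2}) then identifies $\pi$ with $\pi(C_{\phi'},\1_{C_{\phi'}})$ under the Whittaker normalization~\eqref{llc:map}. Corollary~\ref{cor: generic appear in every standard module}(2), whose proof rests on Proposition~\ref{proposition:twisting}(2) together with Desiderata~(\ref{wittwisting}) and does \emph{not} invoke the Kazhdan-Lusztig hypothesis, asserts that the only ABV-packet containing such a $\pi$ is the one attached to the open orbit, namely $\Pi_{\phi'}^{\ABV}(G)$. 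Combined with $\pi\in \Pi_\phi^{\ABV}(G)$, this forces $\phi=\phi'$, and hence $\phi$ is open. The main obstacle to guard against is the bookkeeping of Whittaker normalizations: the normalization that exhibits $\pi$ as $\pi(C_{\phi'},\1_{C_{\phi'}})$ need not coincide with the one used to define $\Pi_\phi^{\ABV}(G)$, but Desiderata~(\ref{wittwisting}) (and the normalization-independence of ABV-packets deduced from it in Section~\ref{ssec:geometric}) provides the compatibility that makes the uniqueness conclusion descend to any normalization.
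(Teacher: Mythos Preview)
Your proof is correct and follows essentially the same approach as the paper's own proof. The paper treats the forward direction as already established (noting just before the theorem that $\Pi_\phi(G)=\Pi_\phi^{\ABV}(G)$ for open $\phi$, so Conjecture~\ref{conj: GP} applies directly), and for the converse argues exactly as you do: a generic $\pi$ must have open Langlands parameter $\phi'=\phi^o$ by Conjecture~\ref{conj: GP} and Proposition~\ref{prop: key}, Desiderata~(\ref{desi2}) identifies it with $\pi(C^o,\1)$, and Corollary~\ref{cor: generic appear in every standard module}(2) then pins down the unique ABV-packet containing it. Your explicit attention to the Whittaker-normalization bookkeeping is a welcome clarification that the paper leaves implicit.
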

\begin{proof}
Let $\lambda$ be the infinitesimal parameter of $\phi$ and let $C^o$ be the open orbit of $V_\lambda$ and  let $\phi^o$ be the open Langlands parameter. It suffices to show that if $\phi$ is not open, then $\Pi_\phi^\ABV(G)$ cannot contain a generic representation. By Proposition \ref{prop: key}, Conjecture \ref{conj: GP} and Desiderata (\ref{desi2}) of Section \ref{ssec:LLC}, if $\pi$ is a generic representation in $\Pi(G)_\lambda$, we can assume that $\pi=\pi(C^o,\1)$ under the local Langlands correspondence. By Corollary \ref{cor: generic appear in every standard module} (2), the only ABV-packet which contains $\pi$ is $\Pi^{\ABV}_{\phi^o}$.
\end{proof}

\begin{corollary}\label{corollary: classical}
Let $G$ be a quasi-split reductive group over $F$. 
\begin{enumerate}
 \item If $G$ is classical, then $\Pi^{\ABV}_\phi$ contains a generic representation if and only if $L(s,\phi,\Ad)$ is regular at $s=1$.
\item Assuming  Kazhdan-Lusztig's Hypothesis \ref{hypothesis}. If $L(s,\phi,\Ad)$ has a pole at $s=1$, then $\Pi_\phi^{\ABV}(G)$ does not contain a generic representation.

\end{enumerate}
\end{corollary}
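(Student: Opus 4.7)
My plan is to assemble the two parts of the corollary from results already proved in the excerpt. For part (1), since $G$ is classical, Theorem~\ref{theorem: GI} of Gan-Ichino supplies Conjecture~\ref{conj: GP}, and the Local Langlands desiderata of Section~\ref{ssec:LLC} are known for classical groups (by Arthur, Mok, and Kaletha-Minguez-Shin-White, as recorded in the remark following the desiderata). Hence the hypotheses of Theorem~\ref{theorem:main} are satisfied, and that theorem gives: $\Pi^{\ABV}_\phi(G)$ contains a generic representation if and only if $\phi$ is open. Combining this with Proposition~\ref{prop: key}, which characterizes openness by regularity of $L(s,\phi,\Ad)$ at $s=1$, yields part (1).

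For part (2), I would argue by contradiction using only the Kazhdan-Lusztig Hypothesis together with the desiderata of Section~\ref{ssec:LLC}. Suppose $L(s,\phi,\Ad)$ has a pole at $s=1$; by Proposition~\ref{prop: key}, $\phi$ is not open in $V_{\lambda_\phi}$. Assume for contradiction that there is a generic $\pi\in\Pi^{\ABV}_\phi(G)$. Since $\Pi^{\ABV}_\phi(G)\subset\Rep_{\lambda_\phi}(G/F)$, the Langlands parameter $\phi_\pi$ of $\pi$ has infinitesimal parameter $\lambda_\phi$. Theorem~\ref{prop: GP}, which rests on the Kazhdan-Lusztig Hypothesis, forces $\phi_\pi$ to be the unique open parameter $\phi^o$ in $V_{\lambda_\phi}$, and Desideratum~(\ref{desi2}) in Section~\ref{ssec:LLC} identifies $\pi$ with $\pi(C^o,\1_{C^o})$ in a suitable Whittaker normalization. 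Corollary~\ref{cor: generic appear in every standard module}(2) then asserts that the only ABV-packet containing $\pi$ is $\Pi^{\ABV}_{\phi^o}(G)$. Since $\phi\neq\phi^o$, this contradicts $\pi\in\Pi^{\ABV}_\phi(G)$.

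The essence is that the nontrivial geometric content (matching genericity with openness) has been isolated in Theorem~\ref{theorem:main} and Theorem~\ref{prop: GP}; this corollary simply combines those with Proposition~\ref{prop: key}. There is no real obstacle, but it is worth flagging one asymmetry: part (2) only needs the direction ``generic implies open,'' supplied by the Kazhdan-Lusztig Hypothesis via Theorem~\ref{prop: GP}, whereas the converse ``open implies generic'' requires the full Conjecture~\ref{conj: GP}; this is precisely why part (1) is restricted to classical groups, where Gan-Ichino provides that converse unconditionally.
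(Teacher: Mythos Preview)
Your proposal is correct and follows essentially the same route as the paper's proof: part (1) is exactly the combination of Gan--Ichino's Theorem~\ref{theorem: GI}, Proposition~\ref{prop: key}, and Theorem~\ref{theorem:main}, and part (2) is precisely what the paper means by ``follows from Theorem~\ref{prop: GP} and the proof of Theorem~\ref{theorem:main}'' --- you have simply spelled out that the proof of Theorem~\ref{theorem:main} only needs the \emph{generic $\Rightarrow$ open} direction, which the Kazhdan--Lusztig Hypothesis supplies via Theorem~\ref{prop: GP}. Your closing remark about the asymmetry between the two parts is exactly the point.
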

\begin{proof}
 (1) follows from Gan-Ichino's Theorem \ref{theorem: GI}, Proposition \ref{prop: key} and Theorem \ref{theorem:main} directly. (2) follows from Theorem~\ref{prop: GP} and the proof of Theorem \ref{theorem:main}.
\end{proof}

\begin{proof}[Proof of Proposition \ref{prop: tempered ABV}]
By Proposition \ref{prop: temper=arthur+open}, $\phi$ is tempered if and only if $\phi$ is open in $V_\lambda$, i.e., if and only if $L(s,\phi,\Ad)$ is regular at $s=1$. The assertion follows from Corollary \ref{corollary: classical} (1) directly.
\end{proof}

Note that Proposition \ref{prop: tempered ABV} plus \cite{CFMMX}*{Conjecture 1 (a), \S8.3} would imply Conjecture \ref{conj: shahidi}. This gives a general framework to solve Conjecture \ref{conj: shahidi}. In fact, one only need one direction of \cite{CFMMX}*{Conjecture 1 (a)} to get Conjecture \ref{conj: shahidi}:
\begin{corollary}\label{cor: main cor}
Let $G$ be a quasi-split classical group. Let $\psi$ be an Arthur parameter and let $\phi=\phi_\psi$. 
We assume that $\Pi_\psi(G) \subset\Pi_{\phi}^{\ABV}(G)$ for every Arthur parameter $\psi$.
Then Conjecture $\ref{conj: shahidi}$ holds.
\end{corollary}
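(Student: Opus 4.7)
The plan is to combine the inclusion hypothesis $\Pi_\psi(G)\subseteq \Pi_\phi^{\ABV}(G)$ with the classical-group statement Corollary~\ref{corollary: classical}(1), together with the identification ``tempered $\iff$ open and of Arthur type'' recorded in Proposition~\ref{prop: temper=arthur+open}. Both implications in Conjecture~\ref{conj: shahidi} should then fall out essentially formally.

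First I would handle the ``only if'' direction. Suppose $\Pi_\psi(G)$ contains a generic representation $\pi$. The inclusion hypothesis places $\pi$ inside $\Pi_\phi^{\ABV}(G)$, with $\phi=\phi_\psi$, so $\Pi_\phi^{\ABV}(G)$ contains a generic representation. Since $G$ is classical, Corollary~\ref{corollary: classical}(1) forces $L(s,\phi,\Ad)$ to be regular at $s=1$, and by Proposition~\ref{prop: key} this is equivalent to $\phi$ being open. By construction $\phi$ is of Arthur type, so Proposition~\ref{prop: temper=arthur+open} gives that $\phi$ is tempered. The uniqueness clause in that proposition then forces the Arthur $\SL_2(\CC)$-factor of $\psi$ to be trivial, which is exactly the condition that $\psi$ is tempered.

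The converse is lighter. If $\psi$ is tempered then $\phi=\phi_\psi$ is a tempered Langlands parameter, hence open and of Arthur type by Proposition~\ref{prop: temper=arthur+open}. Corollary~\ref{vogantempered} identifies $\Pi_\phi^{\ABV}(G)$ with the $L$-packet $\Pi_\phi(G)$, which coincides with $\Pi_\psi(G)$ by Arthur's parametrization in the tempered case. Corollary~\ref{corollary: classical}(1), applied to the open parameter $\phi$, then produces a generic representation inside $\Pi_\phi^{\ABV}(G)=\Pi_\psi(G)$, as required. This direction uses neither the inclusion hypothesis nor anything beyond what has already been established earlier in the paper.

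The essential content is concentrated in Corollary~\ref{corollary: classical}(1), which for classical groups is unconditional thanks to Gan--Ichino's Theorem~\ref{theorem: GI}; the remainder is bookkeeping. I do not anticipate any genuine obstacle, with the only subtlety being the passage from ``$\phi_\psi$ is tempered'' to ``$\psi$ is tempered'', which is handled by the uniqueness of the Arthur parameter recovering a tempered Langlands parameter as recorded in Proposition~\ref{prop: temper=arthur+open}.
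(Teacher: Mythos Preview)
Your proposal is correct and follows essentially the same route as the paper's proof: both directions rest on Proposition~\ref{prop: temper=arthur+open}, the inclusion hypothesis, and the classical-group result (Corollary~\ref{corollary: classical}(1), equivalently Gan--Ichino), with the only cosmetic difference being that you argue the ``only if'' direction directly while the paper uses the contrapositive via Proposition~\ref{prop: tempered ABV}. One small remark: what you call the ``uniqueness clause'' is not in the statement of Proposition~\ref{prop: temper=arthur+open} but is established in its proof (open plus of Arthur type forces the Arthur $\SL_2$ to act trivially); the paper's contrapositive formulation sidesteps having to invoke this explicitly.
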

\begin{proof}
By Corollary \ref{vogantempered}, if $\psi$ is tempered, then $\Pi_\psi(G)=\Pi_\phi(G)=\Pi_{\phi}^{\ABV}(G)$, which contains a generic representation by Gan-Ichino's theorem \ref{theorem: GI}. If $\psi$ is not tempered, then $\phi=\phi_\psi$ is not open by Proposition \ref{prop: temper=arthur+open}. By Proposition \ref{prop: tempered ABV} does not contain a generic representation. The assumption $\Pi_\psi(G)\subset \Pi_\phi^{\ABV}(G)$ implies that $\Pi_\psi(G)$ does not contain a generic representation.
\end{proof}

\begin{remark}
The hypothesis $\Pi_\psi(G)\subset\Pi_{\phi}^{\ABV}(G)$ implies all representations $\pi$ in $\Pi_\psi(G)$ have the same infinitesimal parameter. This is true by \cite{Moeglin}*{Proposition 4.1}.
\end{remark}

\subsection{A generalization of Enhanced Shahidi's conjecture}

Let $G$ be a classical group over a p-adic field $F$. Following \cite{HLL}, given an irreducible representation $\pi$ of $G$ of Arthur type (which means it is in certain $A$-packet), we consider the set 
$$\Psi(\pi)=\{\mathrm{Arthur ~ parameter ~} \psi \mathrm{~of ~} G: \pi\in \Pi_\psi \}.$$
Since $A$-packets usually have nontrivial intersections, $\Psi(\pi)$ is usually not a singleton. Various structures of $\Psi(\pi)$ were studied in \cites{HLL, HLLZ}. An equivalent statement of Enhanced Shahidi's conjecture \ref{conj: shahidi} states that if $\pi$ is generic and of Arthur type, then $\Psi(\pi)$ is a singleton. 

Analogously, we can consider the following situation. Let $G$ be a general reductive group over a $p$-adic field $F$. We assume the local Langlands correspondence for $G$. Let $\Phi(G)$ be the set of all $L$-parameters of $G$ (up to equivalence). Let $\pi$ be an irreducible smooth representation of $G$, we consider the following set
$$\Phi^{\ABV}(\pi)=\{\phi\in \Phi(G): \pi\in \Pi_\phi^{\ABV}(G)\}.$$
Then Conjecture \ref{conj: genericabv} is equivalent to the following statement: if $\pi$ is generic, then $\Phi^{\ABV}(\pi)$ is a singleton. Concerning this set $\Phi^{\ABV}(\pi)$, Corollary \ref{Cor:KLtwisting} gives the following immediate corollary.
\begin{corollary}\label{cor: singleton}
 Let $G$ be a general reductive group over a $p$-adic field $F$. We assume the local Langlands correspondence for $G$. Let $\lambda$ be an infinitesimal parameter of $G$ and let $C\subset V_\lambda$ be an orbit such that $\overline{C}$ is smooth and $\CL$ be an equivariant local system on $\overline{C}$. 
 Define $\pi(C,\CL|_C) = \mathcal{P}^{-1}_{\mathfrak{w}}(C,\CL|_C)$ where $\mathcal{P}_{\mathfrak{w}}$ is as in \eqref{llc:map}. Then,
$$\Phi^{\ABV}(\pi(C,\CL|_C))=\{\phi\},$$
 where $\phi=\phi_C$ is the $L$-parameter associated with $C$. In particular, if $\overline{C}$ is smooth, then 
$$\Phi^{\ABV}(\pi(C,\1))=\{\phi\}.$$
\end{corollary}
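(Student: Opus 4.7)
The plan is to unravel the definition \eqref{eq: defintion of ABV} and reduce the claim to the vanishing and nonvanishing of the microlocal functor $\NEvs$. An $L$-parameter $\phi'$ lies in $\Phi^{\ABV}(\pi(C, \CL|_C))$ if and only if $\NEvs_{C_{\phi'}}(\IC(C, \CL|_C)) \neq 0$, since $\mathcal{P}_{\mathfrak{w}}$ sends $\pi(C, \CL|_C)$ to $\IC(C, \CL|_C)$. Only parameters with infinitesimal parameter $\lambda$ can contribute, so it suffices to show that this microlocal evaluation is nonzero for exactly one $H_\lambda$-orbit, namely $C$ itself.

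For the vanishing direction, I split into two cases on an orbit $D \neq C$. If $D \not\subseteq \overline{C}$, then $\IC(C, \CL|_C)$ vanishes on an $H_\lambda$-stable open neighborhood of $D$, so by locality of $\NEvs$ we get $\NEvs_D(\IC(C, \CL|_C)) = 0$. If $D \subseteq \overline{C}$, the smoothness hypothesis on $\overline{C}$ lets me invoke Proposition \ref{proposition:twisting}(2) with $S = \overline{C}$, giving $\Evs_D(\IC(C, \CL|_C)) = 0$, and $\NEvs_D$ vanishes a fortiori by passing to the generic locus $\Lambda_D^{\mathrm{gen}} \subseteq \Lambda_D^{\mathrm{reg}}$. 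The conjunction of these two cases is exactly Corollary \ref{Cor:KLtwisting}(2): $\pi(C, \CL|_C)$ appears in a unique ABV-packet.

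For the nonvanishing at $D = C$, Proposition \ref{proposition:twisting}(1) gives $\IC(C, \CL|_C) = \CL[\dim C]$ on the smooth variety $\overline{C}$; applying $\NEvs_C$ to this shifted local system produces a nonzero equivariant local system on $\Lambda_C^{\mathrm{gen}}$ which, up to shift, is the pullback of $\CL|_C$ along the projection $\Lambda_C^{\mathrm{gen}} \to C$ via the zero section of the conormal bundle. This places $\pi(C, \CL|_C)$ in $\Pi^{\ABV}_{\phi_C}$, so $\Phi^{\ABV}(\pi(C, \CL|_C)) = \{\phi_C\}$. The "in particular" case is $\CL = \1$, where one may take the constant local system on all of $V_\lambda$ and restrict.

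The main obstacle will be the nonvanishing step: while Corollary \ref{Cor:KLtwisting}(2) already furnishes uniqueness of the ABV-packet, pinning down that the unique packet is $\Pi^{\ABV}_{\phi_C}$ requires verifying that microlocalization of a nontrivial shifted local system along the conormal of its smooth support is nontrivial. This is a standard fact in microlocal sheaf theory but should be spelled out by a direct computation of the vanishing cycles functor on the smooth, constant-rank sheaf $\CL[\dim C]$ restricted over a generic transversal slice to $C$ inside $\overline{C}$.
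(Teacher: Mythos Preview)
Your proposal is correct and follows the same route as the paper, which simply records the result as an immediate consequence of Corollary~\ref{Cor:KLtwisting}(2); you have merely unpacked that citation. One remark: your ``main obstacle'' is not an obstacle at all. The nonvanishing $\NEvs_C(\IC(C,\CL|_C))\neq 0$ is already contained in the ``if and only if'' clause of Proposition~\ref{proposition:twisting}(2), and equivalently it is the general fact $\Pi_{\phi_C}(G)\subset\Pi_{\phi_C}^{\ABV}(G)$ recalled in the introduction, so no separate microlocal computation on a transversal slice is needed.
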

A very special case of Corollary \ref{cor: singleton} says that $\Phi^{\ABV}(\pi(C^o,\1))$ is a singleton, where $C^o$ is the open orbit in $V_\lambda$. The conjecture of Gross-Prasad and Rallis \ref{conj: GP} plus Proposition \ref{prop: key} give a representation-theoretic description of all representations of the form $\pi(C^o,\1)$: they are exactly the generic representations. It is thus interesting to ask the question: how to give a representation-theoretic description of the class of representations of the form $\pi(C,\1)$, where $\overline{C}$ is smooth in $V_\lambda$ (or even more generally, representations of the form $\pi(C,\mathcal{L}|_C)$, where $\overline{C}$ is smooth and $\CL$ is a local system on $\overline{C}$)?

Finally, if $\pi$ is of Arthur type and $\psi\in \Psi(\pi)$, then \cite{CFMMX}*{Conjecture 1, (a), \S 8.3} implies that $\phi_\psi\in \Phi^{\ABV}(\pi)$. Thus we have a conjectural inclusion $\Psi(\pi)\to \Phi^{\ABV}(\pi)$. Corollary \ref{cor: singleton}  and \cite{CFMMX}*{Conjecture 1, (a), \S 8.3} then imply the following
\begin{conjecture}\label{conj: singleton}
Let $G$ be a classical group over a $p$-adic field $F$. Let $\lambda$ be an infinitesimal parameter of $G(F)$. Let $C$ be an orbit with $\overline{C}$ smooth in $V_{\lambda}$ and $\CL$ be a local system on $\overline{C}$ such that $\pi=\pi(C,\CL|_{C})$ is of Arthur type, then $\phi_C$ is of Arthur type and  $\Psi(\pi)=\{\psi\}$, where $\phi_C$ the $L$-parameter associated with $C$ and $\psi$ is the Arthur parameter such that $\phi_C=\phi_\psi$. In particular, if $\overline{C}$ is smooth and $\pi=\pi(C,\1)$ is of Arthur type, then $\phi_C=\phi_\psi$ for some Arthur parameter $\psi$ and $\Psi(\pi)=\{\psi\}$.
\end{conjecture}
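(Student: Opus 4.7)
The plan is to combine Corollary \ref{cor: singleton} with the inclusion predicted by \cite{CFMMX}*{Conjecture 1, (a), \S 8.3}, and then separately invoke a known injectivity property of the map $\psi\mapsto\phi_\psi$ for classical groups to deduce the singleton statement.

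First, since $\overline{C}$ is smooth, Corollary \ref{cor: singleton} immediately yields
\[
\Phi^{\ABV}(\pi) = \{\phi_C\},
\]
so $\phi_C$ is the only Langlands parameter whose ABV-packet contains $\pi$. Next, for any $\psi\in \Psi(\pi)$, the containment $\Pi_\psi(G)\subset \Pi^{\ABV}_{\phi_\psi}(G)$ from \cite{CFMMX}*{Conjecture 1, (a), \S 8.3} gives $\phi_\psi \in \Phi^{\ABV}(\pi)$, and hence $\phi_\psi = \phi_C$. Since $\pi$ is assumed to be of Arthur type, $\Psi(\pi)\neq \emptyset$, so such a $\psi$ exists; this already proves that $\phi_C$ is of Arthur type and equals $\phi_\psi$ for at least one Arthur parameter $\psi$.

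For the remaining singleton claim $\Psi(\pi) = \{\psi\}$, I would appeal to the injectivity of $\psi\mapsto\phi_\psi$ on equivalence classes of Arthur parameters for classical groups. Writing such a parameter as a (suitably self-dual) direct sum $\bigoplus_i \rho_i\boxtimes S_{a_i}\boxtimes S_{b_i}$ of irreducibles of $W_F\times \SL_2(\C)\times \SL_2(\C)$, the substitution $\phi_\psi(w,x)=\psi(w,x,d_w)$ expands each summand into a Speh-type sum $\bigoplus_{j=0}^{b_i-1} \rho_i|\cdot|^{(b_i-1)/2-j}\boxtimes S_{a_i}$, and the triples $(\rho_i,a_i,b_i)$ can be recovered by grouping pieces of equal Deligne-size $a_i$ along arithmetic progressions of twists. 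This injectivity is well-documented in the Arthur--M\oe glin theory for symplectic, special orthogonal and unitary groups.

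The main obstacle is precisely this last step. The reduction to $\Phi^{\ABV}(\pi)=\{\phi_C\}$ and the production of at least one Arthur parameter mapping to $\phi_C$ are essentially formal given results already established in this paper, but showing that no two inequivalent Arthur parameters share the Langlands parameter $\phi_C$ is the substantive input. One must handle irregular infinitesimal characters and parameters that are not of good parity, where Speh segments can overlap and genuine care is needed to avoid accidental coincidences in the combinatorial reconstruction; this is where I would need to cite the precise form of injectivity from the Arthur--M\oe glin literature rather than reproving it here.
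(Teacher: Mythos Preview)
Your derivation matches the paper's own: the statement is presented as a \emph{conjecture} precisely because it is deduced from Corollary~\ref{cor: singleton} together with \cite{CFMMX}*{Conjecture 1,(a), \S 8.3}, exactly as you outline. The only difference is emphasis: you flag the injectivity of $\psi\mapsto\phi_\psi$ as the ``main obstacle,'' whereas the paper treats it as known---calling the resulting map $\Psi(\pi)\to\Phi^{\ABV}(\pi)$ an \emph{inclusion} without further comment, and indeed already invoking the uniqueness of $\psi$ with $\phi=\phi_\psi$ in the proof of Proposition~\ref{prop: temper=arthur+open}. So your caution is not misplaced, but for classical groups this injectivity is standard and need not be singled out as a genuine obstruction.
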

\begin{remark}
Note that if $C$ is an open or closed orbit, then $\ov{C}$ is smooth. If $C$ is a closed orbit, then $C=\ov{C}$ and thus for any local system $\CL$ on $C$, $\Psi(\pi(C,\CL))$ should be a singleton if $\pi(C,\CL)$ is of Arthur type. This result should follow from \cite{Xu}*{Conjecture 2.1}, which was directly inspired by the corresponding ABV-packet version result \cite{CFMMX}*{Theorem 7.22 (b)} and was checked for $\Sp_{2n}$ and split $\SO_{2n+1}$ in \cite{HLLZ}. If $C$ is open, then $\pi(C,\1)$ is the generic representation in $\Rep_{\lambda}$ and thus Conjecture \ref{conj: singleton} can be viewed as a generalization of Enhanced Shahidi's conjecture. At this moment, we are not sure if there is any non-open, non-closed orbit $C$ such that $\ov{C}$ is smooth and $\pi(C,\1)$ is of Arthur type. Proving non-existence or providing a classification of such orbit $C$ (if there is any) will be an interesting question.
\end{remark}

\section{Discrete and tempered parameters are open}\label{sec:discrete}

In this section we prove that discrete and tempered Langlands parameters are open using only their explicit description (see, in particular, \cite{GR}) and the knowledge of the cuspidal support of the representations they parametrize. We first recall these pre-requisites.  

\subsection{Background on discrete series and discrete parameters}

To a subset $\Theta \subset \Delta$ we associate a standard parabolic subgroup $P_{\Theta}= P$ with Levi decomposition $MU$ and denote $A_M$ the split component (maximal split torus in the center of $M$) of $M$. We will write $a_M^*$ for the dual of the real Lie-algebra $a_M$ of $A_M$, $(a_M)_{\mathbb{C}}^*$ for its complexification and $a_M^{*+}$ for the positive Weyl chamber in $a_M^*$ defined with respect to $P$.

Let's begin with the definition of some specific nilpotent orbits that will play a key role in our argument.
%
%\begin{definition}[Distinguished nilpotent orbit]
Let $\mathcal{N}_\g$ be the cone of nilpotent elements in $\g$. An element $x \in \mathcal{N}_\g$ is distinguished in $\mathfrak{g}$ if it is not contained in any proper Levi subalgebra of $\mathfrak{g}$.
%\end{definition}

If we think of nilpotent orbits as described by partitions of some integer, then the distinguished ones would be described by partitions with \textit{distinct} odd or even integers. 
%\newline

\begin{definition}[Discrete Langlands parameter]
Let $\phi: W_F\times \SL_2(\C) \rightarrow {}^{L}G$ be a Langlands parameter, hence satisfying the above condition. 
Let $A_{\phi}$ be the centralizer of the image of $\phi(W_F\times \SL_2(\C))$ in $\hat{G}$. The parameter is said to be discrete if $A_{\phi}$ is finite. If we are assuming that the maximal torus in the center of $G$ is $F$-anisotropic, $\phi$ is discrete if and only if $\phi(W_F\times \SL_2(\C))$ is not contained in a proper parabolic subgroup of ${}^{L}G$. 
\end{definition}

Alternatively, we can remark that conjecturally the $L$-packet of a discrete parameter is entirely constituted of discrete series representations, or that the $L$-parameter of a discrete series is necessarily a discrete parameter. By a result of Heiermann \cite{heiermann}*{Corollary 8.7}, a discrete series representation can also be described through some very specific cuspidal support $\sigma_\lambda$ where $\lambda$ is a \textbf{residual point} for the $\mu$ function, the main ingredient in the Plancherel measure on reductive p-adic groups. More precisely:

\begin{theorem}[Heiermann] \label{heiermann}
Let $P = MU$ be a parabolic subgroup of $G$ and $\sigma$ be an irreducible unitary cuspidal representation of $M$. For the induced representation $\hbox{Ind}_P^G(\sigma_\lambda)$, with $\lambda \in a_M^*$ to have a subquotient which is square integrable, it is necessary and sufficient that $\sigma_\lambda$ be a pole of the $\mu$ function of Harish-Chandra of order equal to the parabolic rank of $M$ and that the restriction of $\sigma$ to $A_G$ be a unitary character. 
\end{theorem}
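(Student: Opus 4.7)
The plan is to combine Casselman's square integrability criterion with Harish-Chandra's theory of the $\mu$-function (equivalently, the Plancherel density) for reductive $p$-adic groups. First, I would recast square integrability of a subquotient $\pi$ of $\Ind_P^G(\sigma_\lambda)$ in terms of its exponents: by Casselman's criterion, $\pi$ is square integrable if and only if every exponent of the Jacquet module of $\pi$ along a minimal parabolic contained in $P$ lies strictly inside the negative obtuse cone, and the central character $\omega_\pi$ restricted to $A_G$ is unitary. This second condition is exactly what forces the restriction of $\sigma$ to $A_G$ to be unitary, since all subquotients of $\Ind_P^G(\sigma_\lambda)$ share the same $A_G$-central character up to $\lambda$.

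The main analytic input is that the exponents appearing in subquotients of $\Ind_P^G(\sigma_\lambda)$ are controlled by the poles and zeros of the standard intertwining operators $J_{w P | P}(\sigma_\lambda)$ for $w \in W(M)$, together with the structure of the Jacquet module (via Bernstein-Zelevinsky geometric lemma). The Harish-Chandra $\mu$-function satisfies, up to a positive constant, $\mu(\sigma_\lambda)^{-1} = J_{P | \bar P}(\sigma_\lambda) J_{\bar P | P}(\sigma_\lambda)$, and factors as a product of rank-one $\mu$-functions indexed by the $W(M)$-orbits of reduced roots of $A_M$ in $G$. Consequently, the order of the pole of $\mu(\sigma_\lambda)$ at $\lambda$ counts the number of independent rank-one walls on which $\lambda$ sits, and a square integrable subquotient exists precisely when these poles force every exponent transverse to $a_G$ strictly into the interior of the negative Weyl chamber. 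Quantitatively, this is equivalent to the pole order equaling $\dim a_M - \dim a_G$, the parabolic rank of $M$.

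Then I would prove the two implications separately. For sufficiency, given a residual point of maximal order, one applies Harish-Chandra's residue operator to the Plancherel formula restricted to the component of the Bernstein center associated to $(M,\sigma)$; the residue is supported on the discrete spectrum, and extracting its contribution to $\Ind_P^G(\sigma_\lambda)$ produces a nonzero square integrable subquotient. For necessity, starting from a square integrable subquotient $\pi$ and using Casselman's criterion combined with the expression of exponents via the residues of intertwining operators, one forces $\sigma_\lambda$ to satisfy pole conditions in $\dim a_M - \dim a_G$ linearly independent directions, matching exactly the pole order of $\mu$.

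The main obstacle will be identifying \emph{which} subquotient of $\Ind_P^G(\sigma_\lambda)$ picks up the residue, and verifying that it is irreducible square integrable rather than merely a tempered summand sitting on a wall. This requires a delicate analysis of the Jacquet module at residual points, controlling both the support of the residue distribution and the irreducibility of the resulting constituent; this bookkeeping, carried out through the theory of residues on arrangements of hyperplanes and the spectral decomposition of the Bernstein component, is the technical core of \cite{heiermann} and does not reduce to a simple rank-one computation.
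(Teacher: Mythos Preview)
The paper does not give a proof of this theorem; it is quoted as a result of Heiermann with reference to \cite{heiermann}*{Corollary 8.7} and used as background in Section~\ref{sec:discrete}. There is therefore no proof in the paper to compare your proposal against. Your sketch is a reasonable outline of the strategy in Heiermann's original paper---Casselman's criterion for the central character and exponent conditions, the factorization of $\mu$ into rank-one pieces, and the residue calculus on the Bernstein component for sufficiency---but since the present paper simply cites the result, no comparison is possible here.
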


Whenever the parameter $\lambda$ is in $\overline{(a_M^*)^+}$, i.e in the positive closed Weyl chamber, this residual point is called \textbf{dominant}. 

\begin{definition}[dominant residual point]
A residual point $\sigma_{\lambda}$ for $\sigma$ an irreducible cuspidal representation of a Levi $M$ is dominant if $\lambda$ is in the closed positive Weyl chamber $\overline{(a_{M}^{*})^+}$.
\end{definition}

\begin{definition}[dominant infinitesimal character]
An infinitesimal character is said to be dominant when the sequence of $q$-exponents that characterizes it corresponds to a dominant residual point under the local Langlands correspondence for tori. In particular, when its sequence of $q$-exponents is decreasing. 
\end{definition}

By observations initially made on Hecke algebras by Opdam, and partially translated by Heiermann to the context of reductive p-adic groups, it is possible to relate the set of dominant residual points to distinguished nilpotent orbits and, by Bala-Carter Theory, the distinguished nilpotent orbits are parametrized by Weighted Dynkin diagrams. In \cite{gicdijols}, one of the authors made explicit a connection between partitions of (half)-integers for distinguished nilpotent orbits and the integers occurring in the `Jordan block' that completely describe a given irreducible discrete series of classical groups as constructed by Moeglin-Tadic \cite{MT}. Let us also define a root system $\Sigma_\sigma$ (cf \cite{silberger}, 3.5) and its associated Weyl group $W_\sigma$. Its index $\sigma$ reflects the fact that this root system is associated to the cuspidal representation $\sigma$ and we notice that any $\lambda$ residual point (those characterize the existence of discrete series subquotients as noted above) is in the $W_{\sigma}$-orbit of a dominant residual point.

\begin{proposition}[Discrete Langlands parameter] \label{Langlandspar}
The restriction to $\SL_2(\C)$ of the Langlands parameter of an irreducible discrete series representation of a classical group is necessarily written as the direct sum of $\nu_i$ with \textbf{distinct} odd or even integers $i$ where $\nu_i$ stands for the irreducible representation of $\SL_2$ of dimension $i$: $\hbox{Sym}^{i-1}$. 
\end{proposition}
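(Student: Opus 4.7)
The plan is to translate discreteness of $\phi$ into a statement about a nilpotent orbit via Jacobson--Morozov, and then to invoke the Bala--Carter classification of distinguished nilpotent orbits in classical Lie algebras.

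First, I would record that by Jacobson--Morozov the restriction $\phi|_{\SL_2(\C)}$ is determined up to $\dualgroup G$-conjugacy by the nilpotent element $N \ceq d\phi\bspm 0 & 1 \\ 0 & 0 \espm \in \wh\fg$, and that after fixing an $\mathfrak{sl}_2$-triple $(e,h,f)$ through $N$, the irreducible decomposition $\phi|_{\SL_2(\C)} = \bigoplus_j \nu_{a_j}$ corresponds to the partition recording the Jordan block sizes of $N$ acting on the standard representation of $\dualgroup G$.

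Next, I would use the discreteness hypothesis: since $A_\phi$ is finite, the image of $\phi$ is not contained in any proper Levi subgroup of $\dualgroup G$. Via Jacobson--Morozov this is equivalent to $N$ being a \emph{distinguished} nilpotent element of $\wh\fg$ (or, more generally, distinguished in the centralizer of $\phi|_{W_F}$ in $\dualgroup G$, which is again a reductive subgroup of classical type). The key point, to be justified from the explicit description of centralizers, is that any non-distinguished $N$ yields a non-trivial torus factor in $A_\phi$, contradicting discreteness.

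Finally, I would invoke the Bala--Carter classification in the classical setting: distinguished nilpotent orbits in $\mathfrak{sp}_{2n}(\C)$ are parametrized by partitions of $2n$ into \emph{distinct even} parts, while distinguished nilpotent orbits in $\mathfrak{so}_m(\C)$ are parametrized by partitions of $m$ into \emph{distinct odd} parts. Translating back via Jacobson--Morozov, the restriction $\phi|_{\SL_2(\C)}$ decomposes as $\bigoplus_j \nu_{a_j}$ with the $a_j$ pairwise distinct and all of one parity, as asserted. The main obstacle, were one to seek a fully self-contained argument, is the case where $\phi|_{W_F}$ is non-trivial: one then works inside $Z_{\dualgroup G}(\phi|_{W_F})$, which splits as a product of classical groups and a central torus, and applies Bala--Carter to each classical factor. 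The parity is uniform across factors because the self-duality type of each irreducible summand $\rho \boxtimes \nu_a$ of the standard representation is forced to match that of the ambient parameter, which ties the parity of $a$ to the types of $\rho$ and $G$.
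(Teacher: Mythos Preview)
Your argument is correct and reaches the same endpoint as the paper --- the classification of distinguished nilpotent orbits in classical Lie algebras --- but the route is genuinely different. The paper works on the representation-theoretic side: it invokes Heiermann's theorem (Theorem~\ref{heiermann}) to realize the discrete series as a subquotient of an induced representation whose inducing data is a residual point, and then uses the bijection of \cite{heiermannorbit}*{Proposition~6.2} between dominant residual points and distinguished nilpotent orbits. The partition structure is read off from the residual segment, and the elliptic endoscopic subgroup ${}^{L}M_E$ appears as the natural receptacle for this data. Your argument stays entirely on the Galois side: discreteness of $\phi$ forces $N$ to be distinguished in $Z_{\dualgroup G}(\phi|_{W_F})$, and Bala--Carter in each classical factor of that centralizer gives the partition shape directly, with the parity uniformity coming from the self-duality constraint on $\rho\boxtimes\nu_a$.

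Your approach is shorter and more transparent if one is willing to take for granted that the Langlands parameter of a discrete series is discrete and that $Z_{\dualgroup G}(\phi|_{W_F})$ decomposes as a product of classical groups for classical $G$ --- both standard facts, established for these groups. The paper's approach, by contrast, threads through the Heiermann residual-point machinery and the Moeglin--Tadi\'c ``Jordan block'' description, which is heavier but sets up exactly the language (residual segments, dominant infinitesimal characters) used in the subsequent proof of Proposition~\ref{discretethm}. So the paper's detour is not wasted: it is preparing the explicit coordinates on $V_\lambda$ that the openness argument in Section~\ref{sec:discrete} relies on.
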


\begin{proof}
Although our Langlands parameters do not factor through a proper Levi of ${}^{L}G$, they factor through an elliptic endoscopic subgroup of ${}^{L}G$ that we denote ${}^{L}M_E$. In particular, the rank of the factors of this subgroup will partition the rank of ${}^{L}G$ without multiplicity (as opposed to the Levis of the classical groups where the linear group subfactor would appear twice). This set of integers is also key to characterizing the residual point appearing in the cuspidal support of the discrete series. More precisely, this residual point is in the Weyl group $W_\sigma$-orbit of a dominant residual point which will uniquely correspond to a distinguished nilpotent orbit (see \cite{heiermannorbit}*{Proposition 6.2}) of a classical Lie algebras, hence these integers are necessarily \textbf{distinct} odd or even integers partitioning the rank $n$ of ${}^{L}G$. 
\end{proof}

\begin{remark}
In their construction of discrete series representations, Moeglin and Tadic named the set of dimensions $i$ of the $\nu_i$ occurring as a factor in the discrete Langlands parameter the `Jordan block', which, along a partial cuspidal support, and the $\epsilon$-function constitute an \emph{admissible triple}.
\end{remark}

\subsection{The argument for discrete series} 

In this section, we now aim to prove that a well-identified point, $x_\phi$, of the Vogan variety $V_{\lambda_\phi}$ associated to the infinitesimal parameter $\lambda_\phi$ where $\phi$ is either a discrete or tempered Langlands parameter lies in the open orbit. When $e = \begin{pmatrix} 1& 1 \\0 & 1 \end{pmatrix}$, the point $x_\phi$ has been defined in \cite{Vogan: Langlands}*{Proposition 4.5} and \cite{CFMMX}*{Proposition 4.2.2} as: $x_\phi := \log\phi(1, e)$. To do so, we use the above definition and description of discrete Langlands parameters and observe $x_\phi$ factors through a subgroup ${}^{L}M_E$ of ${}^{L}G$ where it is immediately seen to be open in the Vogan variety $\prod_{i=1}^j V_{\lambda_i}$ attached to the decomposition of $\phi$ in $\oplus_i \nu_i$. In \cite{CFMMX} and in \cite{benesh}, the Vogan variety has been described as a product of Hom-spaces $\Hom(E_{q^i}, E_{q^{i+1}})$ where $E_{q^i}$ stands for the $q^i$-eigenspace. Here, we also use the description of the open orbit in terms of maximal ranks of maps between the eigenspaces $E_{q^i}$. 
%\newline

We denote the map ${}^{L}\eta : {}^{L}M_E \rightarrow {}^{L}G$ and aim to show that ${}^{L}\eta(x_\phi)$ will remain in the open orbit of $V_\lambda$. The variety $V_\lambda$ is usually more easily described when the infinitesimal parameter is dominant. To transform the parameter $(\lambda_1, \lambda_2, \ldots, \lambda_j)$ in a dominant infinitesimal parameter, one usually use a permutation matrix $P$. We will therefore let this permutation $P$ operate on $x_\phi$ too. The key observation will be that the non-zero entries in $x_\phi$, which also stand for maps between $E_{q^i}$-eigenspaces in $\prod_{i=1}^j V_{\lambda_i}$, will remain maps between those same eigenspaces in $V_\lambda$. Let us formulate this fact in a lemma:

\begin{lemma} \label{endo}
Let $K$ be an algebraically closed field. Let $\lambda$, and $S$ be two linear endomorphisms on a $K$-vector space, with $\lambda$ diagonalizable, such that its eigenvalues are given by $q$-exponents in the standard basis of the vector space, and $S$ of degree +1, i.e $S(E_{q^i}) =E_{q^{i+1}}, ~ \hbox{for all} ~ i$. Then, the linear endomorphism $S$ is invariant under reordering of the basis of the $K$-vector space, hence remains of degree +1.
\end{lemma}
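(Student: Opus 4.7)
The plan is to reduce the statement to the fact that the eigenspace decomposition $V = \bigoplus_i E_{q^i}$ determined by $\lambda$ is an intrinsic feature of the pair $(V,\lambda)$: it depends on $\lambda$ alone, not on the choice or ordering of a basis. The hypothesis $S(E_{q^i}) = E_{q^{i+1}}$ is a statement about $S$ and certain subspaces of $V$, not about the matrix entries of $S$ in any particular basis, so it is immediate that the property is stable under a mere relabelling of basis vectors.

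Concretely, I would model a reordering of the standard basis by conjugation by a permutation matrix $P$. Under this relabelling, $\lambda$ and $S$ are represented by the matrices $P\lambda P^{-1}$ and $PSP^{-1}$ respectively. The $q^i$-eigenspace of $P\lambda P^{-1}$ is precisely $P(E_{q^i})$: indeed, for any $v \in E_{q^i}$, one has $(P\lambda P^{-1})(Pv) = P\lambda v = q^i Pv$. Then the computation
\[
(PSP^{-1})\bigl(P(E_{q^i})\bigr) \;=\; P\bigl(S(E_{q^i})\bigr) \;=\; P(E_{q^{i+1}})
\]
shows that $PSP^{-1}$ sends the $q^i$-eigenspace of $P\lambda P^{-1}$ to its $q^{i+1}$-eigenspace, which is exactly the degree $+1$ property in the reordered basis.

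I do not expect a serious obstacle: the content of the lemma is a routine compatibility between the intrinsic eigenspace decomposition and basis permutations, and the only thing to be careful about is tracking that when one conjugates both $\lambda$ and $S$ by the same $P$ the two conjugations cancel in the middle of the composition, as in the displayed equation above. The lemma is then exactly what is needed in the surrounding argument to transport $x_\phi$ from the decomposition attached to $(\lambda_1,\dots,\lambda_j)$ to the dominant-ordered decomposition used to describe $V_\lambda$, without disturbing the maps between eigenspaces that witness openness of the orbit.
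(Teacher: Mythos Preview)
Your proof is correct and follows essentially the same approach as the paper's: both observe that the eigenspace decomposition is intrinsic to $\lambda$ and hence that the degree $+1$ property of $S$ is preserved under change of basis. The paper's proof is in fact just the one-line remark that ``linear endomorphisms are invariant under change of basis,'' whereas you have spelled out the conjugation computation explicitly.
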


\begin{proof}
Through $\lambda$, with eigenvalues of the form $q^i$'s, we can decompose a $K$-vector space $V$ into eigenspaces, $E_{q^i}$'s. It is well-known that linear endomorphisms are invariant under change of basis. 
\end{proof}

\subsection{An example with \texorpdfstring{$\SO_7$}{}}

Let us consider an irreducible unramified discrete series representation $\pi$ of the p-adic $\SO_7$. Its Langlands dual group is $\Sp_6$, of rank 6. Since it is discrete, its Langlands parameter needs to factor through the ${}^{L}M_E$ subgroup whose factor have rank either one of the two partitions of 6 into distinct even integers: $\left\{6\right\}$ and $\left\{2, 4\right\}$. Let us consider the second partition. It parametrizes the dimension of each of $\nu_i$ occurring in the Langlands parameter: $\phi_{\pi}= \nu_2\oplus \nu_4$. By Theorem \ref{heiermann}, $\pi$ is a subquotient in $I_Q^G(\sigma_{\nu})$ with $\nu$ a dominant residual point. The distinguished nilpotent orbit corresponding to this dominant residual point is exactly characterized by the partition $\left\{2, 4\right\}$. 
From those integers we get the residual segments $(3/2, 1/2, -1/2, -3/2)$ and $(1/2, -1/2)$. Concatenating them, we get the sequence of exponents: $(3/2, 1/2, -1/2, -3/2, 1/2, -1/2)$. Then the infinitesimal parameter $\lambda(\Fr)$ is the diagonal matrix:
$$(q^{3/2},q^{1/2},q^{-1/2},q^{-3/2},q^{1/2},q^{-1/2})$$
We let $x_{\phi_{\pi}}$ be the Jordan form of $\Sym^3(\begin{pmatrix}
1 & 1  \\
0 & 1 
\end{pmatrix})\oplus \Sym^1(\begin{pmatrix}
1 & 1  \\
0 & 1 
\end{pmatrix})$.

$$x_{\phi_{\pi}}= \left(\begin{array}{rrrrrr}
0 & 1 & 0 & 0 & 0 & 0 \\
0 & 0 & 1 & 0 & 0 & 0 \\
0 & 0 & 0 & 1 & 0 & 0 \\
0 & 0 & 0 & 0 & 0 & 0 \\
0 & 0 & 0 & 0 & 0 & 1 \\
0 & 0 & 0 & 0 & 0 & 0
\end{array}\right)$$
Let us take a closer look at the non-zero entries of $x_{\phi_{\pi}}$, starting from the first line: 
$$\lambda(Fr)+x_{\phi_{\pi}}= \left(\begin{array}{rrrrrr}
q^{3/2} & 1 & 0 & 0 & 0 & 0 \\
0 & q^{1/2} & \textcolor{green}{1} & 0 & 0 & 0 \\
0 & 0 & q^{-1/2} & \textcolor{red}{1} & 0 & 0 \\
0 & 0 & 0 & q^{-3/2} & 0 & 0 \\
0 & 0 & 0 & 0 & q^{1/2} & \textcolor{blue}{1} \\
0 & 0 & 0 & 0 & 0 & q^{-1/2}
\end{array}\right)$$
The permutation matrix has to permute the fourth and sixth entries of the diagonal, and the third and fifth: 
$$P=\left(\begin{array}{rrrrrr}
1 & 0 & 0 & 0 & 0 & 0 \\
0 & 1 & 0 & 0 & 0 & 0 \\
0 & 0 & 0 & 0 & 1 & 0 \\
0 & 0 & 0 & 0 & 0 & 1 \\
0 & 0 & 1 & 0 & 0 & 0 \\
0 & 0 & 0 & 1 & 0 & 0
\end{array}\right)$$

$$\hbox{Then} ~~ Px_{\phi_{\pi}} P= \left(\begin{array}{rrrrrr}
0 & 1 & 0 & 0 & 0 & 0 \\
0 & 0 & 0 & 0 & \textcolor{green}{1} & 0 \\
0 & 0 & 0 & \textcolor{blue}{1} & 0 & 0 \\
0 & 0 & 0 & 0 & 0 & 0 \\
0 & 0 & 0 & 0 & 0 & \textcolor{red}{1} \\
0 & 0 & 0 & 0 & 0 & 0
\end{array}\right)$$

By switching the $2\times 2$ blocks containing the couple of entries $(q^{-1/2}, q^{-3/2})$ and $(q^{1/2}, q^{-1/2})$, one gets the expected result. The entries given by the blue and the red $1$ get switched. As the $q^{-1/2}$ diagonal entry goes down on the diagonal by 2 steps, the green $1$ on the second line, get translated rightward by 2. 
As we noticed with this example the number of desired maps between $q$-eigenspaces (given by these $1$) is preserved through this process. This explains why we should get as many maps as multiplicities of (positive) exponents. Further, we will see that the ranks of the compositions of such maps will also remain maximal, implying that $x_\phi= \log\phi(1, e)$ lies in the open orbit. 

\begin{remark}
Concretely, as a result of the Lemma \ref{endo}, we have that the $1$'s, which are initially at the intersection of a line (the one of a given eigenvalue) and a given column, follow this line and column throughout the various transpositions of the permutation matrix. After conjugation by the permutation matrix, a given block matrix containing such $1$'s necessarily has them on independent lines and columns. If this block matrix is a square matrix, it is itself a permutation matrix. \end{remark}

\subsection{An example with \texorpdfstring{$\Sp_{14}$}{}}

Let us consider an irreducible unramified discrete series representation $\pi$ of the p-adic $\Sp_{14}$. Its Langlands dual group is $\SO_{15}$, of rank 15. We choose the decomposition of 15 in $\{7,5,3\}$ and construct the corresponding infinitesimal parameter. Let us take the following Langlands parameter $\lambda$ and apply a permutation matrix to reorganize it so that the exponents are in decreasing order. We also wrote the point $x_{\phi}$ within the same matrix, and it corresponds only to the 1 entries just above the diagonal. We will observe how the permutation matrix will move those entries in particular. Note that we focus our attention on the upper-half part of the matrix, as by symmetry, the blocks matrices that constitute the Vogan variety on the lower-half will the same as the upper half but transposed. 
$$\left(\begin{array}{rrrrrrrrrrrrrrr}
q^{3} & 1 & 0 & 0 & 0 & 0 & 0 & 0 & 0 & 0 & 0 & 0 & 0 & 0 & 0 \\
0 & q^{2} & \textcolor{red}{1} & 0 & 0 & 0 & 0 & 0 & 0 & 0 & 0 & 0 & 0 & 0 & 0 \\
0 & 0 & q & \textcolor{blue}{1} & 0 & 0 & 0 & 0 & 0 & 0 & 0 & 0 & 0 & 0 & 0 \\
0 & 0 & 0 & 1 & 1 & 0 & 0 & 0 & 0 & 0 & 0 & 0 & 0 & 0 & 0 \\
0 & 0 & 0 & 0 & \frac{1}{q} & 1 & 0 & 0 & 0 & 0 & 0 & 0 & 0 & 0 & 0 \\
0 & 0 & 0 & 0 & 0 & \frac{1}{q^{2}} & 1 & 0 & 0 & 0 & 0 & 0 & 0 & 0 & 0 \\
0 & 0 & 0 & 0 & 0 & 0 & \frac{1}{q^{3}} & 0 & 0 & 0 & 0 & 0 & 0 & 0 & 0 \\
0 & 0 & 0 & 0 & 0 & 0 & 0 & q^{2} & \textcolor{orange}{1} & 0 & 0 & 0 & 0 & 0 & 0 \\
0 & 0 & 0 & 0 & 0 & 0 & 0 & 0 & q & \textcolor{teal}{1} & 0 & 0 & 0 & 0 & 0 \\
0 & 0 & 0 & 0 & 0 & 0 & 0 & 0 & 0 & 1 & 1 & 0 & 0 & 0 & 0 \\
0 & 0 & 0 & 0 & 0 & 0 & 0 & 0 & 0 & 0 & \frac{1}{q} & 1 & 0 & 0 & 0 \\
0 & 0 & 0 & 0 & 0 & 0 & 0 & 0 & 0 & 0 & 0 & \frac{1}{q^{2}} & 0 & 0 & 0 \\
0 & 0 & 0 & 0 & 0 & 0 & 0 & 0 & 0 & 0 & 0 & 0 & q & \textcolor{cyan}{1} & 0 \\
0 & 0 & 0 & 0 & 0 & 0 & 0 & 0 & 0 & 0 & 0 & 0 & 0 & 1 & 1 \\
0 & 0 & 0 & 0 & 0 & 0 & 0 & 0 & 0 & 0 & 0 & 0 & 0 & 0 & \frac{1}{q} 
\end{array}\right)$$

Below, we see how the permutation matrix acts on the colored non-zero entries of the upper-half matrix:

$$\left(\begin{array}{rrrrrrrrrrrrrrr}
q^{3} & \textbf{1} & 0 & 0 & 0 & 0 & 0 & 0 & 0 & 0 & 0 & 0 & 0 & 0 & 0 \\
0 & q^{2} & 0 & \textcolor{red}{\textbf{1}} & 0 & 0 & 0 & 0 & 0 & 0 & 0 & 0 & 0 & 0 & 0 \\
0 & 0 & q^2 & 0 & 0 & \textcolor{orange}{\textbf{1}} & 0 & 0 & 0 & 0 & 0 & 0 & 0 & 0 & 0 \\
0 & 0 & 0 & q & 0 & 0 & 0 & \textcolor{blue}{\textbf{1}} & 0 & 0 & 0 & 0 & 0 & 0 & 0 \\
0 & 0 & 0 & 0 & q & 0 & 0 & 0 & \textcolor{cyan}{\textbf{1}} & 0 & 0 & 0 & 0 & 0 & 0 \\
0 & 0 & 0 & 0 & 0 & q & \textcolor{teal}{\textbf{1}} & 0 & 0 & 0 & 0 & 0 & 0 & 0 & 0 \\
0 & 0 & 0 & 0 & 0 & 0 & 1 & 0 & 0 & 0 & 0 & 0 & 0 & 0 & 0 \\
0 & 0 & 0 & 0 & 0 & 0 & 0 & 1 & 0 & 0 & 0 & 0 & 0 & 0 & 0 \\
0 & 0 & 0 & 0 & 0 & 0 & 0 & 0 & 1 & \textbf{1} & 0 & 0 & 0 & 0 & 0 \\
0 & 0 & 0 & 0 & 0 & 0 & 0 & 0 & 0 & \frac{1}{q} & 0 & 0 & 0 & 0 & 0 \\
0 & 0 & 0 & 0 & 0 & 0 & 0 & 0 & 0 & 0 & \frac{1}{q} & 0 & 0 & 0 & 0 \\
0 & 0 & 0 & 0 & 0 & 0 & 0 & 0 & 0 & 0 & 0 & \frac{1}{q} & 0 & 0 & 0 \\
0 & 0 & 0 & 0 & 0 & 0 & 0 & 0 & 0 & 0 & 0 & 0 & \frac{1}{q^2} & 0 & 0 \\
0 & 0 & 0 & 0 & 0 & 0 & 0 & 0 & 0 & 0 & 0 & 0 & 0 & \frac{1}{q^2} & 0 \\
0 & 0 & 0 & 0 & 0 & 0 & 0 & 0 & 0 & 0 & 0 & 0 & 0 & 0 & \frac{1}{q^3}
\end{array}\right)$$

Let us now show the block matrices which will constitute the Vogan variety to see in which orbit the element $x_\phi$ embeds. Again, we focus here our attention on the upper-half of the matrix. 

$$\left(\begin{array}{rrrrrrrrrrrrrrr}
q^{3} & \textbf{1} & * & 0 & 0 & 0 & 0 & 0 & 0 & 0 & 0 & 0 & 0 & 0 & 0 \\
0 & q^{2} & 0 & \textbf{1} & * & * & 0 & 0 & 0 & 0 & 0 & 0 & 0 & 0 & 0 \\
0 & 0 & q^2 & * & * & \textbf{1} & 0 & 0 & 0 & 0 & 0 & 0 & 0 & 0 & 0 \\
0 & 0 & 0 & q & 0 & 0 & * & \textbf{1} & * & 0 & 0 & 0 & 0 & 0 & 0 \\
0 & 0 & 0 & 0 & q & 0 & * & * & \textbf{1} & 0 & 0 & 0 & 0 & 0 & 0 \\
0 & 0 & 0 & 0 & 0 & q & \textbf{1} & * & * & 0 & 0 & 0 & 0 & 0 & 0 \\
0 & 0 & 0 & 0 & 0 & 0 & 1 & 0 & 0 & 0 & 0 & 0 & 0 & 0 & 0 \\
0 & 0 & 0 & 0 & 0 & 0 & 0 & 1 & 0 & 0 & 0 & 0 & 0 & 0 & 0 \\
0 & 0 & 0 & 0 & 0 & 0 & 0 & 0 & 1 & \textbf{1} & 0 & 0 & 0 & 0 & 0 \\
0 & 0 & 0 & 0 & 0 & 0 & 0 & 0 & 0 & \frac{1}{q} & 0 & 0 & 0 & 0 & 0 \\
0 & 0 & 0 & 0 & 0 & 0 & 0 & 0 & 0 & 0 & \frac{1}{q} & 0 & 0 & 0 & 0 \\
0 & 0 & 0 & 0 & 0 & 0 & 0 & 0 & 0 & 0 & 0 & \frac{1}{q} & 0 & 0 & 0 \\
0 & 0 & 0 & 0 & 0 & 0 & 0 & 0 & 0 & 0 & 0 & 0 & \frac{1}{q^2} & 0 & 0 \\
0 & 0 & 0 & 0 & 0 & 0 & 0 & 0 & 0 & 0 & 0 & 0 & 0 & \frac{1}{q^2} & 0 \\
0 & 0 & 0 & 0 & 0 & 0 & 0 & 0 & 0 & 0 & 0 & 0 & 0 & 0 & \frac{1}{q^3}
\end{array}\right)$$

Let us call $w$ the vector $(u,v)$ on the first line, $X$ the $2\times 3$ matrix on the second and third line, and $Z$ the $3\times 3$ matrix below. Then the Vogan variety is $V_\lambda= \left\{w, X, Z \right\}$. We see that $w \neq 0$ and the ranks of $X$ and $Z$ are maximal. Further, as the result of the Lemma \ref{endo}, the products of $w$ with $X$ and $X$ with $Z$, as well as the product of the three, necessarily have maximal ranks. Indeed, the property of each of the maps (the "1"'s), which is to send an eigenvector in $E_{q^i}$ to $E_{q^{i+1}}$, imply there will be enough compositions of such maps after application of the permutation matrix (i.e the ranks of the product matrices are maximal). Therefore it is clear that this element lies in the open orbit. 

\vspace{0,2cm}

As we presented the following proposition in a seminar, Alexander Meli brought to our attention the existence of another proof in a recent paper \cite{meli}*{Proposition 3.18}.

\begin{proposition} \label{discretethm}
Let $\pi$ be an irreducible discrete series of a split classical group $G$ or any of its pure inner forms, $\phi_{\pi}$ its Langlands parameter. Then $\phi_{\pi}$ is open in $V_{\lambda}$.
\end{proposition}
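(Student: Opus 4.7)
My plan is to promote the two worked examples ($\SO_7$ and $\Sp_{14}$) to a general argument, using the explicit description of discrete Langlands parameters in Proposition \ref{Langlandspar} together with the rank-characterization of the open orbit of $V_\lambda$ via the eigenspace decomposition of $\dualgroup{\g}$. The starting input is that $\phi_\pi|_{\SL_2(\CC)}$ decomposes as $\bigoplus_{i=1}^{k}\nu_{d_i}$ with the $d_i$ \emph{distinct} odd or even integers, so that $\phi_\pi$ factors through an elliptic endoscopic subgroup ${}^L\eta: {}^L M_E \hookrightarrow \Lgroup{G}$, and the infinitesimal parameter $\lambda=\lambda_{\phi_\pi}$ is the concatenation of the residual segments attached to the $d_i$. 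In particular, each weight space $E_{q^j}$ for $\lambda(\Fr)$ has dimension equal to the number of segments covering $q^j$, and $x_{\phi_\pi}=\log\phi_\pi(1,e)$ is (before reordering) a direct sum of Jordan blocks of sizes $d_1,\dots,d_k$.

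Next I would conjugate by a permutation matrix $P$ that sorts the eigenvalues of $\lambda(\Fr)$ into a decreasing (dominant) sequence, so that the conjugated element $x\ceq P\, x_{\phi_\pi}\, P^{-1}$ lies in the Vogan variety $V_\lambda$ attached to the dominant form of $\lambda$ and can be written as a tuple of linear maps $x_j: E_{q^j}\to E_{q^{j+1}}$, following the block description of $V_\lambda$ recalled from \cite{CFMMX} and \cite{benesh}. By Lemma \ref{endo}, each nonzero entry of $x_{\phi_\pi}$ (all of them of ``degree $+1$'' with respect to $\lambda$) is transported to a nonzero entry of $x$ still landing in the block $\Hom(E_{q^j},E_{q^{j+1}})$: the permutation $P$ permutes basis vectors within eigenspaces without changing which eigenspace each basis vector belongs to.

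The heart of the argument is then combinatorial, and I expect it to be the main obstacle: I must show that the resulting $x_j$ are of maximal rank for every $j$, which by Proposition \ref{lemma:open} and the prehomogeneity of $V_\lambda$ is equivalent to $x$ lying in the open $H_\lambda$-orbit. The distinctness of the $d_i$ is what makes this work. Indeed, the segment for $\nu_{d_i}$ runs from $q^{(d_i-1)/2}$ down to $q^{-(d_i-1)/2}$ in integer-step increments, and within the $i$-th block the Jordan nilpotent sends one basis vector of $E_{q^j}$ to a basis vector of $E_{q^{j+1}}$; because the segments are strictly nested (distinct $d_i$ yield segments of distinct extents in the combined weight list), no two of these basis-vector assignments share a source or a target after applying $P$. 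Hence each $x_j$ is block-diagonal with invertible $1\times 1$ blocks, so injective when $\dim E_{q^j}\le\dim E_{q^{j+1}}$ and surjective in the opposite case, which is exactly rank-maximality.

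Finally I would note two easy extensions. For \emph{pure inner forms} of a split classical $G$, the group $\dualgroup{G}$, the variety $V_\lambda$ and the orbit structure are unchanged, so openness of $x_{\phi_\pi}$ transfers without modification. For the tempered case (used in the corollary following the proposition), one writes a tempered $\phi$ as ${}^L\iota\circ\phi_M$ with $\phi_M$ discrete on a Levi ${}^L M \hookrightarrow \Lgroup{G}$; the embedding respects the eigenspace block decomposition and so carries the open orbit of $V_{\lambda_M}$ into the open orbit of $V_\lambda$, matching the conclusion of Proposition \ref{prop: temper=arthur+open} by a purely linear-algebra route. The only delicate point in the entire proof remains the combinatorial verification that the permutation $P$ never collapses two $+1$ entries of $x_{\phi_\pi}$ onto the same row or column of a block $\Hom(E_{q^j},E_{q^{j+1}})$; this is precisely where the hypothesis that the $d_i$ are distinct is used.
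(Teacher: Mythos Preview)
Your overall strategy matches the paper's proof exactly: use Proposition~\ref{Langlandspar} to decompose $\phi_\pi|_{\SL_2(\CC)}$ into $\bigoplus_i\nu_{d_i}$ with distinct $d_i$, pass to the dominant form of $\lambda$ by a permutation $P$, invoke Lemma~\ref{endo} to track the degree-$+1$ entries, and then check rank conditions on the blocks $x_j\in\Hom(E_{q^j},E_{q^{j+1}})$.

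There is, however, a genuine gap in your final step. You assert that ``the resulting $x_j$ are of maximal rank for every $j$, which by Proposition~\ref{lemma:open} and the prehomogeneity of $V_\lambda$ is equivalent to $x$ lying in the open $H_\lambda$-orbit.'' This equivalence is false in general: the $H_\lambda$-orbits in $V_\lambda$ are parametrized by the full rank triangle, i.e.\ the ranks of \emph{all} compositions $x_{j+m-1}\circ\cdots\circ x_j$, not just the individual $x_j$ (see Remark~\ref{ranksrmk} and the $\Sp_{14}$ example, where the paper explicitly checks that ``the products of $w$ with $X$ and $X$ with $Z$, as well as the product of the three, necessarily have maximal ranks''). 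For instance, with eigenspace dimensions $1,2,1$ one can have both $x_1$ and $x_2$ of rank $1$ while $x_2\circ x_1=0$, which is \emph{not} the open orbit. Proposition~\ref{lemma:open} says nothing about this; it concerns Pyasetskii duality of the open and closed orbits, not a rank characterization.

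The fix is already implicit in what you wrote: your observation that after applying $P$ each block $x_j$ has at most one nonzero entry per row and per column (so is a sub-permutation matrix, not merely ``block-diagonal'') is exactly what guarantees that the \emph{compositions} also have maximal rank, because the nested segments never share an endpoint and the $1$'s in consecutive blocks line up along the same segment. The paper phrases this as $S^n(E_{q^i})=E_{q^{i+n}}$. You should replace the appeal to Proposition~\ref{lemma:open} with this explicit argument about compositions; once that is in place your proof and the paper's coincide.
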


\begin{proof}
Let $\pi$ be an irreducible discrete series of a classical group $G$ whose set of pure inner forms contains at least one split form, and take ${}^{L}G$ its Langlands dual group of rank $n$ (which is shared by all its pure inner forms). Using the Theorem \ref{heiermann}, we know this irreducible discrete series is a subquotient of $I_Q^G(\sigma_{\lambda})$, with $\lambda$ a residual point. Using the Proposition \ref{Langlandspar}, to each such irreducible discrete series, we can associate a partition of $n$ into distinct odd or even integers, $(\Lambda_1, \Lambda_2, \ldots ,\Lambda_m)$, such that $\phi_\pi|_{SL_2(\C)}$ is the sum of the $\nu_{\Lambda_j}$. Evaluate each of the $\nu_i= \hbox{Sym}^{i-1}$ where $i$ runs over the $\Lambda_j$ for $j \in \{1, \ldots m\}$ at $d_w$ to identify the infinitesimal parameter $\lambda(w)$. The sequence of $|w|$-exponents in $\lambda(w)$, whenever reordered to be decreasing, is called a "residual segment" (see \cite{gicdijols}*{Definition 4.2}), and can alternatively be understood as a dominant residual point by \cite{heiermannorbit}*{Section 2.5}.

Using this infinitesimal parameter, we build the Vogan variety. As the Langlands parameter of a discrete series factors through the subgroup ${}^{L}M_E$ of ${}^{L}G$, we can write $V_{\lambda} = \prod_i V_{\lambda_i}$, where $\lambda_i$ is the residual segment corresponding to $\nu_i(d_w)$. Note that by \cite{CFMMX}*{Lemma 5.3}, unramification of the Langlands parameter leaves the Vogan variety intact, so we may as well assume the infinitesimal parameter is unramified. Since $\lambda_i$ is a strictly decreasing sequence of (half)-integers, the Vogan variety $V_{\lambda_i}$ corresponds to the maximal Jordan block matrix of rank the length of $\lambda_i$. Further since $\phi_\pi|_{\SL_2(\C)}= \bigoplus_i\nu_i$, we can consider the point $x_{\phi_\pi}$ as a tuple of $x_{\nu_i}$. With $\nu_i$ an irreducible representation of $\SL_2(\C)$ of dimension $i$, $x_{\nu_i}$, once written in Jordan form, is a matrix with only one block of maximal dimension $i$ (i.e is an element in a nilpotent regular orbit), that is, whose non-zero entries exactly match the ones of $V_{\lambda_i}$. The $1$'s in such Jordan block constitutes a linear endomorphism of degree +1 as in Lemma \ref{endo}.

We now reorganize the infinitesimal parameter so that $q$-exponents on the diagonal are decreasing, and describe the Vogan variety using this dominant infinitesimal parameter. The same operation (conjugation by a permutation matrix) is applied to $x_{\phi_{\pi}}$. For each $q$-exponent $i$, we need to show there are as many maps between the $q^i$-eigenspaces and the $q^{i+1}$-eigenspaces (which form one of the block matrices constitutive of $V_\lambda$) as the multiplicity of the $q^{i+1}$ entry in the diagonal matrix (the rank of the block matrix). This means the rank of maps from $E_{q^i}$ to $E_{q^{i+1}}$ is maximal, for each $i$. Further, the compositions of such maps need also to be of maximal ranks. This follows from the understanding that there were "enough" such maps in the initial expression of $x_{\phi} \in \prod_1^jV_i$ and that these maps were preserved when applying the permutation matrix. This follows by Lemma \ref{endo}. Further, since for each $i$, we have that this linear endomorphism $S$ satisfies $S(E_{q^i})=E_{q^{i+1}}$, it implies that $S^n(E_{q^i})=E_{q^{i+n}}$, whenever this expression makes sense. This means the compositions of maps between eigenspaces need also be of maximal ranks. This directly implies that $x_{\phi_\pi}$ lies in the open orbit.

As explained in \cite{CFMMX}*{Section 10.2.1} when passing from the case of a group of type $A_n$ to the classical forms of $B_n, C_n$ or $D_n$ simply results in an identification of the $q^i$-eigenspace of $\lambda(\Fr)$ with the dual of the the $q^{-i}$-eigenspace: $E_{q^i} \cong E_{q^-i}^*$. Using this identification, Joël Benesh has proved in its master thesis \cite{benesh}*{Proposition 3.21} that whenever we deal with root systems of type $B$ (resp. $C$), and with integral (resp. half-integral) powers of $q$, the Vogan variety is: $\Hom(E_{q^1}, E_{q^0})\times \Hom(E_{q^2}, E_{q^1})\times \dots \Hom(E_{q^\ell}, E_{q^{\ell-1}})$  (resp. $\Hom(E_{q^1}, E_{q^0})\times \Hom(E_{q^2}, E_{q^1})\times \dots \Hom(E_{q^\ell}, E_{q^{\ell-1}})\times \Sym^2(E_{q^\ell})$ where $\ell = [k/2]$ for $k$ the number of distinct eigenvalues of $\lambda$. This means that the symmetrical aspect of classical groups allows us to argue using only the positive $q$-eigenspaces.
\end{proof}

\begin{remark} \label{ranksrmk}
The reader familiar with the dictionary between multisegments and rank triangles (see for instance a nice account in \cite{CR}) would note that the shape of $x_{\phi_\pi} \in {}^{L}\m_E$ correspond to rank triangles with 1 on the first lines. We  may denote $x_{ii-1} \in \Hom(E_{q^i}, E_{q^{i-1}})$ and note that the orbit of $V_\lambda$ under $H_\lambda$ are parametrized by the ranks of the $x_{ii-1}$ and $x_{ij}$, leading to the notion of rank triangles. Here, we have multiplicities 1, and the ranks of the $x_{ii-1}$ equal to 1 for each $\lambda_i, ~ i \in \{1, 2, \ldots, j\}$. As the ranks of the $x_{ii-1}$ are maximal (i.e equal to the multiplicities), and further the ranks of their compositions are also maximal, our orbit in ${}^{L}\m_E$ is the open one. When we apply the permutation matrix and consider now the parameter $\lambda=(\lambda_1, \ldots, \lambda_j)$ in ${}^{L}\g$, we just \textit{add} the multiplicities and the ranks of each rank triangle. Then the ranks remain equal to their maximal values (in particular, equal to the multiplicities on the first line of the rank triangle) and we land once more in the open orbit in ${}^{L}\g$. A key observation, justifying why we are just \textit{adding} them, is the fact that our segments are nested. 
\end{remark}

%To a point $x_{\phi_{\pi}}$ in $V_{\lambda}$ we can canonically associate a pair of points $(x_{\nu_1},x_{\nu_2})$ in $V_{\lambda_1}\times V_{\lambda_2}$ since $V_{\lambda_1}\times V_{\lambda_2}$ is isomorphic to $V_{\lambda}$. 
%$$(x_{\nu_4},x_{\nu_2})= \Bigl( \begin{pmatrix}
%0& 1 & 0 & 0\\
%0 & 0 & 1 & 0\\
%0 & 0 & 0 & 1\\
%0 & 0 & 0 & 0\\
% \end{pmatrix} ; \begin{pmatrix}
%0& 1 \\
%0 & 0 \end{pmatrix} \Bigr)$$
%whereas 
%$$V_{\lambda_4}\times V_{\lambda_2}= \begin{pmatrix}
%0& * & 0 & 0\\
%0 & 0 & * & 0\\
%0 & 0 & 0 & *\\
%0 & 0 & 0 & 0\\
% \end{pmatrix} \times \begin{pmatrix}
%0& * \\
%0 & 0 \end{pmatrix}$$
%
%Now, for both $V_{\lambda_2}$ and $V_{\lambda_4}$, each $q$-eigenspace is one-dimensional, so the maps between consecutive $q$-eigenspaces are all rank one, i.e are of maximal rank possible. Therefore, $(x_{\nu_4},x_{\nu_2})$ is open in $V_{\lambda_4}\times V_{\lambda_2}$, which implies that $x_{\phi_{\pi}}$ is open in $V_{\lambda}$. 

\subsection{The tempered case}

We will use the characterization given in \cite{wald}*{Proposition III.4.1} of (essentially) tempered representations: \textit{Essentially tempered (non-discrete series) representations of a group $G$ are subquotients (equivalently, direct summand) of the normalized parabolic induction of an essentially discrete series representation of a Levi subgroup. }
Let us recall the desiderata for the Local Langlands correspondence as given by Borel (\cite{Borel:Corvallis}, and see for instance \cite{abps}). We need to introduce enhanced Langlands parameters in the sense that we add an irreducible complex representation $\rho$ of the $S$-group of $\phi$: $S_\phi =Z_{\hat{G}}(\phi)\slash Z_{\hat{G}}(\phi)^0Z(\hat{G})^{\Gamma_F}$.

\begin{desiderata}[Desiderata for the Local Langlands Correspondence, Borel \cite{Borel:Corvallis}]\label{desiderata}
\leavevmode
\begin{itemize}
\item We assume $(\phi_M , \rho_M ) \in \Phi_e(M)$ is bounded. Then
\begin{equation} \label{eq1}
\left\{\pi_{\phi, \rho}: \phi = \phi_M ~~ \hbox{composed with} ~~ {}^{L}\eta: {}^{L}M \rightarrow {}^{L}G, \rho|_{S_{\phi^M}}
~ \hbox{contains} ~ \rho_M \right\} 
\end{equation}
equals the set of all irreducible constituents of the parabolically induced representation $I_P^G(\pi_{\phi_M,\rho_M})$.
\item Furthermore if $\phi_M$ is discrete but not necessarily bounded then \eqref{eq1} is the set of Langlands constituents of $I_P^G(\pi_{\phi_M,\rho_M})$.
\end{itemize}
\end{desiderata}

\begin{proposition} \label{temp}
Let $\pi$ be an irreducible tempered representation of a split classical group $G$ or any of its pure inner forms, $\phi_{\pi}$ its Langlands parameter. Then $x_{\phi_{\pi}}$ is open in $V_{\lambda}$.
\end{proposition}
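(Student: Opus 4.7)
The strategy is to reduce the tempered case to the discrete series case of Proposition~\ref{discretethm} via parabolic induction, then extend the permutation-matrix argument used there to the combined Vogan variety.

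First I would use the characterization that $\pi$ is a direct summand of the normalized parabolic induction $I_P^G(\sigma)$, where $P=MU$ is a standard parabolic and $\sigma$ is an essentially discrete series of the Levi $M$. For a split classical group, $M$ has the form $\prod_{i=1}^k \GL_{n_i} \times G_0$, where $G_0$ is a classical group of the same type. Correspondingly, $\sigma = \sigma_1\otimes\cdots\otimes\sigma_k\otimes\sigma_0$ factors into essentially discrete series of the individual factors. By Desiderata~\ref{desiderata}, the Langlands parameter factors as $\phi_\pi = {}^L\eta\circ\phi_M$, where $\phi_M = \phi_{\sigma_1}\oplus\cdots\oplus\phi_{\sigma_k}\oplus\phi_{\sigma_0}$ and ${}^L\eta : {}^LM\to {}^LG$ is the standard embedding. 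Under this embedding, each $\GL_{n_i}$-parameter $\phi_{\sigma_i}$ is replaced inside ${}^L G$ by the dual pair $\phi_{\sigma_i}\oplus\phi_{\sigma_i}^{\vee}$, while the classical-group piece $\phi_{\sigma_0}$ sits naturally.

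Next I would apply Proposition~\ref{discretethm} to each factor: each $x_{\phi_{\sigma_i}}$ lies in the open orbit of its Vogan variety $V_{\lambda_{\sigma_i}}$, and the same for $\phi_{\sigma_0}$. Thus $x_{\phi_M}=(x_{\phi_{\sigma_1}},\dots,x_{\phi_{\sigma_0}})$ is in the open orbit of $V_{\lambda_M}=\prod_i V_{\lambda_{\sigma_i}}\times V_{\lambda_{\sigma_0}}$. Each factor contributes a maximal Jordan block (or a dual pair of maximal Jordan blocks, coming from the GL factor and its dual), so the nilpotent operator $x_{\phi_\pi}$ acts on the sum of eigenspaces as a direct sum of degree-+1 linear maps between consecutive $q$-eigenspaces in the sense of Lemma~\ref{endo}.

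I would then run the permutation-matrix argument exactly as in the $\SO_7$ and $\Sp_{14}$ examples: reorder the diagonal of $\lambda(\Fr)$ to be decreasing (dominant) and conjugate $x_{\phi_\pi}$ by the corresponding permutation $P$. By Lemma~\ref{endo}, the non-zero entries of $x_{\phi_\pi}$, being the data of degree-+1 maps $E_{q^i}\to E_{q^{i+1}}$, survive this reordering as maps of the same degree. Using the explicit description of $V_\lambda$ for classical groups (after \cite{benesh}*{Proposition 3.21} and \cite{CFMMX}*{\S 10.2.1}), the openness condition is precisely that for every $i$ the block $x_{i,i-1}\in\Hom(E_{q^i},E_{q^{i-1}})$ has rank equal to $\min(\dim E_{q^i},\dim E_{q^{i-1}})$, and that the iterated compositions $x_{i,i-n}$ also attain maximal rank. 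Both conditions reduce, via the permutation, to the corresponding maximal-rank statements already established on $V_{\lambda_M}$ by Proposition~\ref{discretethm}.

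The main obstacle is bookkeeping the new multiplicities introduced when the $\GL_{n_i}$-factors pass through ${}^L\eta$: each contributes both a segment and its dual, possibly overlapping with other segments and with $\phi_{\sigma_0}$, so the eigenspaces $E_{q^i}$ can now have multiplicity strictly greater than one, and the corresponding $x_{i,i-1}$ is a rectangular matrix whose target and source dimensions must be checked to accept a map of full rank. The essential observation, as in Remark~\ref{ranksrmk}, is that since each individual factor contributed a \emph{nested} residual segment whose degree-+1 operator is already of rank one on each summand, the sum of these rank-one maps across independent eigenvectors, after the permutation, fills up exactly the required $\min(\dim E_{q^i},\dim E_{q^{i-1}})$ independent image-source pairs. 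Since the segments are nested, no cancellation occurs when multiplicities add, and the ranks (as well as the ranks of iterated compositions) remain maximal. This places $x_{\phi_\pi}$ in the open $H_\lambda$-orbit of $V_\lambda$.
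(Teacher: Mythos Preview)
Your proposal is correct and follows essentially the same route as the paper's own proof: reduce to the discrete case via parabolic induction using Waldspurger's characterization and the Borel desiderata, decompose $\phi_M$ along the Levi $\prod \GL_{n_i}\times G_0$, invoke Proposition~\ref{discretethm} on each factor, and then carry the permutation-matrix/Lemma~\ref{endo} argument through, the key point being that the segments contributed by the $\GL$ factors and by $\phi_{\sigma_0}$ are nested or disjoint so that ranks add as in Remark~\ref{ranksrmk}. Your treatment is in fact slightly more explicit than the paper's (you spell out the dual pair $\phi_{\sigma_i}\oplus\phi_{\sigma_i}^\vee$ and the precise maximal-rank condition on the $x_{i,i-1}$), but the strategy and the lemmas invoked are the same.
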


\begin{proof}
Since $\pi$ is an irreducible tempered representation, it is a direct summand of the normalized parabolic induction of an essentially discrete series representation of a Levi subgroup. Its Langlands parameter $\phi_\pi$ is equal to ${}^{L}\eta(\phi_M)$ with $\phi_M$ a discrete unbounded parameter. For a discrete unbounded parameter $\phi$, we have shown in Theorem \ref{discretethm} that $x_{\phi}$ lies in the open orbit. We then need to show that ${}^{L}\eta$ preserves this openness property. Note that the Levi subgroup of $G$, a classical group, is always a product of general linear subgroups with a classical subgroup of the same type as $G$, implying that $\phi_M= (\phi_{\GL}, \phi_C)$, where $\phi_{\GL}$ denotes the parameter of the (product of) general linear subgroup(s) and $\phi_C$ the parameter of the classical subgroup. Both are open, and the infinitesimal parameter attached to them is also a pair, $(\lambda_{\GL}, \lambda_C)$, where $\lambda_C$ has been lengthily described when considering discrete parameters, and $\lambda_{\GL}$ is necessarily a set of segments centered around zero.

Therefore, to the parameter $\lambda_C$ composed of nested segments, one adds a set of segments that will be either disjoint from the previous set (in particular if the parity of the rank of the general linear part and the classical group part differ) or nested with them. Let us define ${}^{L}\eta^*: {}^{L}\eta: {}^{L}\m \rightarrow {}^{L}\g$ induced by ${}^{L}\eta$. Once more, we need to understand if $x_{\phi_{\pi}}= {}^{L}\eta^*(x_{\phi_{\pi_M}}) = Px_{\phi_{\pi_M}}P^{-1}$ remains open whenever $\phi_{\pi_M}$ is, where $P$ denotes the permutation matrix used to reorganize $\lambda$ so that it is a decreasing sequence of (half)-integers. Since the segments composing $\lambda$ are nested, or completely disjoint, the observations made in the proof of Proposition \ref{discretethm} and Remark \ref{ranksrmk} still hold. When reorganizing $\lambda$ so that it is a decreasing sequence of (half)-integers, we simply \textit{add} multiplicities of $q$-exponents, and add the ranks of the $\Hom(E_{q^i}, E_{q^{i-1}})$'s (all equal to 1 before reorganization), as well as their compositions, so that the ranks remain maximal with respect to multiplicities, implying the openness of $\phi_\pi \in V_\lambda$. 
\end{proof}

\begin{proposition}
Let $\phi$ be a Langlands parameter of a classical split group $G$ or any of its pure inner forms, factoring through some subgroup ${}^{L}H$ of ${}^{L}G$, and with infinitesimal parameter $\lambda$. If $\phi$ is such that $x_{\phi|_{{}^{L}H}}$ lies in the open orbit of the Vogan variety $V_{\lambda|_{{}^{L}H}}$, then it lies in the open orbit in $V_{\lambda}$. 
Further, this implies that parabolic induction preserves the openness of a Langlands parameter. 
\end{proposition}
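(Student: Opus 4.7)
The plan is to adapt the block-diagonal argument used in the proofs of Propositions~\ref{discretethm} and \ref{temp} to this abstract setting. Writing $\phi = \iota \circ \phi_H$ for the embedding $\iota\colon {}^{L}H \hookrightarrow {}^{L}G$, the infinitesimal parameter $\lambda$ and the nilpotent element $x_\phi$ are both obtained from $\lambda_{\phi_H}$ and $x_{\phi_H}$ via $\iota$, and hence are block-diagonal in a basis adapted to $\iota$.

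First I would decompose $\lambda_{\phi_H}$ into the contributions from the simple factors of $\widehat{H}$, so that each $q^i$-eigenspace of $\lambda$ splits as $E_{q^i} = \bigoplus_k E^{(k)}_{q^i}$, and $x_\phi$ has matching blocks $x_{\phi_H^{(k)}}$. By the hypothesis of openness in $V_{\lambda|_{{}^{L}H}}$ and the rank description of the open orbit articulated in Remark~\ref{ranksrmk}, each block realizes maps $E^{(k)}_{q^i} \to E^{(k)}_{q^{i-1}}$ of maximal rank, and the iterated compositions of these maps are also of maximal rank.

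Next, conjugating by a permutation matrix $P$ that reorganizes the $q$-exponents of $\lambda$ into one decreasing sequence, I would appeal to Lemma~\ref{endo}: each block, being of degree $+1$ with respect to the $q$-grading, stays of degree $+1$ after conjugation, and its nonzero entries continue to sit in the $\Hom(E_{q^i},E_{q^{i-1}})$ factors (plus possibly a $\Sym^2$ factor in type~$C$) that describe $V_\lambda$ in the convenient form of \cite{benesh}*{Proposition 3.21}. The ranks of the induced maps $E_{q^i} \to E_{q^{i-1}}$ then add across blocks to $\sum_k \dim E^{(k)}_{q^{i-1}} = \dim E_{q^{i-1}}$, which is maximal; the same applies to iterated compositions. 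Reinvoking the openness criterion in Remark~\ref{ranksrmk}, this shows $x_\phi$ lies in the open $H_\lambda$-orbit of $V_\lambda$.

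The parabolic-induction consequence is then immediate: if $\pi \subseteq I_P^G(\sigma)$ with $\sigma$ an irreducible representation of a Levi $M$, Desiderata~\ref{desiderata} gives $\phi_\pi = \iota \circ \phi_\sigma$ for the dual Levi embedding $\iota\colon {}^{L}M \hookrightarrow {}^{L}G$, and the first part applied with $H = M$ transfers openness of $\phi_\sigma$ in $V_{\lambda_\sigma}$ to openness of $\phi_\pi$ in $V_\lambda$. The hard part of the argument, exactly as in Propositions~\ref{discretethm} and \ref{temp}, is the combinatorial bookkeeping that the ranks add correctly under $P$: it is straightforward when the exponent segments contributed by different factors of $H$ are disjoint, but requires care when they overlap or nest, since only then does $P$ genuinely shuffle entries between distinct blocks.
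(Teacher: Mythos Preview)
Your proposal is correct and follows essentially the same approach as the paper: the paper's own proof is a two-line pointer to Lemma~\ref{endo} and the argument of Propositions~\ref{discretethm} and \ref{temp}, together with the Hom-space description of $V_\lambda$ for classical groups, and you have simply fleshed out those details (block decomposition along the factors of $\widehat{H}$, rank additivity under the permutation $P$, and the application to parabolic induction via Desiderata~\ref{desiderata}).
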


\begin{proof}
A direct consequence of the Lemma \ref{endo}, and following the argumentation of the proof of Propositions \ref{discretethm} and \ref{temp}. We use the definition of the Vogan variety in terms of product of homomorphisms spaces which has been defined, in \cite{CFMMX}, for classical groups only. 
\end{proof}

\begin{bibdiv}
\begin{biblist}

\bib{AAM}{article}{
   author={Adams, Jeffrey},
   author={Arancibia, Nicolás},
   author={Mezo, Paul},
   title={Equivalent definitions of $A$-packets for Real classical groups},
   Journal={Memoirs of the American Mathematical Society, to appear},
     note={\url{https://arxiv.org/abs/arXiv:2108.05788}},
}

\bib{ABV}{book}{
   author={Adams, Jeffrey},
   author={Barbasch, Dan},
   author={Vogan, David A., Jr.},
   title={The Langlands classification and irreducible characters for real reductive groups},
   series={Progress in Mathematics},
   volume={104},
   publisher={Birkh\"{a}user Boston, Inc., Boston, MA},
   date={1992},
   pages={xii+318},
   isbn={0-8176-3634-X},
   %review={\MR{1162533}},
   doi={10.1007/978-1-4612-0383-4},
}

\bib{abps}{article}{
 author={Aubert, Anne-Marie},
   author={Baum, Paul},
   author={Plymen, Roger},
   author={Solleveld, Maarten},
   title={Conjectures about $p$-adic groups and their noncommutative
   geometry},
   conference={
      title={Around Langlands correspondences},
   },
   book={
      series={Contemp. Math.},
      volume={691},
      publisher={Amer. Math. Soc., Providence, RI},
   },
   isbn={978-1-4704-3573-8},
   date={2017},
   pages={15--51},
   %review={\MR{3666049}},
}

\iffalse
\bib{Arthur:Conjectures}{article}{
   author={Arthur, James},
   title={Unipotent automorphic representations: conjectures},
   note={Orbites unipotentes et repr\'{e}sentations, II},
   journal={Ast\'{e}risque},
   number={171-172},
   date={1989},
   pages={13--71},
   issn={0303-1179},
   %review={\MR{1021499}},
}
\fi

\iffalse
\bib{Arthur:Character}{article}{
   author={Arthur, James},
   title={On local character relations},
   journal={Selecta Math. (N.S.)},
   volume={2},
   date={1996},
   number={4},
   pages={501--579},
   issn={1022-1824},
   %review={\MR{1443184}},
   doi={10.1007/PL00001383},
}
\fi

\bib{Aubert-Xu}{article}{
author={Aubert, Anne-Marie},
author={Xu, Yujie},
title={The explicit local Langlands correspondence for $G_2$},
year={2022},
note={preprint, arXiv:2208.12391}
}

\bib{Arthur:book}{book}{
   author={Arthur, James},
   title={The endoscopic classification of representations},
   series={American Mathematical Society Colloquium Publications},
   volume={61},
   note={Orthogonal and symplectic groups},
   publisher={American Mathematical Society, Providence, RI},
   date={2013},
   pages={xviii+590},
   isbn={978-0-8218-4990-3},
   %review={\MR{3135650}},
   doi={10.1090/coll/061},
}

\bib{BBD}{article}{
   author={Be{\u\i}ılinson, A. A. },
   author={Bernstein, J. },
   author={Deligne, P. },
   title = {Faisceaux pervers, Analysis and topologyon singular spaces, I},
   date = {1982},
   journal = {Astérisque}
}

\bib{benesh}{misc}{
author = {Benesh, J},
title= {Equivariant resolutions of singularities for orbits in generalized quiver varieties arising in the local Langlands program for p-adic groups},
year = {2022},
Note={Master Thesis, University of Lethbridge}
}

\bib{meli}{article}{
AUTHOR = {Bertoloni Meli, Alexander},
Author={Imai, Naoki},
Author={Youcis, Alex},
     TITLE = {The {J}acobson--{M}orozov {M}orphism for {L}anglands
              {P}arameters in the {R}elative {S}etting},
   JOURNAL = {Int. Math. Res. Not. IMRN},
      YEAR = {2024},
    NUMBER = {6},
     PAGES = {5100--5165},
      ISSN = {1073-7928,1687-0247},
       DOI = {10.1093/imrn/rnad217},
       URL = {https://doi.org/10.1093/imrn/rnad217},
}

\bib{Borel:Corvallis}{article}{
   author={Borel, A.},
   title={Automorphic $L$-functions},
   conference={
      title={Automorphic forms, representations and $L$-functions},
      address={Proc. Sympos. Pure Math., Oregon State Univ., Corvallis,
      Ore.},
      date={1977},
   },
   book={
      series={Proc. Sympos. Pure Math., XXXIII},
      publisher={Amer. Math. Soc., Providence, R.I.},
   },
   date={1979},
   pages={27--61},
   %review={\MR{546608}},
}

\bib{CS}{article}{AUTHOR = {Casselman, William},
Author={Shahidi, Freydoon},
     TITLE = {On irreducibility of standard modules for generic
              representations},
   JOURNAL = {Ann. Sci. \'{E}cole Norm. Sup. (4)},
    VOLUME = {31},
      YEAR = {1998},
    NUMBER = {4},
     PAGES = {561--589},
      ISSN = {0012-9593},
       DOI = {10.1016/S0012-9593(98)80107-9},
       URL = {https://doi.org/10.1016/S0012-9593(98)80107-9},}

\bib{CG}{book}{
   author={Chriss, Neil},
   author={Ginzburg, Victor},
   title={Representation theory and complex geometry},
   series={Modern Birkh\"{a}user Classics},
   note={Reprint of the 1997 edition},
   publisher={Birkh\"{a}user Boston, Ltd., Boston, MA},
   date={2010},
   pages={x+495},
   isbn={978-0-8176-4937-1},
   %review={\MR{2838836}},
   doi={10.1007/978-0-8176-4938-8},
}

\bib{CFMMX}{article}{
   author={Cunningham, Clifton L. R.},
   author={Fiori, Andrew},
   author={Moussaoui, Ahmed},
   author={Mracek, James},
   author={Xu, Bin},
   title={$A$-packets for $p$-adic groups by way of microlocal vanishing
   cycles of perverse sheaves, with examples},
   journal={Mem. Amer. Math. Soc.},
   volume={276},
   date={2022},
   number={1353},
   pages={ix+216},
   issn={0065-9266},
   isbn={978-1-4704-5117-2; 978-1-4704-7019-7},
   %review={\MR{4391878}},
   doi={10.1090/memo/1353},
}

\bib{CFK}{article}{
   author={Cunningham, Clifton},
   author={Fiori, Andrew},
   author={Kitt, Nicole},
   title={Appearance of the Kashiwara-Saito singularity in the
   representation theory of $p$-adic $\rm GL(16)$},
   journal={Pacific J. Math.},
   volume={321},
   date={2022},
   number={2},
   pages={239--282},
   issn={0030-8730},
%   %review={\MR{4562570}},
   doi={10.2140/pjm.2022.321.239},
}

\bib{CFZ:cubic}{article}{
   author={Cunningham, Clifton},
   author={Fiori, Andrew},
   author={Zhang, Qing},
  TITLE = {$A$-packets for {$G_2$} and perverse sheaves on cubics},
   JOURNAL = {Adv. Math.},
    VOLUME = {395},
      YEAR = {2022},
     PAGES = {Paper No. 108074, 74},
       DOI = {10.1016/j.aim.2021.108074},
       URL = {https://doi.org/10.1016/j.aim.2021.108074},
}

\bib{CFZ:unipotent}{article}{
   author={Cunningham, Clifton},
   author={Fiori, Andrew},
   author={Zhang, Qing},
   title={Toward the endoscopic classification of unipotent representations of p-adic $G_2$},
   %journal={},
   %volume={},
   %date={in preparation},
   %number={},
   %pages={},
   note={preprint, arXiv: 2101.04578 }
}

\bib{CR}{article}{
   author={Cunningham, Clifton},
   author={Ray, Mishty},
   title={Proof of Vogan's conjecture on $A$-packets: irreducible
   parameters of $p$-adic general linear groups},
   journal={Manuscripta Math.},
   volume={173},
   date={2024},
   number={3-4},
   pages={1073--1097},
   issn={0025-2611},
   %review={\MR{4704767}},
   doi={10.1007/s00229-023-01490-7},
}

\bib{CR2}{article}{
   author={Cunningham, Clifton},
   author={Ray, Mishty},
title={Proof of Vogan's conjecture on $A$-packets for $\textrm{GL}_n$ over $p$-adic fields},
%date={2023},
note={preprint, \href{http://arxiv.org/abs/2302.10300}{http://arxiv.org/abs/2302.10300}},
}

\bib{gicdijols}{article}{
 AUTHOR = {Dijols, Sarah},
     TITLE = {The generalized injectivity conjecture},
   JOURNAL = {Bull. Soc. Math. France},
    VOLUME = {150},
      YEAR = {2022},
    NUMBER = {2},
     PAGES = {251--345},
      ISSN = {0037-9484,2102-622X},
}

\bib{GI}{article}{
AUTHOR = {Gan, Wee Teck},
Author={Ichino, Atsushi},
     TITLE = {The {G}ross-{P}rasad conjecture and local theta
              correspondence},
   JOURNAL = {Invent. Math.},
    VOLUME = {206},
      YEAR = {2016},
    NUMBER = {3},
     PAGES = {705--799},
      ISSN = {0020-9910},
       DOI = {10.1007/s00222-016-0662-8},
       URL = {https://doi.org/10.1007/s00222-016-0662-8},}

\bib{GGP}{article}{
   author={Gan, Wee Teck},
   author={Gross, Benedict H.},
   author={Prasad, Dipendra},
   title={Symplectic local root numbers, central critical $L$ values, and
   restriction problems in the representation theory of classical groups},
   language={English, with English and French summaries},
   note={Sur les conjectures de Gross et Prasad. I},
   journal={Ast\'{e}risque},
   number={346},
   date={2012},
   pages={1--109},
   issn={0303-1179},
   isbn={978-2-85629-348-5},
   %review={\MR{3202556}},
}

\bib{Gan-Savin}{article}{
author={Gan, Wee Teck},
author={Savin, Gordan },
title={The local Langlands conjecture for $G_2$},
Journal={Forum of Mathematics, Pi},
date={2023},
Volume={11, e28},
pages={1-42},
note={ arXiv:2209.07346},
}

\bib{GP}{article}{
AUTHOR = {Gross, Benedict H.},
Author={ Prasad, Dipendra},
     TITLE = {On the decomposition of a representation of {${\rm SO}_n$}
              when restricted to {${\rm SO}_{n-1}$}},
   JOURNAL = {Canad. J. Math.},
    VOLUME = {44},
      YEAR = {1992},
    NUMBER = {5},
     PAGES = {974--1002},
      ISSN = {0008-414X},
       DOI = {10.4153/CJM-1992-060-8},
       URL = {https://doi.org/10.4153/CJM-1992-060-8},}
       
\bib{GR}{article}{
       AUTHOR = {Gross, Benedict H.},
       Author={ Reeder, Mark},
     TITLE = {Arithmetic invariants of discrete {L}anglands parameters},
   JOURNAL = {Duke Math. J.},
    VOLUME = {154},
      YEAR = {2010},
    NUMBER = {3},
     PAGES = {431--508},
      ISSN = {0012-7094},
       DOI = {10.1215/00127094-2010-043},
       URL = {https://doi.org/10.1215/00127094-2010-043}}

\iffalse
\bib{Haines}{article}{
   author={Haines, Thomas J.},
   title={The stable Bernstein center and test functions for Shimura
   varieties},
   conference={
      title={Automorphic forms and Galois representations. Vol. 2},
   },
   book={
      series={London Math. Soc. Lecture Note Ser.},
      volume={415},
      publisher={Cambridge Univ. Press, Cambridge},
   },
   date={2014},
   pages={118--186},
   %review={\MR{3444233}},
}
\fi

\bib{HLL}{article}{
author={Hazeltine, Alexander},
author={Liu, Baiying},
author={Lo, Chi-Heng},
title={On the intersection of local $A$-packets for
classical groups.},
year={2022},
note={preprint, \href{https://arxiv.org/abs/2201.10539}{arXiv:2201.10539}}
}

\bib{HLL-Shahidi}{article}{
author={Hazeltine, Alexander},
author={Liu, Baiying},
author={Lo, Chi-Heng},
title={On certain conjecture of Clozel and Shahidi},
note={preprint, \href{https://arxiv.org/abs/2404.05773}{arXiv:2404.05773}},
year={2024}
}

\iffalse
\bib{central character}{article}{
author={Hazeltine, Alexander},
author={Liu, Baiying},
author={Lo, Chi-Heng},
author={Shahidi, Freydoon},
title={On central characters of representations in local $A$-packets},
Note={Appendix to: Jiang's Conjecture on the wave front sets of local Arthur packets, by Baiying Liu and Freydoon Shahidi},
year={2023},
Journal={}
}
\fi

\bib{HLLZ}{article}{
author={Hazeltine, Alexander},
author={Liu, Baiying},
author={Lo, Chi-Heng},
author={Zhang, Qing},
title={The closure ordering conjecture on local 
$L$-parameters in local $A$-packets of classical groups},
note={preprint, \href{https://arxiv.org/abs/2209.03816}{arXiv:2209.03816}},
year={2022},
}

\bib{heiermann}{article}{
   author={Heiermann, Volker},
   title={D\'{e}composition spectrale et repr\'{e}sentations sp\'{e}ciales
   d'un groupe r\'{e}ductif $p$-adique},
   journal={J. Inst. Math. Jussieu},
   volume={3},
   date={2004},
   number={3},
   pages={327--395},
   issn={1474-7480},
   doi={10.1017/S1474748004000106},}

\bib{heiermannorbit}{article}{
  author  = {Heiermann, V.},
  title   = {Orbites unipotentes et p\^oles d'ordre maximal de la fonction $\mu$ d'{H}arish-{C}handra},
  journal = {Canad. J. Math.},
  year    = {2006},
  volume  = {58},
  pages   = {1203-1228},
}

\bib{opdamh}{article}{
 AUTHOR = {Heiermann, Volker},
author={ Opdam, Eric},
     TITLE = {On the tempered {$L$}-functions conjecture},
   JOURNAL = {Amer. J. Math.},
    VOLUME = {135},
      YEAR = {2013},
    NUMBER = {3},
     PAGES = {777--799},
      ISSN = {0002-9327},
       DOI = {10.1353/ajm.2013.0026},
       URL = {https://doi.org/10.1353/ajm.2013.0026},
}

\iffalse
\bib{HO: tempered}{article}{
AUTHOR = {Heiermann, Volker} 
author={ Opdam, Eric},
     TITLE = {On the tempered {$L$}-functions conjecture},
   JOURNAL = {Amer. J. Math.},
  FJOURNAL = {American Journal of Mathematics},
    VOLUME = {135},
      YEAR = {2013},
    NUMBER = {3},
     PAGES = {777--799},
      ISSN = {0002-9327},
   MRCLASS = {11F70 (22E50)},
  MRNUMBER = {3068402},
MRREVIEWER = {Dubravka Ban},
       DOI = {10.1353/ajm.2013.0026},
       URL = {https://doi.org/10.1353/ajm.2013.0026},
}
\fi

\bib{JL}{article}{AUTHOR = {Jantzen, Chris},
Author={ Liu, Baiying},
     TITLE = {The generic dual of {$p$}-adic split {$SO_{2n}$} and local
              {L}anglands parameters},
   JOURNAL = {Israel J. Math.},
    VOLUME = {204},
      YEAR = {2014},
    NUMBER = {1},
     PAGES = {199--260},
      ISSN = {0021-2172},
       DOI = {10.1007/s11856-014-1091-2},
       URL = {https://doi.org/10.1007/s11856-014-1091-2},}

\bib{Jiang-Soudry}{article}{
Author={  Jiang, Dihua},
Author={Soudry, David},
 title={Generic representations and local Langlands reciprocity law for p-adic $\SO_{2n+1}$},
 BOOKTITLE = {Contributions to automorphic forms, geometry, and number
              theory},
     PAGES = {457--519},
 PUBLISHER = {Johns Hopkins Univ. Press, Baltimore, MD},
      YEAR = {2004},
}

\bib{Kaletha}{article}{
AUTHOR = {Kaletha, Tasho},
     TITLE = {Genericity and contragredience in the local {L}anglands
              correspondence},
   JOURNAL = {Algebra Number Theory},
    VOLUME = {7},
      YEAR = {2013},
    NUMBER = {10},
     PAGES = {2447--2474},
      ISSN = {1937-0652},
       DOI = {10.2140/ant.2013.7.2447},
       URL = {https://doi.org/10.2140/ant.2013.7.2447},}

\bib{KMSW:Unitary}{article}{
	author = {Kaletha, Tasho},
	author={Minguez, Alberto}, 
	author={Shin, Sug Woo},
	author={White, Paul-James},
	note= {\url{https://arxiv.org/abs/1409.3731}},
	title = {Endoscopic Classification of Representations: Inner Forms of Unitary Groups},
	date = {2014}
}
\iffalse
\bib{KS}{book}{
   author={Kashiwara, Masaki},
   author={Schapira, Pierre},
   title={Sheaves on manifolds},
   series={Grundlehren der Mathematischen Wissenschaften [Fundamental
   Principles of Mathematical Sciences]},
   volume={292},
   note={With a chapter in French by Christian Houzel;
   Corrected reprint of the 1990 original},
   publisher={Springer-Verlag, Berlin},
   date={1994},
   pages={x+512},
   isbn={3-540-51861-4},
   %review={\MR{1299726}},
}

\bib{Kazhdan-Lusztig}{article}{
author={Kazhdan, David},
author={Lusztig, George},
title={Proof of the Deligne-Langlands conjecture for Hecke algebras},
journal={Invent. Math.},
volume={87},
date={1987},
pages={153--215},
}
\fi

\bib{Konno}{article}{
 AUTHOR = {Konno, Takuya},
     TITLE = {Twisted endoscopy and the generic packet conjecture},
   JOURNAL = {Israel J. Math.},
  %FJOURNAL = {Israel Journal of Mathematics},
    VOLUME = {129},
      YEAR = {2002},
     PAGES = {253--289},
      ISSN = {0021-2172},
   %MRCLASS = {22E35 (22E50)},
 % MRNUMBER = {1910946},
%MRREVIEWER = {Christian R. Kaiser},
       DOI = {10.1007/BF02773167},
       URL = {https://doi.org/10.1007/BF02773167},}

\bib{Liu}{article}{AUTHOR = {Liu, Baiying},
     TITLE = {Genericity of representations of {$p$}-adic {${\rm Sp}_{2n}$}
              and local {L}anglands parameters},
   JOURNAL = {Canad. J. Math.},
  %FJOURNAL = {Canadian Journal of Mathematics. Journal Canadien de Math\'{e}matiques},
    VOLUME = {63},
      YEAR = {2011},
    NUMBER = {5},
     PAGES = {1107--1136},
      ISSN = {0008-414X},
   %MRCLASS = {22E50 (11S37)},
  %MRNUMBER = {2865733},
%MRREVIEWER = {Gergely Z\'{a}br\'{a}di},
       DOI = {10.4153/CJM-2011-017-2},
       URL = {https://doi.org/10.4153/CJM-2011-017-2},}

\bib{Liu-Shahidi}{article}{
author={Liu, Baiying},
author={Shahidi, Freydoon},
title={ Jiang's Conjecture on the wavefront sets of local Arthur packets},
note={With an appendix \textit{On central characters of representations in local A-packets} by Hazeltine, Lo and Liu, preprint},
year={2022},
}

\iffalse
\bib{Lusztig:Intersectioncohomology}{article}{
 author={Lusztig, George},
title={Intersection cohomology complexes on a reductive group},
journal={Invert. Math.},
volume={75},
date={1984},
pages={205--272},
}
\fi

\bib{Lusztig:Cuspidal2}{incollection}{
	Author = {Lusztig, George},
	Booktitle = {Representations of groups ({B}anff, {AB}, 1994)},
%	Date-Added = {2018-03-31 19:23:16 +0000},
%	Date-Modified = {2018-03-31 21:01:34 +0000},
%	Mrclass = {22E50 (20G05)},
%	Mrnumber = {1357201},
%	Mrreviewer = {Bhama Srinivasan},
	Note = {With errata for Part I [Inst. Hautes \'Etudes Sci. Publ. Math. No. 67 (1988), 145--202},
	Pages = {217--275},
	Publisher = {Amer. Math. Soc., Providence, RI},
	Series = {CMS Conf. Proc.},
	Title = {Cuspidal local systems and graded {H}ecke algebras. {II}},
	Volume = {16},
	Year = {1995}}

\iffalse
\bib{Lusztig:Classification1}{article}{
   author={Lusztig, George},
   title={Classification of unipotent representations of simple $p$-adic
   groups},
   journal={Internat. Math. Res. Notices},
   date={1995},
   number={11},
   pages={517--589},
   issn={1073-7928},
   %review={\MR{1369407}},
   doi={10.1155/S1073792895000353},
}

\bib{Lusztig:Classification2}{article}{
   author={Lusztig, George},
   title={Classification of unipotent representations of simple $p$-adic
   groups. II},
   journal={Represent. Theory},
   volume={6},
   date={2002},
   pages={243--289},
   %review={\MR{1927955}},
  doi={10.1090/S1088-4165-02-00173-5},
}
\fi

\bib{MM}{article}{
   author={Mishra, Manish},
   title={Generic Representations in L-packets},
   journal={International Journal of Number Theory},
   volume={12},
   number={6},
   date={2016}, 
   pages={1613-1624}
}
		
\bib{Moeglin}{article}{
   author={M\oe glin, Colette},
   title={Comparaison des param\`etres de Langlands et des exposants \`a
   l'int\'{e}rieur d'un paquet d'Arthur},
%   language={French, with English summary},
   journal={J. Lie Theory},
   volume={19},
   date={2009},
   number={4},
   pages={797--840},
   issn={0949-5932},
   %review={\MR{2599005}},
}

\bib{MT}{article}{
    AUTHOR = {M\oe glin, Colette},
    author={Tadi\'{c}, Marko},
     TITLE = {Construction of discrete series for classical {$p$}-adic
              groups},
   JOURNAL = {J. Amer. Math. Soc.},
    VOLUME = {15},
      YEAR = {2002},
    NUMBER = {3},
     PAGES = {715--786},
      ISSN = {0894-0347,1088-6834},
       DOI = {10.1090/S0894-0347-02-00389-2},
       URL = {https://doi.org/10.1090/S0894-0347-02-00389-2},

}

\bib{Mok:Unitary}{article}{
   author={Mok, Chung Pang},
   title={Endoscopic classification of representations of quasi-split
   unitary groups},
   journal={Mem. Amer. Math. Soc.},
   volume={235},
   date={2015},
   number={1108},
   pages={vi+248},
   issn={0065-9266},
   isbn={978-1-4704-1041-4},
   isbn={978-1-4704-2226-4},
   %%review={\MR{3338302}},
   doi={10.1090/memo/1108},
}

 \iffalse   
\bib{Moussaoui:Bernstein}{article}{
   author={Moussaoui, Ahmed},
   title={Centre de Bernstein dual pour les groupes classiques},
%   language={French, with English and French summaries},
   journal={Represent. Theory},
   volume={21},
   date={2017},
   pages={172--246},
   %review={\MR{3694312}},
   doi={10.1090/ert/503},
}
\fi

\bib{Reeder}{article}{
   author={Reeder, Mark},
   title={On the Iwahori-spherical discrete series for $p$-adic Chevalley
   groups; formal degrees and $L$-packets},
   journal={Ann. Sci. \'{E}cole Norm. Sup. (4)},
   volume={27},
   date={1994},
   number={4},
   pages={463--491},
%   issn={0012-9593},
%   %review={\MR{1290396}},
}

\bib{Reedergeneric}{article}{
author={Reeder, Mark},
title={Whittaker models and unipotent representations of $p$-adic groups},
journal={Math. Ann.},
volume={308},
date={1997},
pages={587-592}
}

\bib{Renard}{book}{
   author={Renard, David},
   title={Repr\'{e}sentations des groupes r\'{e}ductifs $p$-adiques},
%   language={French},
   series={Cours Sp\'{e}cialis\'{e}s},% [Specialized Courses]},
   volume={17},
   publisher={Soci\'{e}t\'{e} Math\'{e}matique de France, Paris},
   date={2010},
   pages={vi+332},
   isbn={978-2-85629-278-5},
   %review={\MR{2567785}},
}

\bib{Shahidi: plancherel}{article}{AUTHOR = {Shahidi, Freydoon},
     TITLE = {A proof of {L}anglands' conjecture on {P}lancherel measures;
              complementary series for {$p$}-adic groups},
   JOURNAL = {Ann. of Math. (2)},
 % FJOURNAL = {Annals of Mathematics. Second Series},
    VOLUME = {132},
      YEAR = {1990},
    NUMBER = {2},
     PAGES = {273--330},
      ISSN = {0003-486X},
  % MRCLASS = {11R39 (11F70 11S37 22E35 22E55)},
  %MRNUMBER = {1070599},
%MRREVIEWER = {Stephen Gelbart},
       DOI = {10.2307/1971524},
       URL = {https://doi.org/10.2307/1971524},
}

\bib{silberger}{article}{
  author = {Silberger, Allan J.},
  title = {Discrete Series and Classification for p-Adic Groups I},
  journal = {American Journal of Mathematics},
  ISSN = {00029327, 10806377},
  URL = {http://www.jstor.org/stable/2374232},
  number = {6},
  pages = {1241--1321},
  publisher = {Johns Hopkins University Press},
  volume = {103},
  year = {1981},
}

\iffalse		
\bib{Solleveld}{article}{
author={Solleveld, Maarten},
title={A local Langlands correspondence for unipotent representations},
journal={American Journal of Mathematics},
date={2023},
volume={145},
pages={673-719}
}
\fi

\bib{Solleveld:pKLH}{article}{
author={Solleveld, Maarten},
title={Graded Hecke algebras, constructible sheaves and the p-adic Kazhdan--Lusztig conjecture},
note={\href{https://arxiv.org/pdf/2106.03196.pdf}{https://arxiv.org/pdf/2106.03196.pdf}},
date={2022}
}

\bib{Solleveld-open}{article}{
author={Solleveld, Maarten},
title={On submodules of standard modules},
note={preprint, \href{https://arxiv.org/abs/2309.10401}{arXiv: 2309.10401}},
year={2023}
}

\bib{Vogan:Langlands}{article}{
   author={Vogan, David A., Jr.},
   title={The local Langlands conjecture},
   conference={
      title={Representation theory of groups and algebras},
   },
   book={
      series={Contemp. Math.},
      volume={145},
      publisher={Amer. Math. Soc., Providence, RI},
   },
   date={1993},
   pages={305--379},
   %review={\MR{1216197}},
   doi={10.1090/conm/145/1216197},
}

\bib{wald}{article}{
   author={Waldspurger, J.-L.},
   title={La formule de Plancherel pour les groupes $p$-adiques (d'apr\`es
   Harish-Chandra)},
   journal={J. Inst. Math. Jussieu},
   volume={2},
   date={2003},
   number={2},
   pages={235--333},
   issn={1474-7480},
 }

\bib{Xu}{article}{
author={Xu, Bin},
title={ $A$-packets for quasisplit $\mathrm{GSp}(2n)$ and $\mathrm{GO}(2n)$ over a $p$-adic field.},
year={2021},
note={preprint, arXiv:2111.07591}}

\bib{Zelevinskii:KL}{article}{
   author={Zelevinski\u{\i}, A. V.},
   title={The $p$-adic analogue of the Kazhdan-Lusztig conjecture},
   language={Russian},
   journal={Funktsional. Anal. i Prilozhen.},
   volume={15},
   date={1981},
   number={2},
   pages={9--21, 96},
   issn={0374-1990},
   %review={\MR{617466}},
}

\end{biblist}
\end{bibdiv}

\end{document}